\def\mtline#1{\hbox to#1{\hrulefill}}
\def\what{\widehat}
\def\maB{{\mathcal B}}
\def\maE{{\mathcal E}}
\def\maG{{\mathcal G}}
\def\maK{{\mathcal K}}
\def\maL{{\mathcal L}}
\def\maM{{\mathcal M}}
\def\maP{{\mathcal P}}
\def\maR{{\mathcal R}}
\def\maS{{\mathcal S}}
\def\maU{{\mathcal U}}
\def\C{{\mathbb C}}
\def\N{{\mathbb N}}
\def\R{{\mathbb R}}
\def\Z{{\mathbb Z}}
\let\epr=\endproclaim
\definecolor{purple}{cmyk}{.33,1,0,.4}
\definecolor{m}{rgb}{1,0.1,1}
\definecolor{violet}{cmyk}{1,0,1,0}
\definecolor{test}{rgb}{1,1,1}
\definecolor{cmyk}{cmyk}{0,1,1,0}
\newtheorem{Equation}{}[section]
\newtheorem{conjecture}[Equation]{Conjecture}
\newtheorem{corollary}[Equation]{Corollary}
\newtheorem{definition}[Equation]{Definition}
\newtheorem{example}[Equation]{Example}
\newtheorem{lemma}[Equation]{Lemma}
\newtheorem{proposition}[Equation]{Proposition}
\newtheorem{remark}[Equation]{Remark}
\newtheorem{theorem}[Equation]{Theorem}
\newcommand\dirac{\partialslash}
\def\dim{\operatorname{dim}}
\def\Id{\operatorname{I}}
\def\id{\operatorname{id}}
\def\Ker{\operatorname{Ker}}
\def\Supp{\operatorname{Supp}}
\def\ep{\epsilon}
\def\bpr{\begin{proof}}
\def\epr{\end{proof}}
\def\pa{\partial}
\def\ms{\medskip}
\def\what{\widehat}
\def\Ad{\operatorname{Ad}}
\def\Id{\operatorname{Id}}
\def\id{\operatorname{id}}
\def\Ker{\operatorname{Ker}}
\def\Supp{\operatorname{Supp}}
\def\Sp{\operatorname{Sp}}
\def\1{{\mathbb 1}}
\def\C{\mathbb C}
\def\N{\mathbb N}
\def\R{\mathbb R}
\def\Z{\mathbb Z}
\def\maL{\mathbb L}
\def\maB{{\mathcal B}}
\def\maE{{\mathcal E}}
\def\maL{{\mathcal L}}
\def\maM{{\mathcal M}}
\def\maG{{\mathcal G}}
\def\maK{{\mathcal K}}
\def\maP{{\mathcal P}}
\def\maR{{\mathcal R}}
\def\maS{{\mathcal S}}
\def\maU{{\mathcal U}}
\def\what{\widehat}
\def\tE{\widetilde{E}}
\def\tM{\widetilde{M}}
\def\pa{\partial}
\def\pphi{\varphi}
\begin{document}



\title
[Admissible HR sequence for groupoids]{Admissible Higson-Roe sequences\\ for transformation groupoids\\ \today}


\author[M.-T. Benameur]{Moulay Tahar Benameur}
\address{Institut Montpellierain Alexander Grothendieck, UMR 5149 du CNRS, Universit\'e de Montpellier}
\email{moulay.benameur@umontpellier.fr}

\author[V. Moulard]{Victor Moulard}
\address{Institut Montpellierain Alexander Grothendieck, UMR 5149 du CNRS, Universit\'e de Montpellier} 
\email{victor.moulard@umontpellier.fr}

\thanks{MSC (2010) 53C12, 57R30, 53C27, 32Q10, 19K33, 19K35, 19K56, 58B34. \\
Key words: $K$-theory of $C^*$-algebras, Actions of discrete groups, foliation, positive scalar curvature, Baum-Connes conjecture, Higson-Roe  sequence.}

%
\maketitle

%
%
%


%
%

%
\begin{abstract} Given a finitely generated discrete group $\Gamma$, we construct for any admissible crossed product completion and for any metrizable finite dimensional compact $\Gamma$-space $X$, a universal Higson-Roe six-term exact sequence for the transformation groupoid $X\rtimes \Gamma$. In particular, we generalize the maximal Higson-Roe sequence of \cite{HigsonRoe2008} to such groupoids. In the case where the groupoid $X\rtimes \Gamma$  satisfies the rectified Baum-Connes conjecture of \cite{BaumGuentnerWillett}, this yields some rigidity consequences. 
 \end{abstract}

\medskip

%



\section*{Introduction}

Given a smooth closed odd dimensional manifold $M$, Atiyah, Patodi and Singer  \cite{APS3} associated with any generalized Dirac operator $D$ on $M$ and any  finite dimensional unitary representation $\sigma$ of  the fundamental group $\Gamma=\pi_1(M)$, a real  invariant $\rho (D, \sigma)$  called since then the {\em APS rho-invariant}. This  is essentially the difference between the eta invariant associated with the twisted operator $D\otimes \sigma$ and $\dim (\sigma)$ times the eta invariant of $D$. Given  more generally any Galois covering over $M$ with group $\Gamma$, Cheeger and Gromov  \cite{CheegerGromov} introduced an $\ell^2$ version of the eta invariant, as well as an $\ell^2$  rho-invariant. They replaced the finite dimensional representation $\sigma$ by the  regular representation in the Hilbert space $\ell^2(\Gamma)$ so that  whenever the covering happens to be profinite, their $\ell^2$ eta invariant  is the expected  limit of APS invariants. In general, the Cheeger-Gromov  $\ell^2$ eta invariant is defined   using the Atiyah trace on the group von Neumann algebra \cite{AtiyahCovering} and the resulting $\ell^2$ rho invariant, also called the Cheeger-Gromov (or CG) rho invariant and denoted $\rho_{(2)} (D)$, is then the difference between the $\ell^2$ eta invariant and the APS eta invariant of $D$.

The first important observation is that when  $D$ is the signature operator associated with a riemannian metric  on $M$, the APS rho invariant and the CG rho invariant do not depend on such metric  \cite{APS3, CheegerGromov} and are  differential invariants.  In \cite{Neumann} Neumann   proved  that the APS rho invariant is even an oriented homotopy invariant when  the representation $\sigma$ factors through some free abelian group. Some years later, Weinberger  \cite{Weinberger88} proved the same homotopy invariance for a large class of torsion-free groups and conjectured that the APS rho invariant for the signature operator should be an oriented  homotopy invariant for all torsion-free finitely generated countable discrete groups. Mathai \cite{Mathai} proved similarly  the oriented homotopy invariance of the $\ell^2$ rho invariant   for torsion-free Bieberbach groups, and computed it for some locally symmetric spaces. He also  conjectured  that the Cheeger-Gromov rho invariant for the signature operator should also be  an oriented homotopy invariant for any torsion-free discrete countable group. So, in this torsion-free case the rho-invariants seem to behave like signatures. On the other hand, when the group has torsion then  the APS invariants as well as the CG invariant have  totally different behaviour. They were for instance used to distinguish homotopy invariant manifolds. The CG invariant was  used to prove the existence, for some manifolds $M$ whose fundamental group has torsion,  of infinitely many differential manifolds which are oriented homotopy equivalent  to $M$ \cite{ChangWeinberger}. 

Other important properties of the APS and CG rho-invariants are used in the study of metrics of Positive Scalar Curvature (PSC).  Indeed, on a given spin manifold which admits a metric $g$ of PSC, the rho invariants for the spin-Dirac operator $D=\dirac_g$ only depend on the PSC-concordance class of $g$, and hence provide invariants of the space of path-connected components of the space of metrics of PSC on $M$, or rather of its quotient under diffeomorphisms \cite{Gilkey, PiazzaSchick}. Here again, there is a complete dichotomy between the torsion-free case and the non torsion-free case. So, the rho invariants seem to behave like the index of the corresponding Dirac operator as far as the group is torsion-free, i.e.  to vanish when the metric has PSC \cite{Keswani1, Keswani2, PiazzaSchick}. But when the group has torsion, they provide in contrast   interesting non-trivial invariants \cite{Gilkey}. More recent important contributions in this direction can be found in the interesting paper \cite{WeinbergerYu}. Other results were obtained for topological manifolds  in   \cite{ZenobiTop}, see also \cite{WeinbergerXiYu}.

In the late 90's, the rigidity of the rho invariants for torsion-free groups  was approached  using $K$-theory of $C^*$-algebras in the phD thesis of  Keswani  \cite{Keswani1, Keswani2}. More precisely, he related these questions to the maximal Baum-Connes assembly map and he proved the corresponding rigidity results under the further condition that the maximal Baum-Connes assembly map for $\Gamma$ is an isomorphism. 
%
%
In \cite{HigsonRoe2008}, the Keswani theorems were clarified by Higson and Roe by  using  their universal analytic structure group  $\maS_1(\Gamma)$ from their seminal three papers on ``mapping surgery to analysis'' \cite{HigsonRoeAnalysis1, HigsonRoeAnalysis2, HigsonRoeAnalysis3}. More precisely,  Higson and Roe extended their universal analytic exact sequence to include maximal completions and  gave a  conceptual explanation of the rigidity results of the APS rho invariants in Keswani's theorems \cite{HigsonRoe2008}. Recall that the maximal Higson-Roe  universal sequence can be written, when  the group $\Gamma$ is for instance torsion-free, as follows
 $$\xymatrix{
K_{0} (B\Gamma) \ar@{->}[r]^{\mu_{0, \Gamma}\hspace{0,3cm}}\ar@{<-}[d] &
K_0(C^*(\Gamma))\ar@{->}[r]&
\maS_1(\Gamma)\ar@{->}[d]\\
\maS_0(\Gamma)\ar@{<-}[r]&
K_1(C^*(\Gamma))\ar@{<-}[r]^{\hspace{0,3cm}\mu_{1, \Gamma}}&
K_{1} (B\Gamma) 
}$$
where $\maS_1(\Gamma)$ is an appropriately modified universal structure group associated with $\Gamma$ which is  constructed  using maximal completions. Here $K_{*} (B\Gamma)$ is as usual the compactly supported  $K$-homology group of the classifying space $B\Gamma$ and  the map  $\mu_{i,\Gamma}:  K_{i} (B\Gamma)\stackrel{\mu_\Gamma}{\rightarrow} K_i(C^*(\Gamma))$ is the the maximal Baum-Connes assembly map, where $K_*(C^*(\Gamma))$ is the $K$-theory groups of the full $C^*$-algebra of $\Gamma$. Once this maximal sequence is carried out, they  achieved their proof of the Keswani rigidity theorems by constructing  a group morphism $\maS_1(\Gamma)\to \R$ associated with any finite dimensional unitary representation $\sigma$ of $\Gamma$,  which allowed them to recover  the main  APS rho invariants associated with $\sigma$ as elements of the range of  this morphism. Keswani's theorem hence became a ``Lapalissade'': whenever $\maS_1(\Gamma)$ is trivial,  its image under the group morphism  $\maS_1(\Gamma)\to \R$ is  trivial.  
In \cite{BenameurRoyJFA}, the Cheeger-Gromov rho invariant was also encompassed by a similar approach,  now constructing a  morphism $\maS_1(\Gamma)\to \R$ associated with the regular representation, and using von Neumann algebras.  Again, the main $\ell^2$ rho invariants were proved to belong to the range of this group morphism, and hence allowed again to deduce the  rigidity results for the CG invariants, under the same hypothesis on the maximal Baum-Connes assembly map, see again \cite{BenameurRoyJFA}. \\

The hypothesis on the maximal Baum-Connes assembly map  excludes  many interesting classes of groups having Kazhdan's property (T), and for which the (reduced) Baum-Connes conjecture is know to be true  \cite{Lafforgue}.   Taking for instance $\Gamma$ to be the fundamental group of a closed hyperbolic $3$-manifold or any lattice in $\Sp(n,1)$,  the above $K$-theory approach doesn't apply, see \cite{Lafforgue} or \cite{Julg}. So, the rigidity conjectures  for such groups (Weinberger and Mathai) look unreachable by  this method. 
These observations motivated the intensive study of transformation groupoids in  \cite{BenameurRoyI, BenameurRoyII, BenameurRoyPPV}. More precisely,  given that the group $\Gamma$ usually admits interesting actions  on compact spaces, which do satisfy the maximal Baum-Connes conjecture even if $\Gamma$ doesn't, it was necessary to extend the construction of the Higson-Roe sequence and its consequences, so as to encompass such transformation groupoids.  Zimmer-amenable actions on finite dimensional metrizable compact spaces give new examples, e.g. amenable actions on smooth compact manifolds, in this case the maximal Baum-Connes map is an isomorphism by the Higson-Kasparov theorem \cite{HigsonKasparov}, and the results obtained in \cite{BenameurRoyII}   can already be applied to yield new proofs of the rigidity results obtained in \cite{BenameurPiazza}. On the other hand, stricking examples have been discovered in the last two decades where such   $\Gamma$-actions on  compact spaces $X$ do  satisfy the maximal Baum-Connes conjecture without satisfying the  reduced Baum-Connes conjecture, see for instance \cite{HigsonLafforgueSkandalis}[Remark 12] and  also \cite{Osajda}. These examples  motivated the modification of the RHS of the Baum-Connes assembly map so as to impose exactness of the chosen crossed product completion and to state a rectified Baum-Connes conjecture in \cite{BaumGuentnerWillett}.\\

The goal of the present paper is to provide  the  universal analytic six-term  sequence for  transformation groupoids like $X\rtimes \Gamma$  which is valid for  any admissible crossed-product completion, with the constraint of involving  the corresponding  Baum-Connes assembly map. In particular, our results apply to the  minimal exact and Morita compatible completion considered  in \cite{BaumGuentnerWillett} and hence involving the rectified Baum-Connes assembly map defined there, but we also encompass    the maximal crossed-product completion which thus involves the maximal assembly map and extends the results of \cite{HigsonRoe2008}. 

Let us now describe  in more details  our results.  
We first need  to define dual algebras corresponding to admissible completions,  keeping in mind that the resulting universal sequence should  involve the corresponding Baum-Connes assembly map. An admissible completion will be for us any crossed product functor $\rtimes$ in the sense of \cite{BaumGuentnerWillett}[Definition 2.1] which is Morita compatible. More precisely, for any separable $\Gamma$-algebra $A$, $A\rtimes\Gamma$ contains $A\rtimes_{alg}\Gamma$ as a dense subalgebra, and  there are natural transformations 
$$
A\rtimes_{\max}\Gamma \to A\rtimes \Gamma \to A\rtimes_{r}\Gamma.
$$
which restrict to the identity on $A\rtimes_{alg}\Gamma$. Moreover, we assume for simplicity the Morita compatibility condition in the whole paper, although many results are obviously true without such assumption. However, an admissible crossed product need not to be exact a priori, note though that the exactness property is necessary for the Baum-Connes assembly map to be an isomorphism \cite{HigsonLafforgueSkandalis}. Since all the Baum-Connes assembly maps share the same LHS, namely  $\maR K^0(\maG)$ (only the RHS uses different completions),  our definition of the dual Roe algebras and their ideals needs to keep the same $K$-groups for the quotient dual algebra, exactly as in \cite{HigsonRoe2008} for the maximal completion and for groups. Following \cite{HigsonRoe2008}, we use the concept of lifts of operators, and our dual algebras will be composed of lifts of operators from the above reduced algebras. This is achieved using a generalized Connes-Skandalis Hilbert module which is obtained as the range of the so-called Mishchenko projection. More precisely, given a locally compact metric $\Gamma$-space $Z$ which is proper and cocompact, and a faithful non-degenerate $\Gamma$-equivariant $C(X)$-representation $\pi$ of $C_0(Z\times X)$ in $C(X)\otimes H$ amplified using $\what{H}=H\otimes \ell^2(\Gamma)^\infty$, and working with a fixed  admissible crossed product functor $\rtimes$ in the sense of \cite{BaumGuentnerWillett}, we consider the Hilbert $C(X)\rtimes\Gamma$-module  $[C(X)\otimes\what{H}]\rtimes\Gamma$, and hence its Hilbert submodule $\maE_Z^{\what{H}}$ obtained as the range of a Mishchenko projection associated with some compactly supported continuous cut-off function on $Z\times X$.  This latter module is called the generalized Connes-Skandalis Hilbert module,  it coincides in the simplest situations with the Hilbert module defined for foliations in \cite{ConnesSkandalis}, see also \cite{BenameurPiazza}.

The generalized Connes-Skandalis Hilbert module has another description as a composition Hilbert module, therefore we  define the dual Roe algebra as some $C^*$-subalgebra $D^*_\Gamma (\maE_Z^{\what{H}})$ of the $C^*$-algebra 
of adjointable operators in the generalized Connes-Skandalis Hilbert module $\maE_Z^{\what{H}}$. The Roe ideal $C^*_\Gamma (\maE_Z^{\what{H}})$ is also  defined similarly, and we end up  with a short exact sequence of Roe algebras:
$$
0\to C^*_\Gamma (\maE_Z^{\what{H}}) \hookrightarrow D^*_\Gamma (\maE_Z^{\what{H}}) \rightarrow Q^*_\Gamma (\maE_Z^{\what{H}})\to 0.
$$
The first observation is  that the ideal $C^*_\Gamma (\maE_Z^{\what{H}})$ is isomorphic to  the  ideal of all compact operators on $\maE_Z^{\what{H}}$. Therefore, $C^*_\Gamma (\maE_Z^{\what{H}})$ is Morita equivalent to the $C^*$-algebra $C^*(\maG)=C(X)\rtimes\Gamma$  corresponding to the fixed admissible crossed product. Next we prove  that there is a well defined $C^*$-algebra morphism 
$$
Q^*_\Gamma (X; Z, \what{H})\longrightarrow Q^*_\Gamma (\maE_Z^{\what{H}}),
$$
which turns out to be an isomorphism. Here $Q^*_\Gamma (X; Z, \what{H})$ is the Calkin $L^2$ Roe algebra corresponding to the reduced completion, and whose $K$-theory was computed in \cite{BenameurRoyII}. Therefore, the isomorphism class of the $C^*$-algebra $Q^*_\Gamma (\maE_Z^{\what{H}})$ does not depend on the completion and gives back the LHS of the Baum-Connes map  as allowed. Finally, we use the equivariant version of the Pimsner-Popa-Voiculescu theorem \cite{PPV1, PPV2} as stated in \cite{BenameurRoyPPV} to prove  that the $K$-theory groups of all these dual $C^*$-algebras do not depend on the choice of the above non-degenerate $\Gamma$-equivariant representation $\pi$ of $C_0(Z\times X)$. Our first theorem can then be stated as follows:
\begin{theorem}
For any metric proper cocompact  $\Gamma$-space $Z$, there exists a  periodic  six-term exact sequence:
$$\xymatrix{
KK^0_{\Gamma} (Z, X) \ar@{->}[r]^{\mu^\maG_{0, Z}\hspace{0,3cm}}\ar@{<-}[d] &
K_0(C(X)\rtimes\Gamma)\ar@{->}[r]&
K_0(D^*_\Gamma (\maE_Z^{\what{H}}))\ar@{->}[d]\\
K_1(D^*_\Gamma (\maE_Z^{\what{H}}))\ar@{<-}[r]&
K_1(C(X)\rtimes\Gamma)\ar@{<-}[r]^{\hspace{0,3cm}\mu^\maG_{1, Z}}&
KK^1_{\Gamma} (Z, X) 
}$$
where $\mu^\maG_{i, Z}$ is the Baum-Connes index map associated with the proper cocompact $\Gamma$-space $Z$ with coefficients in the $\Gamma$-algebra $C(X)$, defined using the chosen admissible completion. 
\end{theorem}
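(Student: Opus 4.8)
The plan is to read off the asserted hexagon from the periodic six-term exact sequence in $K$-theory attached to the extension of Roe algebras
\[
0\to C^*_\Gamma (\maE_Z^{\what{H}}) \hookrightarrow D^*_\Gamma (\maE_Z^{\what{H}}) \longrightarrow Q^*_\Gamma (\maE_Z^{\what{H}})\to 0
\]
constructed above, after substituting the three $K$-theory identifications already prepared and after recognizing the connecting homomorphism as the Baum--Connes index map. So first I would apply the six-term exact sequence to this extension, producing a cyclic exact sequence relating $K_*\big(C^*_\Gamma(\maE_Z^{\what H})\big)$, $K_*\big(D^*_\Gamma(\maE_Z^{\what H})\big)$ and $K_*\big(Q^*_\Gamma(\maE_Z^{\what H})\big)$, with connecting maps $\partial_i\colon K_i\big(Q^*_\Gamma(\maE_Z^{\what H})\big)\to K_{i-1}\big(C^*_\Gamma(\maE_Z^{\what H})\big)$.

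Next I would plug in the identifications. For the ideal: since $C^*_\Gamma(\maE_Z^{\what H})$ is the algebra of compact operators on the generalized Connes--Skandalis module $\maE_Z^{\what H}$, it is Morita equivalent to $C^*(\maG)=C(X)\rtimes\Gamma$, which yields a natural isomorphism $K_i\big(C^*_\Gamma(\maE_Z^{\what H})\big)\cong K_i\big(C(X)\rtimes\Gamma\big)$; its independence of the auxiliary non-degenerate $\Gamma$-equivariant representation $\pi$ is exactly the content of the equivariant Pimsner--Popa--Voiculescu statement recalled above. For the quotient: composing the $C^*$-isomorphism $Q^*_\Gamma(X;Z,\what H)\xrightarrow{\ \sim\ }Q^*_\Gamma(\maE_Z^{\what H})$ with the computation of $K_*\big(Q^*_\Gamma(X;Z,\what H)\big)$ from \cite{BenameurRoyII} gives $K_i\big(Q^*_\Gamma(\maE_Z^{\what H})\big)\cong KK^{i+1}_\Gamma(Z,X)$, again independent of $\pi$ by the same argument. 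Substituting these isomorphisms turns the six-term sequence of the extension into precisely the displayed hexagon, the two horizontal maps $KK^i_\Gamma(Z,X)\to K_i\big(C(X)\rtimes\Gamma\big)$ being the transported connecting homomorphisms $\partial_{i+1}$.

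The main obstacle, and the real content of the theorem, is the last step: showing that under these identifications $\partial_{i+1}$ coincides with the Baum--Connes index map $\mu^\maG_{i,Z}$ for the chosen admissible completion. I would follow the template used by Higson--Roe in the maximal case and in \cite{BenameurRoyII} for the reduced completion, but carry it out directly for an arbitrary admissible functor. A class in $KK^{j}_\Gamma(Z,X)$ is represented, after a stabilization, by a $\Gamma$-equivariant, properly supported, self-adjoint ``generalized Dirac'' operator $F$ over $Z\times X$; amplified and inserted into the composition-module description of $\maE_Z^{\what H}$, it yields a self-adjoint lift in $D^*_\Gamma(\maE_Z^{\what H})$ whose class in $Q^*_\Gamma(\maE_Z^{\what H})$ is the image of the given $KK$-class under the identifications above, and $\partial_{i+1}$ of this class is then computed by the standard graph-projection (Connes--Skandalis index) recipe inside $\mathbb K(\maE_Z^{\what H})\cong C^*_\Gamma(\maE_Z^{\what H})$. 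On the other hand, for the proper cocompact $Z$ the index map $\mu^\maG_{j,Z}$ is, by construction, the Kasparov product of the descent $j^\Gamma_\rtimes$ of the cycle with the class of the Mishchenko projection associated with the compactly supported cut-off function on $Z\times X$ --- which is exactly the projection whose range defines $\maE_Z^{\what H}$. Matching the two computations amounts to threading the composition-module structure of $\maE_Z^{\what H}$ through the definition of this Kasparov product, i.e. a Paschke-duality bookkeeping. The delicate point is to keep everything (descent, cut-off class, graph projection) tracked uniformly with respect to the natural transformations $\rtimes_{\max}\to\rtimes\to\rtimes_r$, so that the argument is genuinely completion-agnostic and does not merely reduce, via naturality, to the reduced case --- which on its own would not suffice for non-exact completions.

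Finally, to present the hexagon as canonically attached to $Z$, I would verify, as for the other choices, that it does not depend on the cut-off function used to build the Mishchenko projection: any two admissible cut-offs give homotopic projections, hence unitarily equivalent modules $\maE_Z^{\what H}$ compatibly with the ideal, the dual algebra and the quotient, so the whole six-term sequence depends only on $Z$ and the fixed admissible completion, as claimed.
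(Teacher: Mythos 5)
Your proposal follows essentially the same route as the paper: apply the $K$-theory six-term sequence to the extension $0\to C^*_\Gamma(\maE_Z^{\what H})\to D^*_\Gamma(\maE_Z^{\what H})\to Q^*_\Gamma(\maE_Z^{\what H})\to 0$, identify the ideal with $\maK_{C^*(\maG)}(\maE_Z^{\what H})$ and invoke fullness of the Connes--Skandalis module for the Morita isomorphism $K_i\cong K_i(C(X)\rtimes\Gamma)$, identify the quotient with $Q^*_\Gamma(X;Z,\what H)$ and apply the Paschke--Higson isomorphism from the reduced case, and finally recognize the boundary maps as the assembly maps obtained by Kasparov product with the Mishchenko class. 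You correctly single out the boundary-map identification (and its uniformity across admissible completions) as the substantive step, which the paper treats by the same descent/Mishchenko-projection mechanism.
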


When we apply this theorem to the minimal admissible completion, say the reduced crossed product, we get the exact sequence obtained in \cite{BenameurRoyII}. On the other hand, using the maximal admissible completion, say the full crossed product, we get the allowed maximal Higson-Roe sequence studied  in  \cite{VictorThesis}, which in turn gives back  the maximal Higson-Roe sequence obtained in \cite{HigsonRoe2008} when  $X=\{\bullet\}$. If we use the minimal exact and Morita compatible completion of \cite{BaumGuentnerWillett} then we obtain a six-term exact sequence involving the rectified Baum-Connes index map associated with $Z$, with coefficients in $C(X)$. 

Applying  the main theorem of \cite{BenameurRoyPPV}, we  prove that the above six-term sequence  is functorial in $Z$. This part is technically more involved than in the reduced case, but all the statements coincide with the expected ones. More precisely, we show that if 
$\iota:Z\hookrightarrow Z'$ be a $\Gamma$-inclusion  of the proper cocompact  metric $\Gamma$-spaces ($Z$ is closed in $Z'$), and  $\pi:C_0(Z)\to \maL_{C(X)}(C(X)\otimes H)$ and $\pi':C_0(Z')\to \maL_{C(X)}(C(X)\otimes H')$ are two non-degenerate faithful representations, then there is  a well defined $\Z_2$-graded functoriality morphism
$$
\iota_{\pi, \pi'}^{Z, Z'} : K_* (D^*_\Gamma (\maE_Z^{\what H})) \longrightarrow K_* (D^*_\Gamma (\maE_{Z'}^{{\what H}'})),
$$
and an induced morphism $K_* (Q^*_\Gamma (\maE_Z^{\what H})) \to K_* (Q^*_\Gamma (\maE_{Z'}^{{\what H}'}))$  on the quotient algebras which is compatible with the functoriality morphism $K_*(Q^*_\Gamma (X; Z, \what{H}))\to K_*(Q^*_\Gamma (X; Z', \what{H}'))$  defined in \cite{BenameurRoyII}. Moreover, these morphisms only depend, up to  conjugation by isomorphisms, on the $\Gamma$-pair $Z\hookrightarrow Z'$. Finally,  if $ Z'\hookrightarrow Z''$ is another $\Gamma$-inclusion, then $
{\iota}_{\pi', \pi''}^{Z', Z''} \circ \iota_{\pi, \pi'}^{Z, Z'} = {\iota}_{\pi, \pi''}^{Z, Z''}$.
A corollary of all these functoriality results is that we can define  universal analytic structure groups $\maS_*(\Gamma, X)$ for any admissible crossed product completion, and that these  fit in a universal periodic six-term exact sequence corresponding to this fixed admissible completion. More precisely, 

\begin{theorem}
Given a finitely generated countable discrete  group $\Gamma$ and a compact metrizable finite dimensional $\Gamma$-space $X$, there exists for any admissible crossed product completion  a universal analytic six-term exact sequence
$$\xymatrix{
RK_{0, \Gamma} ({\underline{E}}\Gamma, X) \ar@{->}[r]^{\mu_{0, \maG}\hspace{0,3cm}}\ar@{<-}[d] &
K_0(C(X)\rtimes\Gamma)\ar@{->}[r]&
\maS_1(\Gamma, X)\ar@{->}[d]\\
\maS_0(\Gamma, X)\ar@{<-}[r]&
K_1(C(X)\rtimes\Gamma)\ar@{<-}[r]^{\hspace{0,3cm}\mu_{1, \maG}}&
RK_{1, \Gamma} ({\underline{E}}\Gamma, X) 
}$$
\end{theorem}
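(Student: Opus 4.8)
The plan is to obtain the universal sequence as the inductive limit, over an exhaustion of the classifying space for proper actions $\underline{E}\Gamma$ by metric proper cocompact $\Gamma$-subspaces, of the six-term exact sequences produced by the first theorem, the bonding maps being the functoriality morphisms $\iota^{Z,Z'}_{\pi,\pi'}$ constructed above. Concretely, since $\Gamma$ is countable we fix a countable $\Gamma$-CW model of $\underline{E}\Gamma$ and write it as an increasing union $Z_1\subseteq Z_2\subseteq\cdots$ of cocompact $\Gamma$-subcomplexes, each proper, metrizable and equipped with a compatible $\Gamma$-invariant metric; enlarging Hilbert spaces if necessary we choose faithful non-degenerate $\Gamma$-equivariant $C(X)$-representations $\pi_n$ of $C_0(Z_n)$ extending one another, so that the $\iota^{Z_n, Z_{n+1}}_{\pi_n, \pi_{n+1}}$ form an honest inductive system (and not merely one up to conjugation). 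I then \emph{define}
$$
\maS_i(\Gamma, X)\ :=\ \varinjlim_n\, K_i\big(D^*_\Gamma(\maE_{Z_n}^{\what H_n})\big),\qquad i=0,1.
$$

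Next I would identify the inductive limits of the remaining three corners of the sequence. For the $K$-homology corner, by the very definition of representable equivariant $K$-homology with coefficients in the $\Gamma$-algebra $C(X)$, together with the cofinality of $\{Z_n\}$ among all cocompact $\Gamma$-subspaces of $\underline{E}\Gamma$, one has
$$
\varinjlim_n KK^j_\Gamma(Z_n, X)\ \cong\ RK_{j,\Gamma}(\underline{E}\Gamma, X),\qquad j=0,1.
$$
For the $C^*$-algebra corner, the isomorphism $C^*_\Gamma(\maE_{Z_n}^{\what H_n})\cong\mathbb K(\maE_{Z_n}^{\what H_n})$ and the Morita equivalence with $C(X)\rtimes\Gamma$ from the first part are compatible with the inclusions of Roe ideals induced by $\iota^{Z_n, Z_{n+1}}_{\pi_n, \pi_{n+1}}$; these inclusions are full corner embeddings, so the bonding map induced on $K_i(C(X)\rtimes\Gamma)$ is the identity, and this corner is the constant system with value $K_i(C(X)\rtimes\Gamma)$. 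Finally, naturality of the Baum-Connes assembly map in the space variable --- i.e. the compatibility of the $\mu^\maG_{i,Z_n}$ with the bonding maps, which is part of the functoriality package --- shows that $\varinjlim_n \mu^\maG_{i,Z_n}$ is precisely the Baum-Connes assembly map $\mu_{i,\maG}\colon RK_{i,\Gamma}(\underline{E}\Gamma, X)\to K_i(C(X)\rtimes\Gamma)$ attached to the fixed admissible completion.

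Since a filtered colimit of exact sequences of abelian groups is exact, applying $\varinjlim_n$ to the six-term sequences of the first theorem --- which by the functoriality results quoted above fit together into an inductive system of six-term sequences --- yields the asserted universal six-term exact sequence. Independence of the construction from the chosen model of $\underline{E}\Gamma$, from the exhaustion and from the representations $\pi_n$ is then reduced to: the $\Gamma$-homotopy uniqueness of $\underline{E}\Gamma$ (inducing compatible isomorphisms on the colimits because each term of the sequence is $\Gamma$-homotopy functorial in $Z$), a cofinality comparison of two exhaustions, and the already established independence of $K_*(D^*_\Gamma(\maE_Z^{\what H}))$ from $\pi$ via the equivariant Pimsner-Popa-Voiculescu theorem.

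The main difficulty, already apparent in the functoriality section, is coherence. The morphisms $\iota^{Z,Z'}_{\pi,\pi'}$ are canonical only up to conjugation by isomorphisms, so one must ensure the resulting system of $K$-theory groups is genuinely directed: here one uses that the conjugating isomorphisms are homotopic to the identity, hence act trivially on $K$-theory, and that the composition law ${\iota}_{\pi', \pi''}^{Z', Z''} \circ \iota_{\pi, \pi'}^{Z, Z'} = {\iota}_{\pi, \pi''}^{Z, Z''}$ holds strictly once compatible representations are fixed. One must also check that the bonding maps on the three corners assemble into a \emph{morphism} of six-term sequences, i.e. that they commute with the connecting maps $K_i(Q^*_\Gamma(\maE_Z^{\what H}))\to K_{i-1}(C^*_\Gamma(\maE_Z^{\what H}))$ and with the maps $K_i(D^*_\Gamma(\maE_Z^{\what H}))\to K_i(Q^*_\Gamma(\maE_Z^{\what H}))$; this is exactly where the naturality statements of the functoriality section --- including the compatibility, on the Calkin side, with the functoriality maps of \cite{BenameurRoyII} --- are used in full.
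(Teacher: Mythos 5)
Your proof follows essentially the same route as the paper: both obtain the universal sequence as the filtered colimit, over proper cocompact $\Gamma$-subspaces of $\underline{E}\Gamma$, of the six-term sequences attached to each $Z$, with the functoriality morphisms $\iota^{Z,Z'}$ as bonding maps, the $K_*(C(X)\rtimes\Gamma)$ corner constant via the Morita identifications, the $KK^*_\Gamma(Z,X)$ corner assembling to $RK_{*,\Gamma}(\underline{E}\Gamma,X)$, and exactness preserved under filtered colimits; the coherence points you raise are exactly what the paper's earlier functoriality results (independence of $(\Ad_W)_*$ from $V$, the composition law $\iota^{Z',Z''}\circ\iota^{Z,Z'}=\iota^{Z,Z''}$, and compatibility with the Paschke and Morita isomorphisms) are designed to supply. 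The only discrepancy is a degree convention: the paper sets $\maS_i(\Gamma,X):=\varinjlim_{Z} K_{i+1}(D^*_\Gamma(\maE_Z))$ (note the shift), so with your definition $\maS_i:=\varinjlim_n K_i(D^*_\Gamma(\maE_{Z_n}^{\what{H}_n}))$ the labels $\maS_0$ and $\maS_1$ in the displayed sequence would come out interchanged.
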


\medskip

When the isotropy groups of the $\Gamma$-action are torsion-free, we can replace the universal group $RK_{0, \Gamma} ({\underline{E}}\Gamma, X)$ by the compactly supported $K$-homology of the classifying CW-complex $B\maG$ of the transformation groupoid $\maG=X\rtimes\Gamma$. An obvious consequence of our exact sequence applied to the Baum-Guentner-Willett crossed product completion,  is that the rectified Baum-Connes conjecture for $\Gamma$ with coefficients in $C(X)$ \cite{BaumGuentnerWillett}  is satisfied if and only if the corresponding analytic structure groups  vanish, hence in this case all invariants that would be extracted from these structure groups will vanish as well. Notice also that the maximal  universal periodic sequence  is functorial in $X$, so given any countable discrete finitely generated group $\Gamma$, there exists for any  finite dimensional compact $\Gamma$-space $X$, a morphism of exact sequences from the maximal Higson-Roe sequence for the group $\Gamma$ \cite{HigsonRoe2008} to that for $X\rtimes \Gamma$, which is the identity morphism when $X$ is reduced to $\{\bullet\}$. In \cite{VictorThesis}, a morphism sending the maximal sequence obtained above to a short exact sequence involving $\ell^2$ dual algebras and von Neumann algebras, with traces associated with an invariant measure (when such data does exist), is constructed and yields to connections with rigidity of measured rho invariants as studied in \cite{BenameurPiazza}, see also \cite{BenameurPreprint} where the  complete rigidity corollaries are carried out.

\medskip

{\em{Acknowledgements.}}
We would like to thank M.-P. Aparicio-Gomez, T. Haettel, J. Heitsch, H. Oyono-Oyono, S. Paycha, J. Renault, I. Roy and G. Skandalis for many useful discussions. We are especially indebted to P. Carrillo-Rouse and  P. Piazza  for their precious comments on a preliminary version of this work.
{{Finally, the authors would like to thank the referee for her/his careful reading of the manuscript.}}

\medskip

\tableofcontents

\medskip

%
%
%
%
%
%
%
%
%

\vspace{1\baselineskip}

\section{Preliminary results}\label{prelim}

\subsection{Lifts of operators on Hilbert modules}

We shall use the standard definition of Hilbert modules and adjointable operators between them. See for instance \cite{Lance}. 

\medskip
%
%


Given Hilbert $A$-modules $E$ and $E'$, the space of adjointable operators from $E$ to $E'$ will be abusively denoted $\maL_A(E, E')$. If $(x, x')\in E\times E'$, then we denote by  $\theta_{x', x}\in \maL_A(E, E')$ the  ``one-dimensional'' adjointable operator   defined by
$$
\theta_{x', x} (z) = x' \; \langle x, z\rangle, \quad\text{ for }z\in E.
$$
The subspace composed of  finite sums of such one-dimensional operators is the space of ``finite-dimensional'' adjointable operators, its closure in $\maL_A(E, E')$  is the space $\maK_A(E, E')$ of $A$-compact operators, or simply compact operators when no confusion can occur. Notice that if $E'=E$ then $\maK_A(E, E)=\maK_A(E)$ is a closed two-sided involutive ideal in $\maL_A(E)$, exactly as for the usual case of $A=\C$. In particular $\maL_A(E)$ is the multiplier algebra of $\maK_A(E)$. 

\begin{example}\label{AH}\
For any separable Hilbert space  $H$,  $H\otimes A$,  is a Hilbert $A$-module. Then $\maK_A(A\otimes H)$ is isomorphic to the $C^*$-algebra  $A\otimes \maK(H)$. 
 If we identify $H$ with $ \ell^2(\N)$ using an orthonormal basis, then $H\otimes A$ is identified with  the space $\ell^2(\N, A)$ of $A$-valued sequences $(a_n)_{n\in \N}$ which are square summable, i.e. such that the series $\sum_{n\in \N} a_n^* a_n$ converges in $A$. 
 \end{example}

Let $E$ be a Hilbert module over a $C^*$-algebra $B$ and $F$ a Hilbert module over a $C^*$-algebra $A$. Assume  that $\alpha:B\to\maL_A(F)$ is a homomorphism of $C^*$-algebras, also called a representation of $B$ in the Hilbert $A$-module $F$. Then  we denote by $E\otimes_\alpha F$ the composition Hilbert $A$-module,  see for instance  \cite{Lance}. 

For any element $e\in E$ we then denote by $L_e\in\maL_A(F,E\otimes_BF)$ the operator defined by $L_e(f)=e\otimes f$ for $f\in F$. Then, $L_e$ is adjointable with the adjoint  given by $(L_e)^*(e'\otimes f)=\alpha (\langle e,e'\rangle)f$  for any $e'\in E$ and $f\in F$.

\medskip

\begin{definition}\cite{HigsonRoe, ConnesSkandalis}\
\label{lift}
Let $T\in\maL_A(F)$ be an adjointable operator. An operator $T_E\in\maL_A(E\otimes_\alpha F)$ will be called  a \emph{lift} of $T$ if for any $e\in E$,  the following diagrams commute up to  compact operators:
$$\xymatrix{
{E\otimes_\alpha F} \ar@{->}[r]^{T_E} \ar@{<-}[d]_{L_e}  & {E\otimes_\alpha F} \ar@{<-}[d]^{L_e} 
&\mbox{ and } &
{E\otimes_\alpha F} \ar@{->}[r]^{T_E} \ar@{->}[d]_{(L_e)^*}  & {E\otimes_\alpha F} \ar@{->}[d]^{(L_e)^*}\\
{F} \ar@{->}[r]^{T}& {F} & & {F} \ar@{->}[r]^{T}& {F} 
}$$
\end{definition}

\medskip

Recall that a Hilbert module is finitely generated projective if the identity is a compact operator. The following result is proved in \cite{ConnesSkandalis}, Appendix A (where the term \textit{connection} is used).


\medskip

\begin{lemma}\cite{ConnesSkandalis}\label{CS}\
If $A$ and $B$ are $\sigma$-unital $C^*$-algebras and $E$ is a finitely generated projective Hilbert $B$-module, then every operator $T\in\maL_A(F)$ which commutes up to compacts with the representation $\alpha$ of $B$ admits a lift $T_E\in\maL_A(E\otimes_\alpha F)$. 
\end{lemma}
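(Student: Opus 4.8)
The statement to prove is Lemma \ref{CS}: if $A,B$ are $\sigma$-unital, $E$ is a finitely generated projective Hilbert $B$-module, and $T\in\maL_A(F)$ commutes up to compacts with the representation $\alpha:B\to\maL_A(F)$, then $T$ admits a lift $T_E\in\maL_A(E\otimes_\alpha F)$. The natural strategy is to reduce to the free finite-rank case by Kasparov stabilization and then write down an explicit lift. First I would use that $E$ is finitely generated projective to choose a projection $p\in M_n(B)$ (equivalently $p\in\maL_B(B^n)$, with $B^n$ the free module $B\otimes\mathbb{C}^n$, which here is genuinely free of finite rank since no unit is needed for finite rank) with $E\cong p\,B^n$ as Hilbert $B$-modules. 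Then $E\otimes_\alpha F\cong (p\otimes 1)\,(B^n\otimes_\alpha F)\cong (p\otimes 1)\,F^n$, where $p\otimes 1$ denotes the image of $p$ under the amplified representation $M_n(\alpha):M_n(B)\to M_n(\maL_A(F))=\maL_A(F^n)$. So it suffices to produce a lift on $F^n$ that preserves the submodule $(p\otimes 1)F^n$, and then compress.

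The key computation is then to check that the diagonal operator $T^{(n)}:=\mathrm{diag}(T,\dots,T)\in\maL_A(F^n)$ is a lift of $T$ on the free module $B^n$, i.e. that it commutes up to compacts with the $L_e$ and $(L_e)^*$ for $e\in B^n$. For $e=(b_1,\dots,b_n)\in B^n$ one has $L_e:F\to F^n$, $f\mapsto(\alpha(b_1)f,\dots,\alpha(b_n)f)$ up to the identification, so $T^{(n)}L_e - L_e T$ has components $[T,\alpha(b_i)]$, each compact by hypothesis; likewise for the adjoints, where $(L_e)^*(f_1,\dots,f_n)=\sum_i\alpha(b_i^*)f_i$ and $(L_e)^*T^{(n)} - T(L_e)^* = \sum_i[\alpha(b_i^*),T]\circ(\text{coordinate projection})$, again compact. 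Next, to descend to $E$, I would set $T_E := (p\otimes 1)\,T^{(n)}\,(p\otimes 1)$ acting on $(p\otimes 1)F^n\cong E\otimes_\alpha F$. This is adjointable. For the lift diagrams on $E$, take $e\in E\subseteq B^n$ (so $(p\otimes 1)$ acts as identity on $L_e$'s image once we track that $pe = e$ in $B^n$); then $L_e^E = (p\otimes 1)\circ L_e^{B^n}$, and since $p$ is a genuine $B$-module map, $(p\otimes 1)$ commutes with $T^{(n)}$ only up to compacts in general — so one needs to verify $[(p\otimes 1),T^{(n)}]$ is compact, which follows from $p\in M_n(B)$ and $T$ commuting with $\alpha$ up to compacts. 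With that in hand the compressed diagrams commute up to compacts by the free-module computation plus bookkeeping with $p$.

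The main obstacle is the honest verification that $(p\otimes 1)$ — the image of the projection $p\in M_n(B)$ — commutes with $T^{(n)}$ modulo $\maK_A(F^n)$, and that the compressed operator's lift diagrams close up correctly; this is where $\sigma$-unitality of $B$ (ensuring Kasparov's theorem applies, and that the entries of $p$ genuinely lie in $B$ so that $[\alpha(p_{ij}),T]$ is compact) and the finite-rank hypothesis on $E$ (so that no countable-rank stabilization is needed and $\mathrm{diag}(T,\dots,T)$ is literally a finite matrix, keeping all commutators manifestly compact) both get used. Everything else — adjointability of $L_e$, $(L_e)^*$, the composition-module identifications, closure of $\maK_A$ under the relevant compressions — is routine and can be cited from \cite{Lance} and the preceding Example \ref{AH}. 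The cleanest writeup simply invokes the free-rank-$n$ case as the model and then transports it along $E\cong pB^n$, flagging the one nontrivial point as above.
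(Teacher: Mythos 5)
The paper does not actually prove this lemma --- it is quoted from \cite{ConnesSkandalis}, Appendix A --- so there is no in-text argument to compare against. Your proof is correct and is essentially the standard ``Grassmann connection'' construction: writing $E\cong pB^n$, your compressed operator $T_E=(p\otimes 1)\,T^{(n)}\,(p\otimes 1)$ coincides with $\sum_i L_{e_i}\, T\, (L_{e_i})^*$ for the frame $e_i=p\epsilon_i$, which is exactly the machinery the paper itself deploys (via $\sum_i L_{e_i}(L_{e_i})^*=\id$) in the proof of the converse Proposition that follows the lemma. Two small remarks. First, the step you flag as the main obstacle --- compactness of $[(p\otimes 1),T^{(n)}]$ --- is true but not actually needed: since $pe=e$ for $e\in E$, one has $(p\otimes 1)L_e=L_e$ and, taking adjoints, $(L_e)^*(p\otimes 1)=(L_e)^*$, so
$T_EL_e-L_eT=(p\otimes 1)\bigl(T^{(n)}L_e-L_eT\bigr)$ and $(L_e)^*T_E-T(L_e)^*=\bigl((L_e)^*T^{(n)}-T(L_e)^*\bigr)(p\otimes 1)$,
and both reduce directly to the free-module computation, where the defects are built from the finitely many compact commutators $[T,\alpha(e_i)]$. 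Second, for non-unital $B$ the projection realizing $E$ may a priori only lie in $M_n(\widetilde{B})$ rather than $M_n(B)$; this is harmless, since extending $\alpha$ unitally to $\widetilde{B}$ only adjoins multiples of the identity, which commute with $T$ exactly --- but it deserves a sentence (the paper's own Remark following the converse Proposition addresses the same point).
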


The expression ``$T$ commutes up to compacts with the representation $\alpha$ of  $B$'' means that the commutators $[T, \alpha(b)]$ are $A$-compact operators of $F$, for all $b\in B$.

\medskip


\medskip

Notice that if $T_E$ is a lift of $T$, then  for any $e,e'\in E$, the operator $
(L_e)^*T_EL_{e'}-\alpha(\langle e,e'\rangle)T$ is a compact operator of the Hilbert $A$-module $F$.
Indeed  $\alpha(\langle e,e'\rangle)=(L_e)^*L_{e'}$ and hence
$$
(L_e)^*T_EL_{e'}-\alpha (\langle e,e'\rangle)T=(L_e)^*(T_E L_{e'} -L_{e'} T)\text{ is compact}.
$$
We also need  the following obvious converse when $E$ is a finitely generated projective module.

\medskip

\begin{proposition}
Assume that  $B$ is unital and that $E$ is a finitely generated projective Hilbert $B$-module. Let $T\in\maL_A(F)$ be as before an adjointable operator which commutes up to compacts with the representation $\alpha$ of $B$, and let $T_E\in \maL_A(E\otimes_\alpha F)$ be a given adjointable operator. Then $T_E$ is a lift of $T$ if and only if 
$$
(L_e)^*T_EL_{e'}-\alpha(\langle e, e'\rangle)T\, \in\, \maK_A(F), \quad \forall e,e'\in E.
$$
\end{proposition}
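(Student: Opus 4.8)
The forward direction is exactly the remark immediately preceding the statement, so the only real content is the converse: assuming $E$ is finitely generated projective over the unital algebra $B$, $T$ commutes up to compacts with $\alpha$, and $(L_e)^*T_EL_{e'}-\alpha(\langle e,e'\rangle)T\in\maK_A(F)$ for all $e,e'\in E$, I must show that the two defining diagrams in Definition~\ref{lift} commute up to compacts, i.e.\ that $T_EL_e-L_eT\in\maK_A(F,E\otimes_\alpha F)$ and $(L_e)^*T_E-T(L_e)^*\in\maK_A(E\otimes_\alpha F,F)$ for every $e\in E$. The key structural input is that since $E$ is finitely generated projective over a unital $B$, the identity operator $\id_E$ is compact, hence can be written (after enlarging to a finite free module and compressing, or directly) as a finite sum $\id_E=\sum_{i=1}^n\theta_{e_i,f_i}$ for suitable $e_i,f_i\in E$; equivalently there is a finite ``partition of unity'' expressing $z=\sum_i e_i\langle f_i,z\rangle$ for all $z\in E$.

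The plan is as follows. First I would record the factorization $\id_{E\otimes_\alpha F}=\sum_i L_{e_i}(L_{f_i})^*$, which follows by applying both sides to an elementary tensor $e\otimes h$ and using $(L_{f_i})^*(e\otimes h)=\alpha(\langle f_i,e\rangle)h$ together with $\sum_i e_i\langle f_i,e\rangle=e$. Then, for fixed $e\in E$, I would write
\[
T_EL_e-L_eT=\sum_i L_{e_i}\bigl((L_{f_i})^*T_EL_e-\alpha(\langle f_i,e\rangle)T\bigr),
\]
where I use the factorization of the identity on the left factor $E\otimes_\alpha F$ (note $\id\cdot T_EL_e=\sum_iL_{e_i}(L_{f_i})^*T_EL_e$) and, on the second term, that $L_eT=\bigl(\sum_iL_{e_i}(L_{f_i})^*\bigr)L_eT=\sum_iL_{e_i}\alpha(\langle f_i,e\rangle)T$ since $(L_{f_i})^*L_e=\alpha(\langle f_i,e\rangle)$. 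By hypothesis each bracketed term $(L_{f_i})^*T_EL_e-\alpha(\langle f_i,e\rangle)T$ lies in $\maK_A(F)$, and $L_{e_i}\in\maL_A(F,E\otimes_\alpha F)$; since $\maK_A(F)$ composed with an adjointable operator lands in $\maK_A(F,E\otimes_\alpha F)$, the finite sum is compact, which is precisely commutativity up to compacts of the first diagram. The second diagram is handled by taking adjoints: $(L_e)^*T_E-T(L_e)^*$ is the adjoint of $T_E^*L_e-L_eT^*$, and $T_E^*$ is a candidate lift of $T^*$ satisfying the same hypothesis (since $(L_e)^*T_E^*L_{e'}-\alpha(\langle e,e'\rangle)T^*=\bigl((L_{e'})^*T_EL_e-\alpha(\langle e',e\rangle)T\bigr)^*\in\maK_A(F)$, using that $\alpha$ is a $*$-homomorphism and $\langle e,e'\rangle^*=\langle e',e\rangle$), so the first part applies verbatim to $T_E^*$ and $T^*$; compactness is preserved under adjoints.

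I expect the only mildly delicate point to be the very first step: making precise that finite generation and projectivity over a \emph{unital} $B$ yield a \emph{finite} family $\{(e_i,f_i)\}$ with $\sum_iL_{e_i}(L_{f_i})^*=\id_{E\otimes_\alpha F}$. This is where unitality is genuinely used (for a non-unital $B$ one only gets an approximate unit of compacts, and the sum would be infinite, breaking the ``finite sum of compacts is compact'' argument directly, though one could still push it through by a density/approximation argument). Once that finite factorization of the identity is in hand, the rest is a two-line bookkeeping computation plus the observation that $\maK_A(F)$ is an ideal and stable under adjoints, together with the elementary identities $(L_e)^*L_{e'}=\alpha(\langle e,e'\rangle)$ and $\sum_i L_{e_i}(L_{f_i})^*=\id$. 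So the main obstacle is conceptual bookkeeping rather than any hard estimate; no new analytic machinery beyond Lemma~\ref{CS}'s setup is required.
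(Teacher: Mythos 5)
Your proof is correct and follows essentially the same route as the paper's: both rest on writing $\id_{E\otimes_\alpha F}$ as a finite sum $\sum_i L_{e_i}(L_{f_i})^*$ coming from a finite frame for the finitely generated projective module $E$ (the paper takes $e_i=f_i=P\ep_i$ from a presentation $E\simeq P(B^n)$), then telescoping $T_EL_e-L_eT$ through this identity and handling the second diagram by passing to adjoints. The only nitpick is that your displayed identity $(L_e)^*T_E^*L_{e'}-\alpha(\langle e,e'\rangle)T^*=\bigl((L_{e'})^*T_EL_e-\alpha(\langle e',e\rangle)T\bigr)^*$ holds only modulo $\maK_A(F)$ (the adjoint of $\alpha(\langle e',e\rangle)T$ is $T^*\alpha(\langle e,e'\rangle)$), but the discrepancy is the commutator $[\alpha(\langle e,e'\rangle),T^*]$, which is compact by the standing hypothesis on $T$, so the argument goes through.
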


\begin{proof}
There exists a projection $P\in\maL_B(B^n)$ such that $E\simeq P(B^n)$.
Set $e_i=P\ep_i$ where $(\ep_1,\dots,\ep_n)$ is the canonical basis of $B^n$, we compute  for any $e\in E$, $e=(b_1,\dots b_n)$, and any $f\in F$ 
\begin{eqnarray*}
L_{e_i}(L_{e_i})^*(e\otimes f) 
&=& P(\ep_ib_i)\otimes f.
\end{eqnarray*}
Hence 
$
\sum_{i=1}^nL_{e_i}(L_{e_i})^*=\id_{E\otimes_\alpha  F}$, and therefore
\begin{eqnarray*}
T_EL_e - L_e T 
&=&\sum_{i=1}^nL_{e_i}((L_{e_i})^*T_EL_e-\alpha (\langle e_i,e\rangle)T ).
\end{eqnarray*}
The similar relation
$$
((L_{e'})^*T_E^*L_e-(L_{e'})^*L_eT^*)^*=((L_e)^* T_E-T (L_e)^*) L_{e'},
$$
allows to prove similarly that   $(L_e)^*T_E-T(L_e)^*$ is  compact. 
\end{proof}

\begin{remark}\ 
The previous proposition is still valid for non unital $B$ when $E=pB^n$  with $p$ a projection in $M_n(B)$. Indeed, in this case $pB^n=p{\widetilde B}^n$ where ${\widetilde B}$ is the minimal unitalization of $B$. 
\end{remark}

%

%

\medskip

\subsection{The regular HR sequence}

First we fix the notations and rules for our transformation groupoid $\maG$. Let $\Gamma$ be a countable discrete group and let  $X$ be a compact Hausdorff  space together with a given action of $\Gamma$ by homemorphisms on the left.  We shall simply denote by $\gamma x$ the result of the action of the homemorphism of $X$ associated with $\gamma\in \Gamma$ on the chosen element $x\in X$. The neutral element of the group $\Gamma$ will be denoted $1_\Gamma$, and for $\gamma\in \Gamma$, $\delta_\gamma$ will usually denote the characteristic function of $\{\gamma\}$ in $\Gamma$. 
We consider the crossed product groupoid $\maG=X\rtimes \Gamma$ defined as follows. The space of units is $X$, and the space of arrows is the cartesian product $X\times\Gamma$ with its topology inherited from $X$. The operation rules are given by
$$
(\gamma x,\gamma)(x,\mu)=(\gamma x,\gamma \mu)\text{ so }s(x,\gamma)=\gamma^{-1} x \mbox{  and  } r(x,\gamma)=x,
$$
where $s$ and $r$ are the source and range map $\maG$.

%

Let $Z$ be a locally compact proper metric space, and set $Y=Z\times X$. We denote by $\rho: Y\to X$ the second projection, so that $Y$ can be understood as a specific $\maG$-space with the anchor map being given by $\rho$. Then $C_0(Y)$ is a $C(X)$-algebra when we set
$$
gf=fg := {{(\rho^*f)  g}}\text{ for }f\in C(X)\text{ and } g\in C_0(Y).
$$
Given a Hilbert $C(X)$-module $E$, it is easy to check that the $C^*$-algebra $\maL_{C(X)}(E)$ is also a $C(X)$-algebra, see for instance \cite{BenameurRoyI}. 
A specific $C(X)$-Hilbert module is given by $C(X)\otimes H$ for a given Hilbert space $H$, as explained in Example \ref{AH}. Then an operator $T\in\maL_{C(X)}(C(X)\otimes H)$ is canonically  given by a $*$-strongly continuous  field $(T_x)_{x\in X}$ of bounded operators on $H$.

For any   $\maG$-space $Y$,  a \underline{$C(X)$-representation} of the $C(X)$-algebra $C_0(Y)$ is  a pair $(E, \pi)$ where $E$ is a Hilbert $C(X)$-module, and $\pi: C_0(Y)\to\maL_{C(X)}(E)$  is a $C(X)$-homomorphism, i.e. a $*$-homomorphism such that
$$
\pi(f g)e=\pi(g)(ef), \quad \text{ for }f\in C(X), g\in C_0(Y)\text{ and }  e\in E,\: .
$$
In particular, a $C(X)$-representation $\pi: C_0(Y)\to\maL_{C(X)}(C(X)\otimes H)$ assigns to any $f\in C_0(Y)$, the operator $\pi(f)$ which can be written as the $*$-strongly continuous field $(\pi_x(f))_{x\in X}$ where each $\pi_x:C_0(Y)\to\maL(H)$ is a $^*$-representation of $C_0(Y)$  in the Hilbert space $H$,  which factors through $C_0(Y_x)$ so that writing $f_x=f\vert_{Y_x}$, $\pi_x(f)=\pi_x(f_x)$. {{Here and in the sequel $Y_x$ denotes the fiber of the anchor map $Y\to X$}}. When $Y=Z\times X$ as above, $C_0(Z)$ embeds as a $C^*$-subalgebra of $C_0(Y)$ and 
$\pi$ is thus given by the family $(\pi_x)_x$ of representations of $C_0(Z)$. We shall only consider this latter case in the sequel.

 
\begin{definition}\
\begin{itemize}
\item[(i)] {{The $C(X)$-representation  $(E, \pi)$   is fiberwise \underline{faithful} if for any $x\in X$, the representation $\pi_x$ in $E_x$ is faithful.}}
\item[(ii)] {{The $C(X)$-representation  $(E, \pi)$   is fiberwise \underline{non-degenerate} if for any $x\in X$, the representation $\pi_x$ in $E_x$ is non-degenerate, i.e. $\pi_x (C_0(Z))E_x$ is dense in $E_x$.}}
\item[(iii)] {{The $C(X)$-representation  $(E, \pi)$ is fiberwise \underline{standard} if for any $x\in X$, the representation $\pi_x$ in $E_x$ is standard, i.e. $\pi_x (g)\in \maK(E_x)\Rightarrow g=0$, for $g\in C_0(Z)$.
\item[(iv)] The $C(X)$-representation  $(E, \pi)$ is fiberwise  \underline{ample} if it is  fiberwise non-degenerate and  fiberwise standard.}}
\end{itemize}
\end{definition}

\medskip

{{For simplicity, we shall sometimes drop the word ''fiberwise'', so for instance an ample representation will mean  fiberwise ample representation, and similarly for faithful representation which means in the present paper fiberwise faithful representation. Said differently, $(E, \pi)$ is ample if for any $x\in X$, the representation $\pi_x$ is ample in the usual sense.  Notice that if $\pi(C_0(Y)) (E)$ is dense in $E$, then the representation is  non-degenerate. Also, if $\pi$ is standard, then $\pi (\varphi) \in \maK_{C(X)} (E) \Rightarrow \varphi=0$ for any $\varphi\in C_0(Y)$, so that one can be tempted to use this other definition. Our choice is motivated by the notion of homogeneous $X$-extension as introduced and studied in \cite{PPV1, PPV2}, see  \cite{BenameurRoyII} for more details. When working with a constant field of representation, say with a fixed  representation of $C_0(Z)$ in a fixed Hilbert space $H$, these subtelties disappear. In the sequel, and unless otherwise specified, our representations will always be non-degenerate. It is easy to check that for any given non-degenerate representation in $E$, {{one obtains}} an ample representation by tensoring with the identity of the Hilbert space $\ell^2(\N)$.}}

Let us fix  a separable (infinite dimensional) Hilbert space $H$ and  a non-degenerate $C(X)$-representation $\pi: C_0(Y)\to\maL_{C(X)}(C(X)\otimes H)$. Recall that  $\Gamma$ acts properly on the right  on the locally compact metric space $Z$. We assume that the action on $Z$ is cocompact and that the metric of $Z$ is a proper metric which is $\Gamma$-invariant.

\medskip

\begin{definition} 
An operator $T\in\maL_{C(X)}(C(X)\otimes H)$ has finite propagation (with respect to $\pi$) if there exists $R>0$ such that $
\pi (\varphi) T\pi (\psi) = 0$ for any $\varphi, \psi\in C_c (Z)$  with $d_Z (\Supp (\varphi), \Supp (\psi)) >R$. 
The smallest such constant $R$ is the propagation of $T$. 
%
%
\end{definition}

\medskip

Assume  from now on  
that $H$ is a Hilbert space unitary representation of $\Gamma$, so we also have a unitary representation $U:\Gamma\to\maU(H)$. We shall sometimes call such $H$ a $\Gamma$-Hilbert space.
An operator $T\in\maL_{C(X)}(C(X)\otimes H)$ is then \emph{$\Gamma$-equivariant}  if the associated field $(T_x)_{x\in X}$ is a $\Gamma$-equivariant field over $X$, i.e. it satifies
$$
T_{\gamma x}=U_\gamma T_xU_\gamma^*, \quad \forall x\in X,\:\forall\gamma\in\Gamma,\: 
$$
The subspace of $\Gamma$-equivariant adjointable operators will be denoted  $\maL_{C(X)}(C(X)\otimes H)^\Gamma$.

If the operator $T$ commutes with the representation $\pi$ up to compact operators, say 
$$
T\pi(g)-\pi(g)T\in \maK_{C(X)} (C(X)\otimes H),\quad \forall g\in C_0(Z\times X),
$$
then the operator $T$ will be called a \emph{pseudolocal} operator.
When $\pi(g)T$ and $T\pi(g)$ are already compact operators  for any $g\in C_0(Z\times X)$, then $T$ will be called a \emph{locally compact} operator.

\begin{definition}\cite{BenameurRoyII}\label{EquivariantRoeAlgebras}
Recall that $Z$ is a proper cocompact locally compact Hausdorff $\Gamma$-space, and $H$ is a $\Gamma$-equivariant Hilbert space which is endowed with the $\Gamma$-equivariant $C(X)$-representation $\pi: C_0(Z\times X)\to \maL{_{C(X)}}({C(X)\otimes} H)$.  
\begin{enumerate}
\item[•] The dual Roe algebra associated with our data, denoted $D^*_\Gamma(X; Z,H)$,  will be the norm closure  in $\maL_{C(X)}(C(X)\otimes H)$ of the subspace composed of $\Gamma$-equivariant finite propagation pseudolocal operators.  
\item[•] The dual Roe ideal associated with our data, denoted $C^*_\Gamma(X; Z,H)$, will be the norm closure in $\maL_{C(X)}(C(X)\otimes H)$ of the subspace composed of $\Gamma$-equivariant finite propagation locally compact operators.  
\end{enumerate}
\end{definition}

Clearly every element of $D^*_\Gamma(X; Z,H)$ is a pseudolocal $\Gamma$-equivariant operator, and every element of $C^*_\Gamma(X; Z,H)$ is a locally compact $\Gamma$-equivariant operator. 
%

\begin{lemma}\cite{BenameurRoyI}\
The $C^*$-algebra $C^*_\Gamma(X; Z,H)$ is a two-sided  involutive ideal in the $C^*$-algebra  $D^*_\Gamma(X; Z,H)$. Moreover,   an operator $T\in D^*_\Gamma(X; Z,H)$ belongs to the ideal $C^*_\Gamma(X; Z,H)$ if and only if $\pi(g)T$ is a $C(X)$-compact operator for any 
$g\in C_0(Z\times X)$.
\end{lemma}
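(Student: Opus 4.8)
The plan is to dispose of the algebraic assertions first and then treat the characterisation, whose nontrivial half is where the real work lies.

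For the ideal and involutivity statements it is enough to argue on the dense subspaces of generators and pass to norm closures, using that $\maK_{C(X)}(C(X)\otimes H)$ is a closed two‑sided $*$‑ideal in $\maL_{C(X)}(C(X)\otimes H)$ and that multiplication is norm continuous. If $T$ is a $\Gamma$‑equivariant finite propagation locally compact operator, then $T^*$ is $\Gamma$‑equivariant (conjugate the field identity $T_{\gamma x}=U_\gamma T_xU_\gamma^*$), has finite propagation (from $\pi(\varphi)T^*\pi(\psi)=(\pi(\ol\psi)T\pi(\ol\varphi))^*$ and $\Supp\ol\varphi=\Supp\varphi$), and is locally compact (since $\pi(g)T^*=(T\pi(\ol g))^*$ and $T^*\pi(g)=(\pi(\ol g)T)^*$ lie in the $*$‑closed $\maK_{C(X)}$), so $C^*_\Gamma(X;Z,H)$ is $*$‑closed. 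For the ideal property, let $S$ be a generator of $D^*_\Gamma(X;Z,H)$ and $T$ a generator of $C^*_\Gamma(X;Z,H)$; then $ST$ and $TS$ are $\Gamma$‑equivariant, and have finite propagation bounded by the sum of the two propagations (the standard estimate, obtained by inserting a cut‑off function on $Z$ between the factors). Local compactness of $ST$ and $TS$ follows from $\maK_{C(X)}$ being an ideal together with $\pi(g)S=S\pi(g)-[S,\pi(g)]$ and the pseudolocality $[S,\pi(g)]\in\maK_{C(X)}$: e.g. $\pi(g)ST=S(\pi(g)T)-[S,\pi(g)]T\in\maK_{C(X)}$ and $ST\pi(g)=S(T\pi(g))\in\maK_{C(X)}$, and similarly for $TS$. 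Passing to closures yields $D^*_\Gamma\cdot C^*_\Gamma\subseteq C^*_\Gamma$ and $C^*_\Gamma\cdot D^*_\Gamma\subseteq C^*_\Gamma$.

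The ``only if'' direction is immediate: writing $T=\lim T_n$ with $T_n$ equivariant, finite propagation, locally compact, one has $\pi(g)T_n\in\maK_{C(X)}$ and hence $\pi(g)T=\lim\pi(g)T_n\in\maK_{C(X)}$. For the converse, the first point is that for $T\in D^*_\Gamma(X;Z,H)$ the hypothesis ``$\pi(g)T\in\maK_{C(X)}$ for all $g$'' is equivalent to ``$T$ is locally compact'': every element of $D^*_\Gamma(X;Z,H)$ is pseudolocal, so $[T,\pi(g)]\in\maK_{C(X)}$, and therefore $T\pi(g)=\pi(g)T+[T,\pi(g)]\in\maK_{C(X)}$ as soon as $\pi(g)T\in\maK_{C(X)}$. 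It thus remains to prove that a locally compact $T\in D^*_\Gamma(X;Z,H)$ lies in $C^*_\Gamma(X;Z,H)$, i.e. to approximate such $T$ in norm by $\Gamma$‑equivariant finite propagation locally compact operators.

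To this end I would fix a $\Gamma$‑invariant, uniformly locally finite partition of unity $\{\phi_\gamma\}_{\gamma\in\Gamma}$ on $Z$, with each $\phi_\gamma\in C_c(Z)$, $\phi_\gamma\ge 0$, $\sum_\gamma\phi_\gamma=1$, obtained by translating a single $\phi_{1_\Gamma}\in C_c(Z)$ — possible since the $\Gamma$‑action on $Z$ is proper and cocompact. Given a $\Gamma$‑equivariant finite propagation pseudolocal operator $S$ with $\|T-S\|$ small and propagation $R_S$, one forms the band truncation $T_S:=\sum\pi(\phi_\gamma)T\pi(\phi_{\gamma'})$, the sum running over the pairs $(\gamma,\gamma')$ with $d_Z(\Supp\phi_\gamma,\Supp\phi_{\gamma'})\le R_S$. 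This sum is locally finite and its index set is $\Gamma$‑invariant, so $T_S$ is a well‑defined $\Gamma$‑equivariant operator; it has finite propagation by construction; and it is locally compact because $T$ is — after multiplying by a compactly supported $g$, only finitely many terms survive, each of the form $\pi(g\phi_\gamma)T\pi(\phi_{\gamma'})\in\maK_{C(X)}$. Since $\pi(\phi_\gamma)S\pi(\phi_{\gamma'})=0$ for the omitted pairs, one gets $T-T_S=\sum_{d_Z(\Supp\phi_\gamma,\Supp\phi_{\gamma'})>R_S}\pi(\phi_\gamma)(T-S)\pi(\phi_{\gamma'})$, the ``off‑band'' part of $T-S$.

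The main obstacle is to show that this off‑band part is small, uniformly as $R_S\to\infty$, so that $T=\lim_S T_S\in C^*_\Gamma(X;Z,H)$. A plain triangle‑inequality bound does not suffice, because the band‑truncation maps are not uniformly bounded (this is the usual triangular‑truncation phenomenon). Instead one must use the quasi‑locality of $T$ — the fact that $\|\pi(\varphi)T\pi(\psi)\|\to 0$ as $d_Z(\Supp\varphi,\Supp\psi)\to\infty$, which holds since $T$ is a norm limit of finite propagation operators — together with the equivariance and the uniform local finiteness of $\{\phi_\gamma\}$: grouping the off‑band terms by distance scale, estimating each annular contribution with the decay modulus of $T$, and summing. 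This quasi‑locality argument is the technical core and is carried out in \cite{BenameurRoyI}.
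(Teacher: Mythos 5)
The paper itself offers no proof of this lemma --- it is quoted verbatim from \cite{BenameurRoyI} --- so your attempt can only be measured against what a complete argument must contain. The parts you treat in full are correct and standard: stability of the generators under adjoints and under left/right multiplication by generators of $D^*_\Gamma(X;Z,H)$ (using pseudolocality to move $\pi(g)$ across a factor), hence the two-sided involutive ideal property after passing to norm closures; the ``only if'' direction by norm continuity; and the reduction of the ``if'' direction, via pseudolocality of every element of $D^*_\Gamma(X;Z,H)$, to the statement that a locally compact element of $D^*_\Gamma(X;Z,H)$ already lies in $C^*_\Gamma(X;Z,H)$.

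That last statement is the entire content of the lemma, and you do not prove it: you set up the band truncation $T_S$, correctly observe that $T-T_S$ is the off-band part of $T-S$ and that band-truncation maps are not uniformly bounded in the band width, and then assert that a ``quasi-locality'' argument closes the gap, deferring it to \cite{BenameurRoyI}. As sketched, that argument does not close. Quasi-locality only gives $\|\pi(\varphi)T\pi(\psi)\|\le\omega(d)$ with $\omega(d)\to0$ and no rate --- indeed $\omega(d)$ is essentially the distance from $T$ to the finite-propagation operators of propagation less than $d$, i.e.\ exactly the quantity one is trying to control --- while the number of pairs $(\gamma,\gamma')$ contributing to the annulus at distance about $k$ grows with $k$ like the growth function of $\Gamma$, possibly exponentially. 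The resulting bound of the form $\sum_{k\ge R_S}N_k\,\omega(k)$ on the off-band part has no reason to be finite, let alone small as $R_S\to\infty$. So the route you indicate would fail without a genuinely new idea, and the crucial implication $\{T\in D^*_\Gamma(X;Z,H):\ T\ \text{locally compact}\}\subseteq C^*_\Gamma(X;Z,H)$ --- the nontrivial half of the lemma, and the one that makes the quotient $Q^*_\Gamma(X;Z,H)$ computable --- remains unproved in your write-up.
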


We shall denote by $Q^*_\Gamma(X; Z,H)$ the quotient $C^*$-algebra, so that we have the following short exact sequence of $C^*$-algebras
$$
0\to C^*_\Gamma(X; Z,H)\hookrightarrow D^*_\Gamma(X; Z,H)\to Q^*_\Gamma(X; Z,H)\to 0.
$$

Applying the topological $K$-functor, we deduce well defined boundary maps
$$
\partial_0 \;:\; K_0\left(Q^*_\Gamma (X; Z,H)\right) \longrightarrow K_{1}\left( C^*_\Gamma (X; Z, H)\right)  \text{ and } \partial_1 \;:\; K_1\left(Q^*_\Gamma (X; Z,H)\right) \longrightarrow K_{0}\left( C^*_\Gamma (X; Z, H)\right).
$$
which fit in  the corresponding  periodic six-term $K$-theory exact sequence.

Following again \cite{BenameurRoyII}, the Paschke-Higson maps  can be described for our specific groupoid $\maG=X\rtimes \Gamma$ and yield group morphisms
$$
\maP_0^{Z,H} \;:\;  K_0\left( Q^*_\Gamma (X; Z,H)\right) \longrightarrow KK^{1}_\Gamma ( Z, X)\text{ and }\maP_1^{Z, H} \;:\;  K_1\left( Q^*_\Gamma (X; Z,H)\right) \longrightarrow KK^{0}_\Gamma ( Z, X)
$$
where $KK^{*}_\Gamma ( Z, X)$ is the $\Gamma$-equivariant Kasparov $KK$-theory of the pair of $\Gamma$-algebras $C_0(Z), C(X)$, see \cite{Kasparov}. Moreover, at the cost of replacing if necessary the  representation $H$ by the amplification ${\what H}:= H\otimes \ell^2\Gamma\otimes \ell^2\N$ with the representation $\pi\otimes \id\otimes \id$ {{where the unitary representation $U$ of $\Gamma$ is replaced  by $U\otimes\rho\otimes \id$ with $\rho$ being the regular representaion, it was proved in \cite{BenameurRoyI} and \cite{BenameurRoyII} that we can insure the following properties at once:}}
\begin{enumerate}
\item The $K$-theory groups of the three Roe algebras are {{independent}} of the choice of the non-degenerate representation $\pi$ in $C(X)\otimes H$;
\item The Paschke-Higson maps $\maP_i^{Z,{\what H}}$ are isomorphisms;
\item the $C^*$-algebra $C^*_\Gamma (X; Z, {\what H})$ is Morita equivalent to the reduced $C^*$-algebra $C^*_r (\maG)$;  and
\item the boundary maps $\pa_i$ coincide, via the above two isomorphisms,  with the (reduced) Baum-Connes maps for $\Gamma$ with coefficients in $C(X)$, or equivalently with  the Baum-Connes maps for our transformation groupoid $\maG$. 
\end{enumerate}

We thus end up with the six-term exact sequence
$$
\begin{diagram}
       \node{K_0(C^*_r\maG)}\arrow{e}\node{K_0 (D^*_\Gamma (X; Z))}\arrow{e}\node{KK_\Gamma^1(Z, X)}\arrow{s}\\
\node{KK_\Gamma^0(Z, X)}\arrow{n}\node{K_1(D^*_\Gamma (X; Z))}\arrow{w} \node{K_1(C^*_r\maG)}\arrow{w}
\end{diagram} 
$$
with vertical maps given by the Baum-Connes maps associated with  the proper cocompact $\Gamma$-space $Z$, with coefficients in $C(X)$. We have removed $\what{H}$ from the notation since the isomorphism class of the group $K_i (D^*_\Gamma (X; Z, \what{H}))$ does not depend of $\pi$. 

In \cite{BenameurRoyII}, the functoriality of these construction with respect to inclusion of closed subsets was proved, giving  the universal sequence
$$
 \begin{diagram}
      \node{K_0(C^*_r\maG)}\arrow{e}\node{\maS^r_1(\maG)}\arrow{e}\node{\maR K_1(\maG)}\arrow{s}\\
\node{\maR K^0(\maG)}\arrow{n}\node{\maS^r_0(\maG)}\arrow{w} \node{K_1(C^*_r\maG)}\arrow{w}
\end{diagram}
$$
Here $\maR K^i(\maG)$ is the LHS of the Baum-Connes assembly map for the groupoid $\maG$, while $\maS^r_i(\maG)$ are universal reduced structure groups associated with the {{\'etale}} groupoid $\maG=X\rtimes \Gamma$ and constructed as inductive limits exactly as $\maR K^i(\maG)$, see for instance \cite{BaumConnesHigson} or the review given in \cite{BenameurRoyII}. 

It is the aim of the next section to set up a similar six-term exact sequence involving other admissible crossed product completions, especially the maximal one. 


\subsection{Admissible completions}

The space  $C_c(\maG)$ of compactly supported continuous functions, on the space of arrows, is an involutive convolution algebra for the following rules

$$(f\ast g)(x,\gamma)=\sum_{\mu\in\Gamma}f(x,\mu)g(\mu^{-1} x,\mu^{-1}\gamma)
\mbox{  and  }
f^*(x,\gamma)=\overline{f(\gamma^{-1} x,\gamma^{-1})}, \quad \text{ for } f, g\in C_c(\maG).
$$
Let as usual  $L^1(\maG)$ be the completion of $C_c(\maG)$ with respect to the Banach norm $\Vert\bullet \Vert_1$ defined by
$$
\Vert f\Vert_1 := \max\left(\sup_{x\in X}\left(\sum_{\gamma\in\Gamma}\vert f(x,\gamma)\vert\right), \sup_{x\in X}\left(\sum_{\gamma\in\Gamma}\vert f(\gamma^{-1} x,\gamma^{-1})\vert\right)\right).
$$
Then the maximal norm  $\Vert \bullet \Vert_{\max}$ on $L^1(\maG)$ is defined as
$$
\Vert f\Vert_{\max} := \sup\lbrace \Vert\pi(f)\Vert,\: \pi: L^1(\maG)\rightarrow\maB(H) \mbox{ a continuous involutive representation}\rbrace.
$$
The (maximal or full) $C^*$-algebra of the groupoid $\maG$ is the completion of $L^1(\maG)$ with respect to the maximal norm. We denote it by $C_{\max}^*(\maG)$. 


An important  continuous and faithful representation of $L^1(\maG)$ is the regular left  representation $\lambda$ defined as follows, see \cite{RenaultBook}.
For  $x\in X$, let $\lambda_x: C_c(\maG)\rightarrow B(\ell^2\Gamma)$ be the representation at $x$ given by 
$$
\lambda_x(f)(\xi)(\gamma)=\sum_{\mu\in\Gamma}f(\gamma x,\gamma\mu^{-1})\xi(\mu).
$$
Rather completing $L^1(\maG)$ with respect to the resulting regular norm $
\Vert \bullet\Vert := \sup_{x\in X}\Vert \lambda_x(\bullet)\Vert$,
we  obtain the regular, or reduced, $C^*$-algebra associated with the \'etale groupoid $\maG$, it will be denoted  $C^*_r(\maG)$. We thus have by definition of the maximal $C^*$-algebra, a well defined epimorphism  extending $\lambda$:
$$
\lambda: C_{\max}^*(\maG)\longrightarrow C^*_r(\maG).
$$
According to \cite{BaumGuentnerWillett}, we may as well use any completion $C^* (\maG)$ between the previous two completions, so such that we have morphisms
$$
C_{\max}^*(\maG) \longrightarrow C^* (\maG)\longrightarrow C^*_r(\maG).
$$
So with respect to  a $C^*$-norm $\vert\vert \bullet\vert\vert$ on $C_c(\maG)$ which satisfies $
\vert\vert \bullet\vert\vert_r\leq \vert\vert \bullet\vert\vert \leq \vert\vert \bullet\vert\vert_{\max}$.

\vspace{1\baselineskip}

Notice that the previous $C^*$-algebras of $\maG$ are easily identified with crossed product completions for the action of $\Gamma$ on $C(X)$. More generally, we shall also consider other admissible crossed products, and also  strongly continuous actions of $\Gamma$ on some $C^*$-algebras. A  $\Gamma$-algebra $A$ is a $C^*$-algebra  endowed with a (left) action of our discrete countable group $\Gamma$, the precise action being implicit. 
 Given a unitary $\Gamma$-Hilbert space $H$, say a Hilbert space with a unitary representation $U : \Gamma\to\maU(H)$ of $\Gamma$,  and an involutive representation $\pi :A\to\maL(H)$, we shall call the pair $(\pi, U)$ a covariant representation of $(A, \Gamma)$ if the following covariance relations hold
$$
\pi(\gamma a)=U_\gamma\pi(a)U_\gamma^*,  \quad \forall (a, \gamma) \in A \times \Gamma.
$$
Given such a covariant representation, the space $C_c(\Gamma, A)$ of finitely supported functions from $\Gamma$ to $A$, has the structure of an involutive convolution algebra given as follows:
$$
(f*g)(\gamma)=\sum_{\mu\in\Gamma}f(\mu)\, \mu g(\mu^{-1}\gamma) \mbox{  and  }
f^*(\gamma)=\gamma f(\gamma^{-1})^*, \quad \text{ for }f, g\in C_c(\Gamma, A)\text{ and }\gamma\in \Gamma.
$$
We then obtain a well defined involutive representation  of the algebra $C_c(\Gamma, A)$ that we shall denote by $\pi\rtimes U:C_c(\Gamma, A) \rightarrow \maL (H)$, and which is defined by (see for instance \cite{Williams}[Proposition 2.23]): 
$$
(\pi\rtimes U)(f)=\sum_{\mu\in\Gamma}\pi(f(\gamma))\circ U_\gamma, \quad  \text{ for  } f\in C_c(\Gamma,A).
$$
 The maximal completion with respect to the system of all such covariant representations is the maximal (also called  full) crossed product $C^*$-algebra, it will be denoted $A\rtimes_{\max}\Gamma$. So for $f\in C_c(\Gamma, A)$, the formula
$$
\Vert f\Vert_{\max} := \sup\lbrace\Vert(\pi\rtimes U)(f)\Vert,\:(\pi,U)
\mbox{ a covariant representation of } (A,\Gamma)\rbrace,
$$
defines a $C^*$-norm, see for instance \cite{Williams}[Lemma 2.27]. Then $A\rtimes_{\max}\Gamma$ is just the completion of $C_c(\Gamma, A)$ with respect to this norm.

As in \cite{BaumGuentnerWillett}, we may consider any admissible crossed product. Recall that a  crossed product $\rtimes \Gamma$ is a functor from the category of $\Gamma$-algebras to that of $C^*$-algebras satisfying some properties. In particular, we shall only consider those crossed products which for any $\Gamma$-algebra $A$, yield a $C^*$-algebra $A\rtimes \Gamma$ which sits between the maximal crossed product $A\rtimes_{\max} \Gamma$ and the reduced crossed product $A\rtimes_r\Gamma$, so with natural epimorphic transformations 
$$
A\rtimes_{\max} \Gamma \longrightarrow A\rtimes \Gamma \longrightarrow A\rtimes_r \Gamma.
$$
For examples of exotic crossed products $\rtimes$, meaning strictly sitting between the reduced and the maximal ones, we refer the reader for instance to \cite{BaumGuentnerWillett}, see also \cite{EchterhoffWillett}. Considering any admissible space $\maR$ of unitary equivalence classes of such representations, one may introduce interesting examples of exotic crossed products $A\rtimes \Gamma$, see \cite{BrownGuentner}. 

The crossed product functor $\rtimes\Gamma$ is compatible with Morita equivalence if for any separable $\Gamma$-algebra $A$, the well defined maximal untwisting isomorphism 
$$
[A\otimes \maK (\ell^2(\Gamma)^\infty)]\rtimes_{\max} \Gamma \longrightarrow (A\rtimes_{\max}\Gamma)\otimes \maK (\ell^2(\Gamma)^\infty),
$$
descends to an isomorphism for $\rtimes\Gamma$, see again \cite{BaumGuentnerWillett}[Definition 3.2].  Thus, such crossed product functor transforms $\Gamma$-Morita equivalent  separable $\Gamma$-algebras to Morita equivalent $C^*$-algebras. Indeed, if $A$ and $B$ are Morita equivalent separable $\Gamma$-algebras, then equivariant Morita equivalence always yields an isomorphism
$$
[A\otimes \maK (\ell^2(\Gamma)^\infty)]\rtimes  \Gamma \simeq [B\otimes \maK (\ell^2(\Gamma)^\infty)]\rtimes  \Gamma.
$$
Therefore
$$
(A\rtimes\Gamma)\otimes \maK (\ell^2(\Gamma)^\infty) \simeq [A\otimes \maK (\ell^2(\Gamma)^\infty)]\rtimes \Gamma \simeq [B\otimes \maK (\ell^2(\Gamma)^\infty)]\rtimes  \Gamma \simeq (B\rtimes\Gamma)\otimes \maK (\ell^2(\Gamma)^\infty).
$$
and we see that $A\rtimes\Gamma$ and $B\rtimes\Gamma$ are Morita equivalent. Notice that  the regular  crossed product is Morita compatible, see also  \cite{BaumGuentnerWillett}[Appendix A]. 
Exact crossed product have better behaviour with respect to the Baum-Connes assembly map, so our constructions are especially interesting for them. It was  proved in \cite{BaumGuentnerWillett} that there always  exists a minimal exact admissible crossed product $\rtimes_{\ep} \Gamma$, and which coincides with the regular one precisely when the group is exact. Recall that exactness  means that for any short exact sequence of $\Gamma$-algebras
$$
0\to I\hookrightarrow A \rightarrow A/I \to 0,
$$
the corresponding sequence of $C^*$-algebras
$$
0\to I\rtimes\Gamma \hookrightarrow A\rtimes\Gamma \rightarrow (A/I)\rtimes\Gamma \to 0,
$$
is also exact. The maximal crossed product is for instance exact, while the reduced is not. In fact the reduced crossed product is exact if and only if the  reduced $C^*$-algebra $C^*_r(\Gamma)$ is exact \cite{KirchbergWassermann}. 

\begin{remark}
As explained in the introduction, the privileged crossed product for the rigidity applications of rho invariants will be the maximal crossed product. {{Therefore, the}} reader can suppose in first reading and for simplicity that all the admissible crossed products considered in the sequel coincide with  the maximal one. 
\end{remark}

\vspace{1\baselineskip}

\subsection{Generalized Connes-Skandalis module}

Let $Z$ be a proper cocompact right $\Gamma$-space as before. We fix  as before a non-degenerate $C(X)$-representation $\pi: C_0(Z\times X)\to\maL_{C(X)}(C(X)\otimes H)$. Notice that for  $g\in C_0(Z\times X)$, the operator $\pi(g)$ can be identified with a  $^*$-strongly continuous field $(\pi_x(g))_{x\in X}$ and moreover each $\pi_x:C_0(Z\times X)\to\maL(H)$ is determined by its restriction to $C_0(Z)$, still denoted $\pi_x:C_0(Z)\to \maL (H)$. 
We also fix a unitary representation $U :\Gamma\to\maU(H)$, and we suppose that $\pi$ is $\Gamma$-equivariant, that is for all $g\in C_0(Z)$,
$$
\pi_{\gamma x}(g)=U_\gamma\pi_x(\gamma^{-1} g)U_{\gamma^{-1}}\quad  \text{ where } \quad (\gamma g)(z)=g(z\gamma).
$$

We proceed now to associate with these data  a  Hilbert module $\maE^H$ over the $C^*$-algebra $C^*(\maG)$ which is the natural generalisation of the Connes-Skandalis Hilbert module, see for instance \cite{BenameurPiazza}. We first define the Hilbert module $\maE^H$ as the completion of the pre-Hilbert module over the algebra $C_c(X\times\Gamma)$ defined by  $\maE_c^H:=\pi(C_c(Z\times X))(C(X)\otimes H)$. 

\begin{lemma}
The space $\maE^H_c$ is a right pre-Hilbert module over  the convolution algebra $C_c(X\times\Gamma)$ for the rules
$$
\quad {{(\xi_1 f)(x)=\sum_{\gamma\in\Gamma}f(\gamma x,\gamma)U_{\gamma^{-1}}\xi_1(\gamma x)\; \text{ and }\;
\langle\xi_1,\eta_1\rangle (x,\gamma)=\langle \xi_1(x),U_\gamma\eta_1(\gamma^{-1} x)\rangle}},
$$
where {{$\xi_1, \eta_1\in\maE_c^H$}} and $f\in C_c(X\times\Gamma)$.
\end{lemma}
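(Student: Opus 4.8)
The plan is to verify directly that the stated formulas equip $\maE_c^H$ with the structure of a right pre-Hilbert module over $C_c(X\times\Gamma)$, checking the algebraic axioms first and positivity last. First I would check that the action $\xi\mapsto\xi f$ is well defined, i.e. that $\xi f$ again lies in $\maE_c^H=\pi(C_c(Z\times X))(C(X)\otimes H)$: since each $\pi$ is a $C(X)$-representation and the coefficient functions $x\mapsto f(\gamma x,\gamma)$ are continuous on $X$, the sum over $\gamma$ is locally finite (only finitely many $\gamma$ meet the compact support of $f$ in the $\Gamma$-direction), so $\xi f$ is a finite sum of terms of the form $(\text{continuous scalar on }X)\cdot U_{\gamma^{-1}}\xi(\gamma\,\cdot)$, which still belongs to $\maE_c^H$ using the $\Gamma$-equivariance of $\pi$. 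Next I would check the module axiom $(\xi f)g=\xi(f*g)$ by expanding both sides: the left side gives a double sum over $\gamma,\mu$ of $g(\gamma x,\gamma)\,f(\mu\gamma x,\mu)\,U_{\gamma^{-1}}U_{\mu^{-1}}\xi(\mu\gamma x)$, and after the substitution $\nu=\mu\gamma$ this matches $\sum_\nu (f*g)(\nu x,\nu)U_{\nu^{-1}}\xi(\nu x)$ once one unwinds the convolution rule $(f*g)(x,\nu)=\sum_\mu f(x,\mu)g(\mu^{-1}x,\mu^{-1}\nu)$; this is a routine reindexing that I would present in one display.

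Then I would turn to the $C_c(X\times\Gamma)$-valued inner product $\langle\xi,\eta\rangle(x,\gamma)=\langle\xi(x),U_\gamma\eta(\gamma^{-1}x)\rangle$. I would check: (i) it takes values in $C_c(X\times\Gamma)$ — continuity in $x$ is clear from $\ast$-strong continuity of the fields, and compact support in the $\Gamma$-direction follows because $Z$ is cocompact and $\xi,\eta$ are compactly supported in the $Z$-variable through $\pi$, so only finitely many translates overlap; (ii) conjugate-symmetry $\langle\xi,\eta\rangle^*=\langle\eta,\xi\rangle$, which is the computation $\overline{\langle\xi,\eta\rangle(\gamma^{-1}x,\gamma^{-1})}=\overline{\langle\xi(\gamma^{-1}x),U_{\gamma^{-1}}\eta(\gamma x)\rangle}=\langle\eta(\gamma x),U_\gamma\xi(\gamma^{-1}x)\cdot\rangle$ after using unitarity of $U_\gamma$ and the involution formula $f^*(x,\gamma)=\overline{f(\gamma^{-1}x,\gamma^{-1})}$; (iii) right $C_c(X\times\Gamma)$-linearity $\langle\xi,\eta f\rangle=\langle\xi,\eta\rangle*f$, again a reindexing of a double sum; and (iv) positivity of $\langle\xi,\xi\rangle$ as an element of the $C^*$-completion $C^*(\maG)$ of $C_c(\maG)$. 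For (iv), the clean argument is to exhibit $\langle\xi,\xi\rangle$ as $\langle\xi,\xi\rangle=\Phi(\xi)^*\Phi(\xi)$ for a suitable map into an honest Hilbert module, or more concretely to use the regular representations $\lambda_x$: for each $x\in X$ one pairs into $\ell^2(\Gamma)\otimes H$ and computes that $\lambda_x(\langle\xi,\xi\rangle)$ is a positive operator, which gives positivity in $C^*_r(\maG)$ and hence (pulling back along $C^*(\maG)\to C^*_r(\maG)$, together with the fact that $C_c(\maG)$ is dense and the involution-preserving structure already checked) positivity in the chosen admissible completion $C^*(\maG)$; here one uses that $\lambda$ is faithful on $C^*_r(\maG)$.

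The main obstacle I anticipate is precisely step (iv), positivity, because the inner product a priori only lands in the purely algebraic convolution algebra $C_c(\maG)$ and one must make sense of positivity in whatever admissible completion $C^*(\maG)$ has been fixed; the subtlety is that positivity is easiest to see in the reduced completion (via the $\lambda_x$), and one needs the general nonsense that a self-adjoint element of $C_c(\maG)$ which maps to a positive element of $C^*_r(\maG)$ and whose class is already known to behave like $b^*b$ is positive in $C^*(\maG)$ as well — or, more robustly, one realizes $\maE_c^H$ as (a corner of) an external tensor product / composition module where positivity is automatic from the Hilbert-module machinery of \cite{Lance}. The remaining verifications — well-definedness of the action, associativity with the convolution product, bi-additivity, and conjugate symmetry — are all direct index manipulations using the stated formulas for $\ast$, $f^*$ and the $\Gamma$-equivariance of $\pi$, and I would carry them out in that order, deferring the completion/positivity discussion to the end.
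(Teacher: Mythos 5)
Your proposal covers the same ground as the paper's proof and more: the paper only verifies the two nontrivial points, namely that $\xi f$ stays in $\maE_c^H$ (via the $\Gamma$-equivariance identity $U_{\gamma^{-1}}\circ\pi_{\gamma x}(g_{\gamma x})=\pi_x(\gamma^{-1}g_{\gamma x})\circ U_{\gamma^{-1}}$) and that $\langle\pi(g)\xi,\pi(h)\eta\rangle$ lands in $C_c(X\times\Gamma)$, and then declares the remaining algebraic axioms a straightforward verification. One small correction: the compact support of $\langle\xi,\eta\rangle$ in the $\Gamma$-direction comes from \emph{properness} of the $\Gamma$-action on $Z$ (only finitely many $\gamma$ satisfy $K\gamma\cap K\neq\emptyset$ for $K$ compact), not from cocompactness; you state the right mechanism but attribute it to the wrong hypothesis.

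More substantively, your primary route to positivity runs in the wrong direction. Verifying $\lambda_x(\langle\xi,\xi\rangle)\geq 0$ for all $x$ establishes positivity only in $C^*_r(\maG)$, and positivity does not pull back along the (generally non-injective) quotient map $C^*(\maG)\to C^*_r(\maG)$: a self-adjoint element of $C_c(\maG)$ can be positive in the reduced completion without being positive in an exotic or maximal one. The ``general nonsense'' you invoke is therefore not available. The robust alternative you mention in passing is the correct argument and is the one the paper effectively uses: the map $\phi$ of Lemma~\ref{Smough2} identifies $\maE_c^H$ isometrically with the range of the Mishchenko projection $(\pi\rtimes\Gamma)(e)$ inside the genuine Hilbert $C^*(\maG)$-module $(C(X)\otimes H)\rtimes\Gamma$, so that $\langle\xi,\xi\rangle=\langle\phi(\xi),\phi(\xi)\rangle$ is positive in $C^*_{\max}(\maG)$ and hence, pushing forward along the $*$-homomorphisms $C^*_{\max}(\maG)\to C^*(\maG)$, in every admissible completion. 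That realization should be promoted from fallback to the main positivity argument; with that change the proof is complete.
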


\begin{proof}\ 
We have for  $g\in C_c(Z\times X)$ and {{$\xi\in C(X)\otimes H$}} with the usual notation $(\gamma g)(z,x)=g(z \gamma,\gamma^{-1} x)$:
$$
{{[\pi(g)(\xi)] f \,}}(x) = \sum_{\gamma\in\Gamma}f(\gamma x,\gamma)U_{\gamma^{-1}}\pi_{\gamma x}(g_{\gamma x})\xi(\gamma x)
$$
But since $\pi$ is $\Gamma$-equivariant, we have $U_{\gamma^{-1}}\circ \pi_{\gamma x}(g_{\gamma x})= \pi_x(\gamma^{-1} g_{\gamma x})\circ U_{\gamma^{-1}}$. Hence we get
$$
{{[\pi(g)(\xi)] f \,}}(x) = \sum_{\gamma\in\Gamma}\pi_x((\gamma^{-1} g)_x)f(\gamma x,\gamma)U_{\gamma^{-1}}\xi(\gamma x) {{=\pi (g) (\xi f) (x)}}.
$$
Hence,  {{with $\xi_1= \pi(g)(\xi)$, we deduce that $\xi_1 f \in\maE_c^H$, and finally we conclude that $\xi_1 f\in \maE_c^H$ for any $\xi_1\in\maE_c^H$}}. Moreover we can compute
\begin{eqnarray*}
\langle\pi(g)\xi,\pi(h)\eta\rangle (x,\gamma) &=& \langle\pi_x(g_x)\xi(x),U_\gamma\pi_{\gamma^{-1} x}(h_{\gamma^{-1} x})\eta(\gamma^{-1} x)\rangle\\
&=& \langle \xi(x),\pi_x({\overline{g_x}} (\gamma h_{\gamma^{-1} x}))U_\gamma\eta(\gamma^{-1}x)\rangle.
\end{eqnarray*}
Hence, $\langle \pi(g)\xi,\pi(h)\eta\rangle \in C_c(X\times\Gamma)$ since $g$ and $h$ are compactly supported and since $\Gamma$ acts properly on the locally compact space $Z$.

That these operations endow $\maE_c^H$ with the structure of a pre-Hilbert module over the involutive algebra $C_c(X\times\Gamma)$ is then a straightforward verification, see for instance \cite{VictorThesis}.

\end{proof}

\begin{definition}[The Connes-Skandalis module]\label{CS-HilbertModule}\ The completion of the  pre-Hilbert  $C_c(X\times\Gamma)$-module $\maE_c^H$  with respect to the $C^*$-norm of $C^*(\maG)$ yields a right Hilbert $C^*$-module over the $C^*$-algebra $C^*(\maG)\simeq C(X)\rtimes \Gamma$. It will be denoted  $\maE^H$ and sometimes called the  Connes-Skandalis Hilbert module associated with $H$ and with the fixed completion $C^*$-algebra $C^*(\maG)$. 
\end{definition}
\medskip

When the completion is taken with respect to the maximal crossed product, 
we denote by $\maE_{\max}^H$ the resulting maximal Connes-Skandalis Hilbert module over $C^*_{\max} (\maG)$.  In the same way, $\maE^H_{\ep}$ will be the Connes-Skandalis Hilbert module over the minimal exact  admissible crossed product $C^*$-algebra $C^*_{\ep} (\maG)=C(X)\rtimes_{\ep}\Gamma$ introduced in \cite{BaumGuentnerWillett}. Finally, the regular Connes-Skandalis Hilbert module $\maE_r^H$ is the module over the $C^*$-algebra $C(X)\rtimes_r \Gamma$ obtained using the regular completion. 

\begin{example}
The terminology {\em{Connes-Skandalis}} Hilbert module is motivated by the special case $H=L^2(Z)$ for some (fully supported if we impose faithfulness) $\Gamma$-invariant Borel measure on $Z$ where we  recover the usual definition of   Connes-Skandalis Hilbert module as introduced in \cite{BenameurPiazza}, following the more general definition given for foliation groupoids in \cite{ConnesSkandalis} in the reduced case.  A similar construction was also carried out in \cite{Renault87}, see in particular Corollary 5.2 there. 
\end{example}

\color{black}

\begin{example}\label{SmoothCSmodule}\
Assume that $Z:=\tM\to M$ is some Galois covering over the smooth closed manifold $M$ with group $\Gamma$, and let $\tE\to \tM$ be the lift to $\tM$ of some hermitian vector bundle over the quotient smooth compact foliated manifold $M$.  Then using a $\Gamma$-invariant  Lebesgue measure on $\tM$, and the corresponding Hilbert space $H=L^2(\tM, \tE)$ of classes of $L^2$-sections of $\tE$ over $\tM$,  we get  the standard Connes-Skandalis Hilbert module \cite{BenameurPiazza}. 
\end{example}

 The above construction of the Hilbert module $\maE^H$ can be generalized in many directions. For instance, one may consider instead of the free Hilbert module $C(X)\otimes H$, any  $\Gamma$-equivariant continuous field of Hilbert spaces over $X$ endowed with a $\Gamma$-equivariant representation of $C_0(Z\times X)$. It is also a routine exercise to extend the previous definitions to the case of more general $\maG$-proper and cocompact spaces $(Y, \rho:Y\to X)$ where $\rho$ is the $\Gamma$-equivariant anchor map generalizing the second projection $Z\times X\to X$. In the prototype example of Galois coverings of smooth manifolds, this means working with more general closed foliated manifolds and more general hermitian vector bundles. These extensions would though introduce some extra-technicalities and are not needed for the purpose of the present  paper.

In the rest of this section, we give other descriptions of the Hilbert module $\maE^H$ which will also be needed in the next sections.

Recall first  the crossed product Hilbert module $(C(X)\otimes H)\rtimes \Gamma$ over $C^*(\maG)$, which is the completion of the $C_c(X\times\Gamma)$ pre-Hilbert module $C_c(X\times\Gamma, H)$ for the fixed admissible $C^*$-norm. More precisely, the right module structure is given by the formula
$$
(\xi f)(x, \gamma)=\sum_{\mu\in\Gamma}\xi(x, \mu)f(\mu^{-1} x,\mu^{-1}\gamma), \; \text{ for }\xi\in C_c(X\times \Gamma,H), f\in C_c(X\times\Gamma).
$$
The  $C_c(X\times\Gamma)$-valued inner product is given for $\xi, \eta\in C_c(X\times \Gamma,H)$ by
$$
\langle \xi,\eta\rangle (x,\gamma)=\sum_{\mu\in\Gamma}\langle \xi(\mu x,\mu),\eta(\mu x, \mu\gamma)\rangle.
$$
The completion of $C_c(X\times \Gamma, H)$  is then a Hilbert $C^*(\maG)$-module, that we shall denote by $(C(X)\otimes H)\rtimes\Gamma$. The completion of $C_c(X\times \Gamma, H)$ with respect to the maximal (resp. regular) norm will be denoted $(C(X)\otimes H)\rtimes_{\max}\Gamma$ (resp. $(C(X)\otimes H)\rtimes_r\Gamma$). In the same way, we have the Hilbert module $(C(X)\otimes H)\rtimes_{\ep}\Gamma$ over $C(X)\rtimes_{\ep} \Gamma$. 

\begin{remark}\label{rtimes=otimes}\ 
It is easy to check that this Hilbert $C^*(\maG)$-module is isomorphic to $C^*(\maG)\otimes H$, and hence also to the composition Hilbert module $(C(X)\otimes H)\otimes_{C(X)} C^*(\maG)$, where $C(X)$ acts by left multiplication as a subalgebra of $C^*(\maG)$.
\end{remark}

\medskip

The following lemma will be needed later on. Recall that for $\gamma\in \Gamma$, the characteristic function of $\{\gamma\}$ is denoted $\delta_\gamma:\Gamma\to \{0,1\}$.

\begin{proposition}\label{Descente}\ Denote for $\alpha\in \Gamma$ by $i_\alpha^H: C(X, H)\rightarrow C_c(X\times\Gamma, H)$ and $p_\alpha^H: C_c(X\times\Gamma, H)\rightarrow C(X, H)$ the linear maps defined by
$
i_\alpha^H (\xi) := \xi\otimes\delta_\alpha$ and $p_\alpha^H (\what \xi): x\longmapsto \what\xi (x, \alpha)$. 
Then 
\begin{enumerate}
\item The maps $i_\alpha^H$ and $p_\alpha^H$ extend to bounded operators between the completions 
$$
i_\alpha^H: C(X)\otimes H \rightarrow (C(X)\otimes H)\rtimes\Gamma\quad \text{ and } \quad p_\alpha^H:(C(X)\otimes H)\rtimes\Gamma\rightarrow C(X)\otimes H,
$$
such that $i_\alpha^H$ is an isometry and $p_\alpha^H$ is a contraction.
\item For any adjointable operator ${\what T}\in \maL_{C^*(\maG)} ((C(X)\otimes H)\rtimes\Gamma)$, the operator $T:=p_{1_\Gamma}^H\circ {\what T}\circ i_{1_\Gamma}^H$ is an adjointable operator, i.e. belongs to $\maL_{C(X)} (C(X)\otimes H)$. Moreover, if ${\what T}$ is compact then so is $T$.
\end{enumerate}
\end{proposition}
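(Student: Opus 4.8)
The plan is to work directly with the explicit module structures above, reducing part (1) to the case $\alpha = 1_\Gamma$ and then handling part (2) by exhibiting the adjoint and a compact approximation by hand.

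First I would treat part (1). Since $\delta_\alpha$ is a unitary multiplier of $C^*(\maG)$, right multiplication by $\delta_\alpha$, which I write $R_{\delta_\alpha}$, is an isometric automorphism of the Hilbert $C^*(\maG)$-module $(C(X)\otimes H)\rtimes\Gamma$; and a one-line verification with the module-action formula gives $i_\alpha^H = R_{\delta_\alpha}\circ i_{1_\Gamma}^H$ and $p_\alpha^H = p_{1_\Gamma}^H\circ R_{\delta_{\alpha^{-1}}}$, so it is enough to deal with $\alpha = 1_\Gamma$. For $i_{1_\Gamma}^H$, computing the inner product of $(C(X)\otimes H)\rtimes\Gamma$ on the elements $\xi\otimes\delta_{1_\Gamma}$, $\eta\otimes\delta_{1_\Gamma}$ produces
$$
\langle i_{1_\Gamma}^H\xi,\, i_{1_\Gamma}^H\eta\rangle(x,\gamma) = \delta_{1_\Gamma}(\gamma)\,\langle\xi(x),\eta(x)\rangle_H ,
$$
that is, $\langle i_{1_\Gamma}^H\xi,\, i_{1_\Gamma}^H\eta\rangle = \langle\xi,\eta\rangle_{C(X)}\otimes\delta_{1_\Gamma}$, which lies in the isometrically embedded copy of $C(X)$ inside $C^*(\maG)$ (the map $g\mapsto g\otimes\delta_{1_\Gamma}$ being an injective $^*$-homomorphism of $C^*$-algebras, hence isometric); therefore $\|i_{1_\Gamma}^H\xi\| = \|\xi\|$ and $i_{1_\Gamma}^H$ is an isometry. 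For $p_{1_\Gamma}^H$, I would bring in the canonical ``evaluation at the unit'' map $E\colon C^*(\maG)\to C(X)$, which on $C_c(X\times\Gamma)$ is $E(f)(x) = f(x,1_\Gamma)$; it is contractive because $f(x,1_\Gamma) = \langle\lambda_x(f)\delta_{1_\Gamma},\delta_{1_\Gamma}\rangle$ is a diagonal matrix coefficient of the regular representation, so $|E(f)(x)| \le \|\lambda_x(f)\| \le \|f\|_r \le \|f\|_{C^*(\maG)}$, uniformly in $x$. Keeping only the term $\mu = 1_\Gamma$ in the identity $\langle\what\xi,\what\xi\rangle(x,1_\Gamma) = \sum_{\mu\in\Gamma}\|\what\xi(\mu x,\mu)\|_H^2$ gives $\|\what\xi(x,1_\Gamma)\|_H^2 \le E(\langle\what\xi,\what\xi\rangle)(x)$, whence
$$
\|p_{1_\Gamma}^H\what\xi\|^2 = \sup_{x\in X}\|\what\xi(x,1_\Gamma)\|_H^2 \le \|E(\langle\what\xi,\what\xi\rangle)\|_\infty \le \|\langle\what\xi,\what\xi\rangle\| = \|\what\xi\|^2 ,
$$
so that $p_{1_\Gamma}^H$ is a contraction and extends to the completion.

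For the adjointability in part (2), the key ingredients are the two compatibility identities obtained by evaluating the $C^*(\maG)$-valued inner products at $\gamma = 1_\Gamma$: for $\what\zeta \in (C(X)\otimes H)\rtimes\Gamma$ and $\eta \in C(X)\otimes H$,
$$
\langle p_{1_\Gamma}^H\what\zeta,\,\eta\rangle_{C(X)} = E\big(\langle\what\zeta,\, i_{1_\Gamma}^H\eta\rangle_{C^*(\maG)}\big), \qquad \langle\eta,\, p_{1_\Gamma}^H\what\zeta\rangle_{C(X)} = E\big(\langle i_{1_\Gamma}^H\eta,\,\what\zeta\rangle_{C^*(\maG)}\big),
$$
both immediate from the explicit formulas for $i_{1_\Gamma}^H$, $p_{1_\Gamma}^H$ and for the two inner products. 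Applying the first identity with $\what\zeta = \what T\, i_{1_\Gamma}^H\xi$, then the adjointability of $\what T$, then the second identity, yields for $\xi,\eta$ in the dense subspace $C(X,H)$
$$
\langle T\xi,\eta\rangle_{C(X)} = E\big(\langle i_{1_\Gamma}^H\xi,\, \what T^* i_{1_\Gamma}^H\eta\rangle_{C^*(\maG)}\big) = \langle\xi,\, p_{1_\Gamma}^H\what T^* i_{1_\Gamma}^H\eta\rangle_{C(X)} ,
$$
so $T$ is adjointable with $T^* = p_{1_\Gamma}^H\circ\what T^*\circ i_{1_\Gamma}^H$. (Conceptually, under the identification of Remark \ref{rtimes=otimes}, $i_{1_\Gamma}^H$ and $p_{1_\Gamma}^H$ are the maps on $(C(X)\otimes H)\otimes_{C(X)}(-)$ induced respectively by the inclusion $C(X)\hookrightarrow C^*(\maG)$ and by $E$.) For the compactness statement, since $\what T \mapsto p_{1_\Gamma}^H\what T i_{1_\Gamma}^H$ is linear and contractive by part (1) and $\maK_{C(X)}(C(X)\otimes H)$ is norm-closed, it suffices to treat $\what T = \theta_{\what\zeta,\what\eta}$ with $\what\zeta,\what\eta \in C_c(X\times\Gamma,H)$, whose linear span is dense in $\maK_{C^*(\maG)}\big((C(X)\otimes H)\rtimes\Gamma\big)$. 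Then $T\xi = p_{1_\Gamma}^H\big(\what\zeta\cdot\langle\what\eta,\, i_{1_\Gamma}^H\xi\rangle_{C^*(\maG)}\big)$, and a short computation gives the closed formula
$$
(T\xi)(x) = \sum_{\mu\in\Gamma}\what\zeta(x,\mu)\,\langle\what\eta(x,\mu),\xi(x)\rangle_H = \Big(\sum_{\mu\in\Gamma}\theta_{\what\zeta(x,\mu),\,\what\eta(x,\mu)}\Big)\big(\xi(x)\big) ,
$$
the sum over $\mu$ being finite since $\what\zeta$ and $\what\eta$ have compact support in the $\Gamma$-direction. Hence $T$ is the norm-continuous field $\big(x \mapsto \sum_{\mu}\theta_{\what\zeta(x,\mu),\,\what\eta(x,\mu)}\big)_{x\in X}$ of finite-rank operators on $H$, which lies in $C(X,\maK(H)) \cong \maK_{C(X)}(C(X)\otimes H)$ by Example \ref{AH}; the general case follows by density and closedness.

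The arguments are routine; I do not expect a serious obstacle. The only points demanding care are the bookkeeping of the several occurrences of the $\Gamma$-action and of the conjugation in the various inner products, and checking that the contraction estimate for $p_{1_\Gamma}^H$ holds uniformly over all admissible completions --- which is precisely why one passes through the reduced norm.
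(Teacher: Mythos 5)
Your proof is correct and follows essentially the same route as the paper's: isometry of $i^H_{1_\Gamma}$ via the embedded copy of $C(X)$, the contraction estimate for $p^H_{1_\Gamma}$ obtained by passing through the reduced norm, the adjoint identity $T^*=p^H_{1_\Gamma}\circ\what T^*\circ i^H_{1_\Gamma}$ from the same two inner-product compatibilities, and compactness by reducing to rank-one operators $\theta_{\what\zeta,\what\eta}$ with compactly supported entries. The only (harmless) cosmetic differences are your reduction to $\alpha=1_\Gamma$ via right multiplication by the unitary $1\otimes\delta_\alpha$ and your packaging of the pointwise estimate through the evaluation-at-the-unit map $E$, where the paper instead extracts the relevant term directly from $\Vert\lambda_x\langle\what\xi,\what\xi\rangle(\delta_\alpha)\Vert_{\ell^2(\Gamma)}$.
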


\begin{proof}\ 

\begin{enumerate}
\item The maps $i_\alpha^H$ and $p_\alpha^H$ are obviously linear. We shall  denote by $i_\alpha$ and $p_\alpha$ the same maps but for $H=\C$. For $\xi\in C(X, H)$ and $\what\xi\in C_c(X\times \Gamma, H)$, we have
$$
\langle i_\alpha^H (\xi) , i_\alpha^H(\xi)\rangle = i_{1_\Gamma} \left( \langle \xi (\alpha\bullet), \xi (\alpha\bullet) \rangle\right) \text{ and } \langle p_\alpha^H({\what\xi}), p_\alpha^H({\what\xi})\rangle (x)= \langle {\what\xi} (x, \alpha),{\what\xi} (x, \alpha)\rangle.
$$
Since $i_{1_\Gamma}$ is an isometry for any choice of admissible crossed product, we deduce that $i_\alpha^H$ extends to  a linear isometry $i_\alpha^H: C(X)\otimes H \hookrightarrow (C(X)\otimes H)\rtimes \Gamma$. 

As for the map $p_\alpha^H$, it is easy to check that  $
\sup_{x\in X} \Vert {\what\xi} (x, \alpha)\Vert_H^2 \leq \Vert {\what\xi}\Vert^2_{C(X,H)\rtimes \Gamma}$.
Indeeed, from the very definition of the reduced  completion norm, we deduce that for any  $x\in X$ and any $\alpha\in \Gamma$,
$$
\Vert \lambda_x \langle {\what\xi}, {\what\xi}\rangle (\delta_{\alpha})\Vert_{\ell^2(\Gamma)} \leq \Vert {\what\xi}\Vert^2_{(C(X)\otimes H)\rtimes_r\Gamma}.
$$
This shows that for any $(x, \alpha)\in \maG$, we have the estimates
$$
\sum_{\gamma\in \Gamma} \left\vert  \sum_{\mu\in \Gamma} \langle{\what\xi}(\mu\gamma x, \mu), {\what\xi} (\mu\gamma x, \mu\gamma\alpha)\rangle   \right\vert^2\; \leq \; \Vert {\what\xi}\Vert^2_{(C(X)\otimes H)\rtimes_r\Gamma}.
$$
In particular, for $\gamma=\alpha^{-1}$, we get
$$
\left\vert \sum_\mu \langle {\what \xi} (\mu\alpha^{-1} x, \mu), {\what\xi}(\mu\alpha^{-1} x, \mu) \rangle  \right\vert^2\; \leq \; \Vert {\what\xi}\Vert^2_{(C(X)\otimes H)\rtimes_r\Gamma}.
$$
The term inside the $\vert\bullet\vert$ is now a sum of non-negative real numbers, and hence for any $\mu\in \Gamma$ and any $x\in X$:
$$
\Vert{\what\xi}(\mu\alpha^{-1} x, \mu) \Vert_H^2 \; \leq \; \Vert {\what\xi}\Vert^2_{(C(X)\otimes H)\rtimes_r\Gamma}.
$$
In particular, for $\mu=\alpha$ this becomes
$$
\Vert{\what\xi}(x, \alpha) \Vert_H^2 \; \leq \; \Vert {\what\xi}\Vert^2_{(C(X)\otimes H)\rtimes_r\Gamma}.\; \leq \; \Vert {\what\xi}\Vert^2_{(C(X)\otimes H)\rtimes\Gamma}
$$
%
Using the previous estimates, we deduce that $p_\alpha^H$ extends to a contractive linear map $p_\alpha^H: (C(X)\otimes H)\rtimes \Gamma \rightarrow C(X)\otimes H$.

\item We have for ${\what \xi}\in C_c(X\times\Gamma, H)$ and $\eta\in C (X, H)$:
$$
\langle p_{1_\Gamma}^H{\what\xi}, \eta \rangle = p_{1_\Gamma} \langle {\what\xi}, i_{1_\Gamma}^H\eta \rangle.
$$
Therefore, for any $\xi\in C(X, H)$,
\begin{eqnarray*}
\langle (p_{1_\Gamma}^H {\what T} i_{1_\Gamma}^H) \xi , \eta\rangle & = & p_{1_\Gamma} \langle {\what T} i_{1_\Gamma}^H \xi, i_{1_\Gamma}^H \eta \rangle\\
& = & p_{1_\Gamma} \langle i_{1_\Gamma}^H \xi,  {\what T}^* i_{1_\Gamma}^H \eta \rangle\\
& = &  \langle  \xi, (p_{1_\Gamma}^H {\what T}^* i_{1_\Gamma}^H) \eta \rangle
\end{eqnarray*}
If now ${\what T}$ ia a compact operator of the Hilbert $C^*(\maG)$-module $(C(X)\otimes H)\rtimes\Gamma$, then it can be approximated by finite sums of operators of the type $ \theta_{{\what \eta}, {\what \eta}'}$ for ${\what \eta}, {\what \eta}'\in C_c(X\times\Gamma, H)$. Since the operators $p_{1_\Gamma}$ and $i_{1_\Gamma}$ are continuous, it suffices to prove that the operator $p_{1_\Gamma}^H\circ \theta_{{\what \eta}, {\what \eta}'} \circ i_{1_\Gamma}^H$ is a compact operator of the Hilbert $C(X)$-module $C(X)\otimes H$. But a straightforward computation shows that 
$$
p_{1_\Gamma}^H\circ \theta_{{\what \eta}, {\what \eta}'} \circ i_{1_\Gamma}^H = \sum_\gamma \theta_{p_\gamma^H({\what\eta}), p_\gamma^H({\what\eta}')}.
$$
{
Notice that the sum is finite since $\widehat{\eta}$ and $\widehat{\eta}'$ both have compact supports.}
\end{enumerate}
\end{proof}

\medskip

We now use the action of $\Gamma$ on $H$. The $\Gamma$-equivariant representation $\pi$ yields a  representation 
$$
\pi\rtimes\Gamma : C_0(Z\times X)\rtimes\Gamma\longrightarrow \maL_{C^*(\maG)}((C(X)\otimes H)\rtimes\Gamma).
$$ 
given for $g\in C_c(Z\times X\times\Gamma)$ and $\xi\in C_c(X\times \Gamma ,H)$ by the formula
$$
(\pi\rtimes \Gamma)(g)(\xi)(x, \gamma)=\sum_{\mu\in\Gamma}\pi_x(g(\bullet, x, \mu)) U_\mu\xi(\mu^{-1} x, \mu^{-1}\gamma).
$$
Recall that $C_0(Z\times X)\rtimes\Gamma$ is our completion  $C^*$-algebra for the transformation groupoid $(Z\times X)\rtimes \Gamma$, and its laws can also be described on the dense subalgebra $C_c(Z\times X\times \Gamma)\simeq C_c(\Gamma, C_c(Z\times X))$ as follows: 
$$
(f*g)(\gamma)(z,x)=\sum_{\mu\in\Gamma}f(\mu)(z,x)g(\mu^{-1}\gamma)(z \mu,\mu^{-1} x)
\mbox{  and  }
f^*(\gamma)(z,x)=\overline{f(\gamma^{-1})(z \gamma,\gamma^{-1} x)}.
$$

Using the chosen compactly supported continuous cutoff function $c:Z\to [0, 1]$, so satisfying $
\sum_{\gamma\in\Gamma} \gamma c^2=1$,
we define a projection $e\in C_c(\Gamma,C_c(Z\times X))$ by setting
$$
e(\gamma)(z,x)=c(z)c(z \gamma), \text{ for } z\in Z, x\in X\text{ and }\gamma\in \Gamma.
$$
It is indeed easy to check that $e*e=e = e^*$. The projection $e$  is the Mishchenko projection associated with $c$.
Notice that $(\pi\rtimes\Gamma)(e)((C(X)\otimes H)\rtimes\Gamma)$,  the range of the projection $(\pi\rtimes\Gamma)(e)$, is then a  right Hilbert $C^*(\maG)$-module as well and we now state   that it provides another  description of our Connes-Skandalis Hilbert module $\maE^H$.

\begin{lemma}
\label{Smough2}
There exists an isomorphism of Hilbert $C^*(\maG)$-modules
$$
\phi=\phi^H : \maE^H\overset{\simeq}\longrightarrow(\pi\rtimes\Gamma)(e)((C(X)\otimes H)\rtimes\Gamma)
$$
induced by the formula
$$
\phi(\xi)(x, \gamma)=\pi_x(c)U_\gamma\xi(\gamma^{-1} x), \quad  \text{for } \xi\in \maE_c^H\text{ and }(\gamma, x)\in \maG.
$$
\end{lemma}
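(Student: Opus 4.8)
The plan is to verify that the explicit formula $\phi(\xi)(x,\gamma)=\pi_x(c)U_\gamma\xi(\gamma^{-1}x)$ defines an isometric $C^*(\maG)$-module map on the dense pre-Hilbert submodule $\maE_c^H$, that its image lands in $(\pi\rtimes\Gamma)(e)\bigl((C(X)\otimes H)\rtimes\Gamma\bigr)$, and that it is surjective onto a dense subset; then it extends to the desired isomorphism of Hilbert modules by continuity.

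\medskip

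First I would check \emph{well-definedness and the range}: for $\xi\in\maE_c^H$, the function $\phi(\xi)$ is compactly supported in the $\Gamma$-variable because $c$ has compact support in $Z$ and $\Gamma$ acts properly, so $\phi(\xi)\in C_c(X\times\Gamma,H)$. To see that $\phi(\xi)$ lies in the range of $(\pi\rtimes\Gamma)(e)$, I would compute $(\pi\rtimes\Gamma)(e)\phi(\xi)$ directly from the convolution formula for $\pi\rtimes\Gamma$ and the formula $e(\mu)(z,x)=c(z)c(z\mu)$; using $\sum_{\mu\in\Gamma}\gamma c^2 = 1$ (i.e. $\sum_\mu c(z\mu)^2=1$ for all $z$), together with the $\Gamma$-equivariance relation $\pi_{\gamma x}(g)=U_\gamma\pi_x(\gamma^{-1}g)U_{\gamma^{-1}}$, the sum telescopes to give $(\pi\rtimes\Gamma)(e)\phi(\xi)=\phi(\xi)$. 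The key algebraic identity here is that $\pi_x(c)\,\pi_x((\gamma^{-1}g)_x)=$ the appropriate cutoff-weighted piece, exactly as in the classical Connes--Skandalis computation.

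\medskip

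Next I would verify that $\phi$ \emph{preserves the inner product}. Using the $C_c(X\times\Gamma)$-valued inner product on $\maE_c^H$, namely $\langle\xi,\eta\rangle(x,\gamma)=\langle\xi(x),U_\gamma\eta(\gamma^{-1}x)\rangle$, and the inner product on $(C(X)\otimes H)\rtimes\Gamma$, namely $\langle\what\xi,\what\eta\rangle(x,\gamma)=\sum_{\mu}\langle\what\xi(\mu x,\mu),\what\eta(\mu x,\mu\gamma)\rangle$, I would substitute the formula for $\phi$, use $\pi_x(c)^*\pi_x(c)=\pi_x(c^2)$, and again apply the cutoff identity $\sum_\mu c(z\mu)^2=1$ and $\Gamma$-equivariance of $\pi$ to collapse the $\mu$-sum; this recovers $\langle\xi,\eta\rangle(x,\gamma)$ exactly, so $\langle\phi\xi,\phi\eta\rangle=\langle\xi,\eta\rangle$ in $C_c(X\times\Gamma)$. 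Since the Hilbert module norms on both completions are determined by these $C_c(X\times\Gamma)$-valued inner products via the fixed admissible $C^*$-norm on $C_c(X\times\Gamma)$, this identity is independent of the completion chosen, and $\phi$ is isometric; hence $\phi$ is $C^*(\maG)$-linear (linearity over $C_c(X\times\Gamma)$ being a direct check with the module actions) and extends to an isometric embedding $\maE^H\hookrightarrow(\pi\rtimes\Gamma)(e)\bigl((C(X)\otimes H)\rtimes\Gamma\bigr)$.

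\medskip

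Finally, \emph{surjectivity}: the range of $\phi$ is closed (isometric image of a complete space) and contains all elements of the form $(\pi\rtimes\Gamma)(e)\what\xi$ for $\what\xi\in C_c(X\times\Gamma,H)$, which are dense in $(\pi\rtimes\Gamma)(e)\bigl((C(X)\otimes H)\rtimes\Gamma\bigr)$; indeed, given such $\what\xi$ one exhibits a preimage by setting $\xi(x)=\sum_\mu\pi_x((\mu^{-1}c\text{-twisted data}))U_\mu\what\xi(\mu x,\mu)$ — more precisely one checks $\phi\bigl(\text{appropriate element of }\maE_c^H\bigr)=(\pi\rtimes\Gamma)(e)\what\xi$ using the same cutoff identity in reverse. \textbf{The main obstacle} I anticipate is purely bookkeeping: carefully tracking the $\Gamma$-actions and the twisted equivariance convention $\pi_{\gamma x}(g)=U_\gamma\pi_x(\gamma^{-1}g)U_{\gamma^{-1}}$ through the convolution sums so that all the $U_\gamma$'s and cutoff factors cancel correctly — there is genuine risk of index errors, but no conceptual difficulty beyond the classical Connes--Skandalis argument, which this generalizes verbatim with $C(X)$-coefficients. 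I would refer to \cite{VictorThesis} for the routine verifications, as the paper does elsewhere.
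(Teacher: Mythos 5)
Your proposal is correct and follows essentially the same route as the paper's proof: show the image lands in the range of $(\pi\rtimes\Gamma)(e)$ via the cutoff identity $\sum_\mu \mu c^2=1$ and the equivariance relation $U_\mu\pi_{\mu^{-1}x}(c)=\pi_x(\mu c)U_\mu$, check $\langle\phi\xi,\phi\eta\rangle=\langle\xi,\eta\rangle$ by direct computation, and exhibit an explicit preimage $\xi(x)=\sum_\mu\pi_x(\mu c)U_\mu\eta(\mu^{-1}x,\mu^{-1})$ for surjectivity onto the dense subset $(\pi\rtimes\Gamma)(e)\bigl(C_c(X\times\Gamma,H)\bigr)$. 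Your preimage formula is the paper's up to the reindexing $\mu\mapsto\mu^{-1}$, so the only remaining work is the bookkeeping you already flag.
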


\begin{proof}\
We have for $\xi\in \maE_c^H$ and $(\gamma, x)\in \maG$:
$$
(\pi\rtimes\Gamma)(e)(\phi(\xi))(x, \gamma) =
 \sum_{\mu\in\Gamma}\pi_x(c)\pi_x(\mu c)U_\mu\pi_{\mu^{-1}x}(c)U_{\mu^{-1}\gamma}\xi((\mu^{-1}\gamma)^{-1}\mu^{-1} x).
 $$
 But, $U_\mu\circ \pi_{\mu^{-1}x} (c)=\pi_x (\mu c) \circ U_\mu$ and $\sum_\mu \mu c^2=1$, hence we deduce
$$
(\pi\rtimes\Gamma)(e)(\phi(\xi))(x, \gamma) = \phi(\xi)(x, \gamma)\text{ and so }\phi(\xi)\in(\pi\rtimes\Gamma)(e)(C_c(\Gamma\times X, H)).
$$
Moreover, we have $\langle\phi(\xi),\phi(\xi)\rangle =\langle\xi,\xi\rangle$ by direct verification. 
If now $\eta\in C_c(\Gamma\times X, H)$ is  given, then by setting
$$
\xi(x)=\sum_{\mu\in\Gamma}\pi_x(\mu c)U_\mu\eta(\mu^{-1} x, \mu^{-1}),
$$
we define a preimage of $\eta$ under $\phi$, since clearly $\xi\in\maE_c^H$ and we can compute
\begin{eqnarray*}
\phi(\xi)(x, \gamma) &=& \pi_x(c)U_\gamma\xi(\gamma^{-1}x)\\
&=&\sum_{\mu\in\Gamma}\pi_x(c)\pi_x(\mu c)U_\mu\eta(\mu^{-1} x, \mu^{-1}\gamma)\\
&=& (\pi\rtimes\Gamma)(e)(\eta)(x, \gamma).
\end{eqnarray*}
\end{proof}

Thanks to  Lemma \ref{Smough2},  we can describe the Connes-Skandalis Hilbert module   as the  composition Hilbert module (see for instance \cite{Lance})
$$
e(C_0(Z\times X)\rtimes\Gamma)\otimes_{\pi\rtimes\Gamma}((C(X)\otimes H)\rtimes\Gamma) \text{ also denoted } e(C_0(Z\times X)\rtimes\Gamma)\otimes_{C_0(Z\times X)\rtimes\Gamma}((C(X)\otimes H)\rtimes\Gamma).
$$
More precisely, we have the following standard

\begin{lemma}\label{CS general}
The following formula
$$
\alpha(f\otimes\xi)=(\pi\rtimes\Gamma)(f)\, (\xi)\quad \text{for  }f\in e\left(C_0(Z\times X)\rtimes \Gamma\right)\text{ and }\xi\in (C(X)\otimes H)\rtimes \Gamma,
$$
induces  an isomorphism of Hilbert $C^*(\maG)$-modules
$$
\alpha : e\left[(C_0(Z\times X)\rtimes\Gamma\right]\otimes_{\pi\rtimes\Gamma}\left[(C(X)\otimes H)\rtimes\Gamma\right]\overset{\simeq}\longrightarrow(\pi\rtimes\Gamma)(e)\left[(C(X)\otimes H)\rtimes\Gamma\right].
$$
Therefore, we get an isomorphism of Hilbert modules
$$
\alpha^{-1}\circ \phi: \maE^H\stackrel{\simeq} {\longrightarrow}e\left[(C_0(Z\times X)\rtimes\Gamma\right]\otimes_{\pi\rtimes\Gamma}\left[(C(X)\otimes H)\rtimes\Gamma\right].
$$
\end{lemma}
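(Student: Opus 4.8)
The plan is to reduce Lemma \ref{CS general} to the general fact that, for any Hilbert $A$-module $F$ equipped with a $*$-homomorphism $\rho:B\to\maL_A(F)$ and any projection $e=e^*=e^2$ in $B$, the map $eb\otimes\xi\mapsto\rho(eb)\xi$ extends to an isomorphism of Hilbert $A$-modules $eB\otimes_\rho F\overset{\simeq}{\longrightarrow}\rho(e)F$, where $eB$ is regarded as a Hilbert $B$-module (a closed right ideal of $B$) with $\langle ec,eb\rangle_{eB}=c^*eb$. We apply this with $A=C^*(\maG)$, $B=C_0(Z\times X)\rtimes\Gamma$, $F=(C(X)\otimes H)\rtimes\Gamma$, $\rho=\pi\rtimes\Gamma$, and $e$ the Mishchenko projection, which is a genuine projection of $B$ because $e*e=e=e^*$ in $C_c(\Gamma,C_c(Z\times X))$.

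First I would check that $\alpha$ is well defined on the $B$-balanced algebraic tensor product and right $C^*(\maG)$-linear; both are immediate from $\pi\rtimes\Gamma$ being a homomorphism, e.g. $\alpha(ebb'\otimes\xi)=(\pi\rtimes\Gamma)(eb)(\pi\rtimes\Gamma)(b')\xi=\alpha(eb\otimes(\pi\rtimes\Gamma)(b')\xi)$. The substantive point is that $\alpha$ is isometric for the $C^*(\maG)$-valued inner products, and this is exactly where $e^2=e=e^*$ enters: on the algebraic level
\[
\langle\alpha(eb_1\otimes\xi_1),\alpha(eb_2\otimes\xi_2)\rangle=\langle\xi_1,(\pi\rtimes\Gamma)\big((eb_1)^*(eb_2)\big)\xi_2\rangle=\langle\xi_1,(\pi\rtimes\Gamma)(b_1^*eb_2)\xi_2\rangle,
\]
which coincides with $\langle\xi_1,(\pi\rtimes\Gamma)(\langle eb_1,eb_2\rangle_{eB})\xi_2\rangle$, the defining inner product of $eb_1\otimes\xi_1$ and $eb_2\otimes\xi_2$ in the composition module. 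Hence $\alpha$ extends to an isometric, and in particular closed-range, embedding of the completed composition module into $(C(X)\otimes H)\rtimes\Gamma$.

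Next I would identify the range with $(\pi\rtimes\Gamma)(e)\big[(C(X)\otimes H)\rtimes\Gamma\big]$. One inclusion is clear from the formula for $\alpha$. For the other, the observation is that $e$ itself belongs to $eB$ (take $b=e$), so $(\pi\rtimes\Gamma)(e)\xi=\alpha(e\otimes\xi)$ lies in the range for every $\xi\in(C(X)\otimes H)\rtimes\Gamma$; since the range is closed this yields the desired surjection onto $(\pi\rtimes\Gamma)(e)\big[(C(X)\otimes H)\rtimes\Gamma\big]$, and notably no appeal to non-degeneracy of $\pi\rtimes\Gamma$ is needed. An isometric surjection of Hilbert modules is unitary, hence adjointable with $\alpha^{-1}=\alpha^*$, which is the first assertion. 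Composing with the isomorphism $\phi$ of Lemma \ref{Smough2} yields $\alpha^{-1}\circ\phi:\maE^H\overset{\simeq}{\longrightarrow}e\big[C_0(Z\times X)\rtimes\Gamma\big]\otimes_{\pi\rtimes\Gamma}\big[(C(X)\otimes H)\rtimes\Gamma\big]$, the second assertion.

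Since the lemma is flagged as ``standard'', I do not anticipate a real obstacle; the only steps that demand any attention are the bookkeeping of the Hilbert $B$-module structure on $eB$, so that the composition-module inner product comes out as $\langle\xi_1,(\pi\rtimes\Gamma)(b_1^*eb_2)\xi_2\rangle$, and the elementary remark that $e\in eB$, which is what makes the range identification painless. One could alternatively deduce surjectivity from non-degeneracy of $\pi\rtimes\Gamma$ (inherited from non-degeneracy of $\pi$), but the $e\in eB$ route is shorter.
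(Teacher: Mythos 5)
Your proposal is correct and follows essentially the same route as the paper: well-definedness from $\pi\rtimes\Gamma$ being a homomorphism, preservation of the $C^*(\maG)$-valued inner products (which uses $e^*e=e$ exactly as you say), and surjectivity onto $(\pi\rtimes\Gamma)(e)\left[(C(X)\otimes H)\rtimes\Gamma\right]$ via the single observation that $\alpha(e\otimes\eta)=(\pi\rtimes\Gamma)(e)(\eta)$. The only difference is presentational — you package it as the general statement $eB\otimes_\rho F\simeq\rho(e)F$ for a projection $e\in B$, while the paper verifies the same three points directly.
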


\begin{proof}
The morphism  $\alpha$ is well defined since $\pi\rtimes\Gamma$ is a homomorphism.
Moreover, we have
\begin{eqnarray*}
\langle\alpha(f\otimes\xi),\alpha(f\otimes\xi)\rangle&=&\langle(\pi\rtimes\Gamma)(f) (\xi),(\pi\rtimes\Gamma)(f)(\xi)\rangle\\
&=&\langle f\otimes\xi,f\otimes\xi\rangle.
\end{eqnarray*}
If now  $\eta\in (C(X)\otimes H)\rtimes \Gamma$, then  $
\alpha(e\otimes\eta)=(\pi\rtimes\Gamma)(e)(\eta),$
which shows the surjectivity of  $\alpha$.
\end{proof}

\begin{remark}
\label{Ornstein2}
A straightforward computation gives the following formula for the isomorphism $
\phi^{-1}\circ\alpha$:
$$
(\phi^{-1}\circ\alpha)(f\otimes\xi)(x)=
\sum_{\gamma,\mu\in\Gamma}\pi_x(\mu c)\pi_x(\mu g(\mu^{-1}\gamma)_{\mu^{-1} x})U_\gamma\xi(\gamma^{-1},\gamma^{-1} x).
$$
\end{remark}

\medskip

\section{The admissible HR sequence }

Most of the constructions we give below are valid in general, we shall though sometimes assume that the compact space  $X$ is  finite dimensional, especially when we apply   the Paschke-Higson duality theorem as proved in \cite{BenameurRoyII}.  This will then be indicated. 

\subsection{Roe algebras}

Recall that if $T\in\maL_{C(X)}(C(X)\otimes H)$, then the operator $T\rtimes\Gamma\in\maL_{C^*(\maG)}((C(X)\otimes H)\rtimes\Gamma)$ is defined by
$$
(T\rtimes\Gamma)(\xi)(x,\gamma)=T_x\xi(x,\gamma).
$$
Said differently, if we use the isomorphism $(C(X)\otimes H)\rtimes \Gamma\simeq (C(X)\otimes H)\otimes_{C(X)} C^*(\maG)$ then the operator $T\rtimes \Gamma$ is just the composition operator $T\otimes_{C(X)} \Id$. In particular, the map 
$$
\maL_{C(X)}(C(X)\otimes H) \longrightarrow \maL_{C^*(\maG)}((C(X)\otimes H)\rtimes \Gamma)\text{ given by }T\longmapsto T\rtimes \Gamma,
$$
is a $C^*$-algebra homomorphism. Moreover, 
and since the morphism $C(X)\to C^*(\maG)$ is injective, this morphism is also injective and hence isometric. Finally, an easy verification shows that if $T$ is a compact operator of the Hilbert $C(X)$-module $C(X)\otimes H$ then $T\rtimes\Gamma\simeq {{T\otimes_{C(X)}\id}}$ is a compact operator of the Hilbert $C^*(\maG)$-module $(C(X)\otimes H)\rtimes \Gamma$. For more details on these properties, see for instance \cite{Lance}[Proposition 4.7], and \cite{BaumGuentnerWillett} for the extension to admissible crossed products. 

\medskip

\begin{corollary}\label{rtimesCompact}
If $T\in \maL_{C(X)} (C(X)\otimes H)$ is such that  the operator ${\what T}:=T\rtimes \Gamma\simeq {{T\otimes_{C(X)}\id}}$ is a compact operator of the $C^*(\maG)$-Hilbert module $(C(X)\otimes H)\rtimes\Gamma$, then the operator $T$ is a compact operator of the $C(X)$-Hilbert module $C(X)\otimes H$.
\end{corollary}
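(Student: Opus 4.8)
The plan is to deduce this immediately from Proposition \ref{Descente}(2) applied to the adjointable operator $\what T := T\rtimes \Gamma$ on the Hilbert $C^*(\maG)$-module $(C(X)\otimes H)\rtimes \Gamma$. Recall that Proposition \ref{Descente} provides, for every $\alpha\in \Gamma$, bounded maps $i_\alpha^H\colon C(X)\otimes H \to (C(X)\otimes H)\rtimes\Gamma$ and $p_\alpha^H\colon (C(X)\otimes H)\rtimes\Gamma \to C(X)\otimes H$, and asserts that $p_{1_\Gamma}^H\circ \what S\circ i_{1_\Gamma}^H$ is a $C(X)$-compact operator whenever $\what S$ is a $C^*(\maG)$-compact operator. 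So the only thing that needs checking is the identity
$$
p_{1_\Gamma}^H\circ (T\rtimes\Gamma)\circ i_{1_\Gamma}^H = T \quad\text{in } \maL_{C(X)}(C(X)\otimes H).
$$

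First I would unwind the definitions on the dense subspace $C(X, H)\subset C(X)\otimes H$. For $\xi\in C(X, H)$ one has $i_{1_\Gamma}^H(\xi) = \xi\otimes \delta_{1_\Gamma}$, i.e. the element of $C_c(X\times\Gamma, H)$ whose value at $(x,\gamma)$ is $\xi(x)$ if $\gamma = 1_\Gamma$ and $0$ otherwise. Applying the formula defining $T\rtimes\Gamma$, namely $(T\rtimes\Gamma)(\what\xi)(x,\gamma) = T_x\what\xi(x,\gamma)$, gives an element again supported on $\{\gamma = 1_\Gamma\}$, with value $T_x\xi(x)$ there. Finally $p_{1_\Gamma}^H$ reads off the value at $\gamma = 1_\Gamma$, so the composition sends $\xi$ to the section $x\mapsto T_x\xi(x)$, which is precisely $T\xi$. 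Since $C(X,H)$ is dense and the three maps involved are bounded, the displayed identity follows on all of $C(X)\otimes H$.

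Having established this identity, the corollary is immediate: assuming $\what T = T\rtimes\Gamma$ is a $C^*(\maG)$-compact operator of $(C(X)\otimes H)\rtimes\Gamma$ — equivalently, that $T\otimes_{C(X)} C^*(\maG)$ is compact, via the isomorphism $(C(X)\otimes H)\rtimes\Gamma\simeq (C(X)\otimes H)\otimes_{C(X)}C^*(\maG)$ recalled just above the statement — Proposition \ref{Descente}(2) yields that $T = p_{1_\Gamma}^H\circ \what T\circ i_{1_\Gamma}^H$ is a compact operator of the Hilbert $C(X)$-module $C(X)\otimes H$.

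I do not anticipate a genuine obstacle here; all the analytic content sits in Proposition \ref{Descente}, and the present statement is just the special case $\what S = T\rtimes\Gamma$. The only point requiring a little care is that the maps $i_\alpha^H$, $p_\alpha^H$ and the contractivity estimate behind Proposition \ref{Descente} are valid for an arbitrary admissible crossed product completion — which is exactly how that proposition is phrased — so no additional hypothesis on $\rtimes\Gamma$ is needed.
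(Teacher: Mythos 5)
Your proof is correct and follows exactly the paper's own argument: apply Proposition \ref{Descente}(2) to $\what T = T\rtimes\Gamma$ after checking the identity $p_{1_\Gamma}^H\circ (T\rtimes\Gamma)\circ i_{1_\Gamma}^H = T$. You simply spell out the verification of that identity in more detail than the paper does.
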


\begin{proof}\
We apply Proposition \ref{Descente} and notice that $p_{1_\Gamma}^H\circ (T\rtimes\Gamma) \circ i_{1_\Gamma}^H = T$.
\end{proof}

\medskip

We shall use the following two equivalent definitions of lifts of operators. Notice  in particular that our definitions rely a priori on the choice of a cut-off function $c$ and use the associated projection $e\in C_0(X\times Z)\rtimes \Gamma$. However, we explain below that the  resulting admissible  Roe $C^*$-algebras do not depend on such choice.  

Recall that for for any chosen cut-off function $c$, and for $f\in e(C_0(Z\times X)\rtimes\Gamma)$, the operator  $L_f$  is 
$$
L_f: (C(X)\otimes H)\rtimes \Gamma \rightarrow e(C_0(Z\times X)\rtimes\Gamma)\otimes_{\pi\rtimes\Gamma} \left((C(X)\otimes H)\rtimes \Gamma\right)\text{ given by }L_f (u)=f\otimes u.
$$
This should cause no confusion and $L_f$ can also be composed with the identification \ref{Ornstein2} so that it is valued in the  Connes-Skandalis Hilbert module $\maE^H$. We shall still denote this composite map by $L_f$ for simplicity.

\medskip

\begin{definition}[Lifts of operators, first definition]
\label{def1 lift2}
Let $T\in\maL_{C(X)}(C(X)\otimes H)$ and assume that $T$ commutes modulo compact operators with the representation $\pi$  of $C_0(Z\times X)$.
An operator $T_\maE\in \maL_{C^*(\maG)}(\maE^H)$ is called a \emph{lift} of $T$ if for some cut-off function $c\in C_c(Z)$ with its associated Mishchenko projection $e$, $T_\maE$ is a lift of $T\rtimes\Gamma\in\maL_{C^*(\maG)}((C(X)\otimes H)\rtimes\Gamma)$ in the sense of definition \ref{lift}. So via the identification \ref{Ornstein2} using $e$, this means that  for any $f\in e(C_0(Z\times X)\rtimes\Gamma)$, the following diagrams commute modulo compact operators:
$$\xymatrix{
{\maE^H} \ar@{->}[r]^{T_\maE} \ar@{<-}[d]_{L_f}  & {\maE^H} \ar@{<-}[d]^{L_f} 
&\mbox{ and } &
{\maE^H} \ar@{->}[r]^{T_\maE} \ar@{->}[d]_{(L_f)^*}  & {\maE^H} \ar@{->}[d]^{(L_f)^*}\\
{(C(X)\otimes H)\rtimes\Gamma} \ar@{->}[r]^{T\rtimes\Gamma}&
{(C(X)\otimes H)\rtimes\Gamma}
& & {(C(X)\otimes H)\rtimes\Gamma} \ar@{->}[r]^{T\rtimes\Gamma}&
{(C(X)\otimes H)\rtimes\Gamma} 
}$$
\end{definition}

\medskip

We point out that if $T_\maE$ is a lift of $T$ in the sense of Definition \ref{def1 lift2} for some cut-off function, then it is automatically a lift of $T$ for any cut-off function. Therefore, Definition \ref{def1 lift2}  is independent of the choice of the cut-off function $c$, see Remark \ref{c-independent} below. Also, notice that if $T_\maE$ is a given lift for $T$ then  any compact perturbation of $T_\maE$ gives another lift of $T$. 

By using the identification of Lemma \ref{CS general}, we can also use    the following equivalent definition of a lift.

\medskip

\begin{definition}[Lifts of operators, second definition]
\label{def2 lift2}
We say that an operator $T_\maE\in \maL_{C^*(\maG)}(\maE^H)$ is a \emph{lift} of an operator $T\in\maL_{C(X)}(C(X)\otimes H)$ if for any $f\in e(C_0(Z\times X)\rtimes\Gamma)$, the following diagrams commute modulo compact operators  (using the identification \ref{Smough2}):
$$\xymatrix{
{\maE^H} \ar@{->}[r]^{T_\maE} \ar@{<-}[d]_{(\pi\rtimes\Gamma)(f)}  & {\maE^H} \ar@{<-}[d]^{(\pi\rtimes\Gamma)(f)} 
&\mbox{ and } &
{\maE^H} \ar@{->}[r]^{T_\maE} \ar@{->}[d]_{(\pi\rtimes\Gamma)(f^*)}  & {\maE^H} \ar@{->}[d]^{(\pi\rtimes\Gamma)(f^*)}\\
{(C(X)\otimes H)\rtimes\Gamma} \ar@{->}[r]^{T\rtimes\Gamma}&
{(C(X)\otimes H)\rtimes\Gamma}
& & {(C(X)\otimes H)\rtimes\Gamma} \ar@{->}[r]^{T\rtimes\Gamma}&
{(C(X)\otimes H)\rtimes\Gamma} 
}$$
\end{definition}

\medskip

Again and by the similar argument explained in Remark \ref{c-independent}, a lift of $T$ in the sense of Definition \ref{def2 lift2} relative to some cut-off function will be a lift of $T$ relative to any cut-off function. Hence, this second definition also does not depend on the choice of cut-off function $c\in C_c(Z)$. 
Definitions \ref{def1 lift2} and \ref{def2 lift2} of a lift are actually obviously equivalent.
Notice indeed that we have for any $\xi\in C_c(X\times\Gamma,H)$ and any $g\in C_c(\Gamma,C_0(Z\times X))$, 
$(\alpha\circ L_f)(\xi)=(\pi\rtimes\Gamma)(f)\xi$. Moreover, 
$$
L_{f^*}(g\otimes\xi)=(\pi\rtimes\Gamma)(f^*g)(\xi)=(\pi\rtimes\Gamma)(f^*)(\pi\rtimes\Gamma)(g)\xi=(\pi\rtimes\Gamma)(f^*)\alpha(g\otimes\xi).
$$

\medskip

We are now in position to introduce the Roe algebra $D^*_\Gamma (\maE^H)$ associated with the fixed admissible crossed product functor, as well as its ideal $C^*_\Gamma (\maE^H)$. Recall the equivariant Roe algebra $D^*_\Gamma (X, Z, H)$ and its ideal $C^*_\Gamma (X, Z, H)$ {{of Definition \ref{EquivariantRoeAlgebras}}}. 

\medskip

\begin{definition}\label{DefRoe} The  Roe algebras associated with $Z$, with the $\Gamma$-equivariant representation in $H$ and with our admissible crossed product, are defined as follows:
\begin{itemize}
\item The Roe algebra $D^*_\Gamma (\maE^H)$ is the $C^*$-algebra closure  in $\maL_{C^*(\maG)} (\maE^H)$ of all operators that are lifts of operators in $D^*_\Gamma (X, Z, H)$.
\item The Roe ideal $C^*_\Gamma (\maE^H)$ is the $C^*$-algebra composed of operators  in $\maL_{C^*(\maG)} (\maE^H)$ which are lifts of operators in $C^*_\Gamma (X, Z, H)$.
\end{itemize}
\end{definition}

\medskip

Composition of lifts is clearly a lift for the composition of the original operators, and similarly for the adjoints, hence $D^*_\Gamma (\maE^H)$ is a unital $C^*$-algebra and  $C^*_\Gamma (\maE^H)$ is a two sided involutive closed ideal of $D^*_\Gamma(\maE^H)$. 
We shall prove below (see Proposition \ref{locally compact2}) that  $C^*_\Gamma (\maE^H)$ is nothing but  the ideal $\maK_{C^*(\maG)}(\maE^H)$ of compact operators.

\begin{remark}
When $X=\{\bullet\}$ is reduced to a singleton, although not totally obvious, Definition \ref{DefRoe} of the Roe $C^*$-algebras is compatible up to isomorphism with the previous definitions of Higson and Roe \cite{HigsonRoe2008}. More precisely, when the admissible crossed product functor $\rtimes\Gamma=\rtimes_r\Gamma$ is the reduced one, we get the reduced Roe algebra, while we obtain the maximal one introduced and studied in \cite{HigsonRoe2008} when $\rtimes\Gamma = \rtimes_{\max}\Gamma$. These verifications  are  proved by the second author in \cite{VictorThesis}.
\end{remark}

\color{black}

In the sequel, we shall denote as usual for $\xi\in C(X, H)$, ${\what \xi}\in C_c(X\times\Gamma, H)$ and $\gamma\in\Gamma$, by $\gamma \xi$ and $\gamma{\what\xi}$ the action of $\gamma$ on $\xi$ and ${\what\xi}$ respectively, say the elements of $C(X, H)$ and $C_c(X\times \Gamma, H)$ defined by 
$$
(\gamma\xi)(x)=U_\gamma (\xi (\gamma^{-1}x))\text{ and } (\gamma{\what\xi}) (x, \alpha)=U_\gamma (\xi (\gamma^{-1}x, \gamma^{-1}\alpha)).
$$

For a $\Gamma$-invariant operator $T\in\maL_{C(X)}(C(X)\otimes H)^\Gamma$, recall the following  facts:
\begin{lemma}\label{rtimes}\
\begin{enumerate}
\item If $T$ is $\pi$-pseudolocal then the operator $T\rtimes\Gamma$ is $\pi\rtimes \Gamma$-pseudolocal. 
\item $T$ is $\pi$-locally compact if and only if the operator $T\rtimes\Gamma$ is $\pi\rtimes\Gamma$-locally compact. 
\end{enumerate} 
%
\end{lemma}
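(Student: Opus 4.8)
The plan is to reduce everything to a pointwise (fiberwise) computation on the dense subspace $C_c(X\times\Gamma,H)$, using the explicit formula $(T\rtimes\Gamma)(\xi)(x,\gamma)=T_x\xi(x,\gamma)$ together with the formula for $(\pi\rtimes\Gamma)(g)$ given just before the statement. First I would treat part (1). Fix $g\in C_c(Z\times X\times\Gamma)$; I want to show $[T\rtimes\Gamma,(\pi\rtimes\Gamma)(g)]$ is a compact operator of $(C(X)\otimes H)\rtimes\Gamma$. Applying both composites to $\xi\in C_c(X\times\Gamma,H)$ and evaluating at $(x,\gamma)$, one gets
$$
\big[(T\rtimes\Gamma)(\pi\rtimes\Gamma)(g)\xi\big](x,\gamma)=\sum_{\mu}T_x\,\pi_x(g(\bullet,x,\mu))\,U_\mu\,\xi(\mu^{-1}x,\mu^{-1}\gamma),
$$
whereas in the other order $T$ is applied inside, giving $\sum_\mu \pi_x(g(\bullet,x,\mu))\,U_\mu\,T_{\mu^{-1}x}\,\xi(\mu^{-1}x,\mu^{-1}\gamma)$. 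Using $\Gamma$-equivariance of $T$, i.e. $T_{\mu^{-1}x}=U_{\mu^{-1}}T_xU_\mu$, the second expression becomes $\sum_\mu \pi_x(g(\bullet,x,\mu))\,T_x\,U_\mu\,\xi(\mu^{-1}x,\mu^{-1}\gamma)$, so the difference is $\sum_\mu [T_x,\pi_x(g(\bullet,x,\mu))]\,U_\mu\,\xi(\mu^{-1}x,\mu^{-1}\gamma)$. Since $T$ is $\pi$-pseudolocal, each $x\mapsto [T_x,\pi_x(g_x)]$ is a $C(X)$-compact operator of $C(X)\otimes H$, i.e. lies in $C(X)\otimes\maK(H)$; and the sum over $\mu$ is finite because $g$ has compact support in the $\Gamma$-variable. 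I would then recognize the resulting operator as $(S\rtimes\Gamma)\circ(\pi\rtimes\Gamma)(g)$-type expression, or more cleanly argue directly: writing $K:=[T\rtimes\Gamma,(\pi\rtimes\Gamma)(g)]$, the above shows $K=\sum_\mu (K_\mu\rtimes\Gamma)\circ(\pi\rtimes\Gamma)(\delta_\mu\otimes\text{(shift)})$ with each $K_\mu=[T,\pi(g(\bullet,\bullet,\mu))]\in\maK_{C(X)}(C(X)\otimes H)$; by the remarks before Corollary \ref{rtimesCompact}, $K_\mu\rtimes\Gamma$ is then a compact operator of $(C(X)\otimes H)\rtimes\Gamma$, and composing with the bounded (adjointable) operator $(\pi\rtimes\Gamma)(\cdots)$ keeps it compact, so a finite sum is compact. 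Density of $C_c(Z\times X\times\Gamma)$ in $C_0(Z\times X)\rtimes\Gamma$ and boundedness of all the operators involved then give the conclusion for every $g\in C_0(Z\times X)\rtimes\Gamma$.

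For part (2), the "only if" direction runs along the same lines but is easier: if $T$ is $\pi$-locally compact, then $\pi(g)T$ and $T\pi(g)$ lie in $\maK_{C(X)}(C(X)\otimes H)$ for all $g\in C_0(Z\times X)$, and the same fiberwise computation shows that $(\pi\rtimes\Gamma)(g)\,(T\rtimes\Gamma)$ and $(T\rtimes\Gamma)\,(\pi\rtimes\Gamma)(g)$ are, on the dense subalgebra, finite sums of compositions of operators of the form $(K_\mu\rtimes\Gamma)$ — with $K_\mu=\pi(g(\bullet,\bullet,\mu))T$ or $T\pi(\gamma^{-1}g(\bullet,\bullet,\mu))$ compact — with bounded operators, hence compact; extend by density. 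For the "if" direction, suppose $T\rtimes\Gamma$ is $\pi\rtimes\Gamma$-locally compact. Given $g\in C_0(Z\times X)$, view it as the element $\delta_{1_\Gamma}\otimes g$ of $C_0(Z\times X)\rtimes\Gamma$; then $(\pi\rtimes\Gamma)(\delta_{1_\Gamma}\otimes g)\,(T\rtimes\Gamma)$ is compact on $(C(X)\otimes H)\rtimes\Gamma$. Now I would compress by the maps $i_{1_\Gamma}^H,p_{1_\Gamma}^H$ of Proposition \ref{Descente}: a direct check from the defining formulas gives $p_{1_\Gamma}^H\circ(\pi\rtimes\Gamma)(\delta_{1_\Gamma}\otimes g)\circ i_{1_\Gamma}^H=\pi(g)$ and $p_{1_\Gamma}^H\circ(T\rtimes\Gamma)\circ i_{1_\Gamma}^H=T$ (the latter is already noted in the proof of Corollary \ref{rtimesCompact}), and more generally $p_{1_\Gamma}^H\circ(\pi\rtimes\Gamma)(\delta_{1_\Gamma}\otimes g)(T\rtimes\Gamma)\circ i_{1_\Gamma}^H=\pi(g)T$ since $i_{1_\Gamma}^H$ lands in the "$\gamma=1_\Gamma$" component and the convolution with $\delta_{1_\Gamma}\otimes g$ followed by $T\rtimes\Gamma$ preserves that component. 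By part (2) of Proposition \ref{Descente}, compressing a compact operator by $p_{1_\Gamma}^H,i_{1_\Gamma}^H$ yields a compact operator, so $\pi(g)T\in\maK_{C(X)}(C(X)\otimes H)$; the same with adjoints (or with $T\pi(g)=(\pi(g)^*T^*)^*$, using that $T^*\rtimes\Gamma=(T\rtimes\Gamma)^*$ is also locally compact) handles $T\pi(g)$. Hence $T$ is $\pi$-locally compact.

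The one genuine thing to be careful about — the step I expect to cost the most bookkeeping — is the "if" direction of (2): matching the convolution-algebra structure of $C_0(Z\times X)\rtimes\Gamma$ with the compression maps $i_{1_\Gamma}^H$, $p_{1_\Gamma}^H$ and verifying that the $\gamma=1_\Gamma$ component really is preserved under the relevant composites, so that the compression identity $p_{1_\Gamma}^H\circ(\pi\rtimes\Gamma)(\delta_{1_\Gamma}\otimes g)\,(T\rtimes\Gamma)\circ i_{1_\Gamma}^H=\pi(g)\,T$ holds on the nose. Everything else is a routine fiberwise manipulation combined with the already-established facts that $T\mapsto T\rtimes\Gamma$ sends compacts to compacts and is an isometric $*$-homomorphism, together with density of the compactly supported functions. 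I would also remark, as is standard, that it suffices to verify the commutator/local-compactness conditions on the dense $*$-subalgebra $C_c(Z\times X\times\Gamma)$ because all operators in sight are bounded and the ideal of compacts is norm-closed.
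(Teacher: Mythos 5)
Your proposal is correct and follows essentially the same route as the paper: the same fiberwise computation with $\Gamma$-equivariance of $T$ for part (1) and the forward direction of (2), and for the converse the same reduction via $\what f=f\otimes\delta_{1_\Gamma}$, using that $(\pi\rtimes\Gamma)(\what f)(T\rtimes\Gamma)=(\pi(f)T)\rtimes\Gamma$ and then compressing by $p_{1_\Gamma}^H,i_{1_\Gamma}^H$ — which is exactly the content of Proposition \ref{Descente}(2)/Corollary \ref{rtimesCompact} that the paper invokes. The only cosmetic difference is that you compress the product directly rather than first identifying it as $S\rtimes\Gamma$ for $S=\pi(f)T$ and citing the corollary.
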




\begin{proof}\ 
We have for  $f\in C_c(Z\times X\times\Gamma)$,  $T\in\maL_{C(X)}(C(X)\otimes H)^\Gamma$, $\xi\in C_c(X\times \Gamma, H)$ and $(x, \gamma)\in \maG$:
\begin{eqnarray*}
\left((\pi\rtimes\Gamma)(f)\circ (T\rtimes\Gamma)\right)(\xi)(x, \gamma)&=&\sum_{\mu\in\Gamma}\pi_x(f(\bullet, x, \mu))U_\mu T_{\mu^{-1} x}\left(\xi(\mu^{-1}x, \mu^{-1}\gamma)\right)\\
&=&\sum_{\mu\in\Gamma}\pi_x(f(\bullet, x, \mu))T_xU_\mu \left(\xi(\mu^{-1} x, \mu^{-1}\gamma)\right)
\mbox{  (because $T$ is $\Gamma$-equivariant)}
\end{eqnarray*}
while 
\begin{eqnarray*}
\left((T\rtimes\Gamma)\circ (\pi\rtimes\Gamma)(f)\right) (\xi)(x, \gamma)&=&T_x\left((\pi\rtimes\Gamma)(f)(\xi)(x,\gamma)\right)\\
&=&\sum_{\mu\in\Gamma}T_x\pi_x(f(\bullet, x, \mu))U_\mu\left(\xi(\mu^{-1} x,\mu^{-1}\gamma)\right)
\end{eqnarray*}
Hence we have 
\begin{eqnarray*}
\left[(\pi\rtimes\Gamma)(f), T\rtimes\Gamma\right] (\xi)(x, \gamma)&=&
\sum_{\mu\in\Gamma}\left[\pi_x(f(\bullet, x, \mu)) , T_x\right] U_\mu\left(\xi(\mu^{-1}x,\mu^{-1}\gamma)\right)
\end{eqnarray*}
If $T$ is locally compact, then each $\pi(f(\bullet, \bullet, \mu)) T$ and $T\pi(f(\bullet, \bullet, \mu))$ are compact operators on $C(X)\otimes H$. Similarly if $T$ is pseudolocal each $[\pi(f(\bullet, \bullet, \mu)), T]$ is a compact operator on $C(X)\otimes H$. 
This  easily allows to conclude.  
Let us prove now the converse in the second  item. 
Assume  that $T$ is now a $\Gamma$-invariant operator such that the operator $T\rtimes \Gamma$ is locally compact. This means that for any ${\what f}\in C_0(Z\times X)\rtimes\Gamma$, 
$$
(\pi\rtimes\Gamma)({\what f})(T\rtimes\Gamma)\text{ and }(T\rtimes\Gamma)(\pi\rtimes\Gamma)({\what f}),
$$ 
are compact operators of the Hilbert module $(C(X)\otimes H)\rtimes \Gamma$.

Let $f$ be an element of $C_c(Z\times X)$ and set ${\what f}:=f\otimes\delta_{1_\Gamma}$. 
We may apply the assumption to the function ${\what f}$ using the relation
$$
(\pi\rtimes \Gamma) ({\what f})\circ (T\rtimes \Gamma) = (\pi (f) \circ T)\rtimes \Gamma\text{ since }(\pi\rtimes \Gamma) ({\what f}) =\pi (f)\rtimes \Gamma.
$$
More precisely, we get  for any ${\what \xi}\in C_c(X\times \Gamma, H)$,  $(\pi\rtimes \Gamma) ({\what f}) (T\rtimes \Gamma) ({\what \xi} (x, \gamma)) = \pi_x(f(\bullet, x))T_x (\xi(x, \gamma))$. We are thus reduced to proving that if an element $S$ of $\maL_{C(X)} (C(X)\otimes H)$ satisfies that $S\rtimes \Gamma$ is $C^*(\maG)$-compact, then it is itself $C(X)$-compact. But this is the content of the second item of Proposition \ref{Descente} so that the proof is now complete.

\end{proof}

The following  stronger criterion is used in the proof of Proposition  \ref{equivalence2}. Recall that $e$ denotes the Mishchenko projection associated with our fixed cut-off function $c$.

\begin{lemma}\label{rtimesMishchenko}
A $\Gamma$-invariant operator $T\in\maL_{C(X)}(C(X)\otimes H)^\Gamma$  is locally compact if and only if the operators $(\pi\rtimes\Gamma)(e)(T\rtimes \Gamma)$ and $(T\rtimes \Gamma)(\pi\rtimes\Gamma)(e)$ are compact. \end{lemma}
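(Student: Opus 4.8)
The plan is to reduce the "only if" direction of Lemma \ref{rtimesMishchenko} to Lemma \ref{rtimes}(2), and to prove the "if" direction directly by exploiting the fact that the Mishchenko projection $e$ is, up to finitely many translates, a multiplier-unit for the cut-off-supported part of $C_0(Z\times X)\rtimes\Gamma$. First suppose $T$ is locally compact. Then by Lemma \ref{rtimes}(2) the operator $T\rtimes\Gamma$ is $\pi\rtimes\Gamma$-locally compact, i.e. $(\pi\rtimes\Gamma)(\what f)(T\rtimes\Gamma)$ and $(T\rtimes\Gamma)(\pi\rtimes\Gamma)(\what f)$ are $C^*(\maG)$-compact for every $\what f\in C_0(Z\times X)\rtimes\Gamma$. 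Since $e\in C_c(\Gamma, C_c(Z\times X))\subset C_0(Z\times X)\rtimes\Gamma$, applying this with $\what f=e$ gives immediately that $(\pi\rtimes\Gamma)(e)(T\rtimes\Gamma)$ and $(T\rtimes\Gamma)(\pi\rtimes\Gamma)(e)$ are compact. So this direction is essentially free.

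For the converse, assume $(\pi\rtimes\Gamma)(e)(T\rtimes\Gamma)$ and $(T\rtimes\Gamma)(\pi\rtimes\Gamma)(e)$ are compact; I want to conclude $T$ is locally compact, and by Lemma \ref{rtimes}(2) it suffices to show $T\rtimes\Gamma$ is $\pi\rtimes\Gamma$-locally compact. The key point is that for any $g\in C_c(Z\times X)$, the element $\what g:=g\otimes\delta_{1_\Gamma}$ can be recovered from $e$ by a finite sum of translated products: using $\sum_{\gamma\in\Gamma}\gamma c^2=1$ and the fact that $c$ is compactly supported, one has $g=\sum_{\gamma\in F}(\gamma c)\,g\,(\gamma c)$ for a finite set $F\subset\Gamma$ (those $\gamma$ with $\Supp(\gamma c)\cap\Supp(g)\neq\emptyset$), and correspondingly in the crossed product $\what g$ is a finite combination of terms of the form $(\pi\rtimes\Gamma)$-images of products $e\cdot h\cdot e$ with $h\in C_0(Z\times X)\rtimes\Gamma$ (after translating by appropriate group elements, which act as automorphisms/unitaries and preserve compactness). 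Concretely, writing $u_\gamma\in\maL((C(X)\otimes H)\rtimes\Gamma)$ for the operators implementing the $\Gamma$-action, one expresses $(\pi\rtimes\Gamma)(\what g)$ as a finite sum $\sum_{\gamma,\mu\in F} u_\gamma\,(\pi\rtimes\Gamma)(e)\,u_\gamma^*\,(\pi\rtimes\Gamma)(k_{\gamma,\mu})\,u_\mu\,(\pi\rtimes\Gamma)(e)\,u_\mu^*$ for suitable $k_{\gamma,\mu}$, so that $(\pi\rtimes\Gamma)(\what g)(T\rtimes\Gamma)$ — using that $T\rtimes\Gamma$ commutes with each $u_\gamma$ since $T$ is $\Gamma$-invariant — becomes a finite sum of operators each of which has $(\pi\rtimes\Gamma)(e)(T\rtimes\Gamma)$ or $(T\rtimes\Gamma)(\pi\rtimes\Gamma)(e)$ as a factor, hence is compact. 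The same argument handles $(T\rtimes\Gamma)(\pi\rtimes\Gamma)(\what g)$. Finally, a general $\what f\in C_0(Z\times X)\rtimes\Gamma$ is a norm limit of elements $\sum$ (finite) $g_i\otimes\delta_{\gamma_i}$ with $g_i\in C_c(Z\times X)$, and since translation by $\delta_{\gamma_i}$ is again implemented by the unitaries $u_{\gamma_i}$ (which $T\rtimes\Gamma$ commutes with), compactness passes from the $\what g$-case to all such finite sums and then, by continuity of the relevant multiplications and closedness of the compacts, to all $\what f$. This shows $T\rtimes\Gamma$ is locally compact, and Lemma \ref{rtimes}(2) finishes the proof.

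The main obstacle I anticipate is the bookkeeping in the middle step: writing down explicitly, at the level of $C_c(\Gamma,C_c(Z\times X))$, the finite decomposition of $\what g=g\otimes\delta_{1_\Gamma}$ in terms of $e$ and the group unitaries, and checking carefully that the finiteness (properness of the $\Gamma$-action on $Z$, compact support of $c$ and $g$) is genuinely used so that only finitely many compact summands appear. One must also be slightly careful that the operators $u_\gamma$ on the Hilbert module $(C(X)\otimes H)\rtimes\Gamma$ are adjointable (indeed unitary) and that $(T\rtimes\Gamma)u_\gamma=u_\gamma(T\rtimes\Gamma)$ — this is exactly $\Gamma$-invariance of $T$ read through the formula $(T\rtimes\Gamma)(\xi)(x,\gamma)=T_x\xi(x,\gamma)$ together with the defining formula for the $\Gamma$-action on $C_c(X\times\Gamma,H)$ — after which everything reduces to "compact $\times$ bounded $=$ compact" and finite sums. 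None of this is deep, but it is the part where an honest argument has to do real work rather than quote Lemma \ref{rtimes}.
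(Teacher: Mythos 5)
The easy direction and the framing are fine: local compactness of $T$ gives local compactness of $T\rtimes\Gamma$ by Lemma \ref{rtimes}(2), and evaluating at $\what f=e$ yields the two compactness statements; your observations that the unitaries $u_\gamma$ implementing the $\Gamma$-action on $(C(X)\otimes H)\rtimes\Gamma$ are adjointable and commute with $T\rtimes\Gamma$ exactly because $T$ is $\Gamma$-invariant are also correct and are used in the paper's argument. The gap is the middle step of the converse, and it is not bookkeeping: the finite decomposition you need does not exist. The identity $\sum_\gamma(\gamma c)^2=1$ does give $g=\sum_{\gamma\in F}(\gamma c)\,g\,(\gamma c)$, but the operators occurring there are the \emph{diagonal} multiplication operators $\pi(\gamma c)\rtimes\Gamma$, not the conjugates $u_\gamma(\pi\rtimes\Gamma)(e)u_\gamma^*$ of the Mishchenko projection. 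Since $e(\gamma)=c\cdot(\gamma c)$ is nonzero for every $\gamma$ with $\Supp(c)\cap\Supp(\gamma c)\neq\emptyset$, the projection $e$ (and each of its conjugates) is ``rank one in the $\Gamma$-direction'' rather than diagonal: in the regular representation at a point with trivial stabilizer it acts on $\ell^2(\Gamma)$ as the rank-one projection onto the vector $(c(z\alpha))_\alpha$. Consequently a finite sum $\sum_{\gamma,\mu}(u_\gamma e u_\gamma^*)\,k_{\gamma\mu}\,(u_\mu e u_\mu^*)$ has right support dominated by the span of finitely many such vectors, and this span does not contain the basis vectors $\delta_{\alpha}$ as soon as distinct translates of $c$ overlap (already for $\Gamma=\Z$ with $c$ supported on two consecutive translates one is led to a non-terminating recursion for the coefficients). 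So $g\otimes\delta_{1_\Gamma}$ is \emph{not} a finite combination of terms having a conjugate of $e$ adjacent to where $T\rtimes\Gamma$ sits, and your route to ``$T\rtimes\Gamma$ is $\pi\rtimes\Gamma$-locally compact'' breaks down at its key step. (A norm-limit version, i.e.\ showing $g\otimes\delta_{1_\Gamma}$ lies in the closed left ideal generated by the $u_\mu e u_\mu^*$, would be a genuinely different and nontrivial argument, which you do not supply.)

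The paper avoids any exact decomposition. From the hypothesis it deduces that the finite sum ${\what T}^f=\sum_\mu\bigl(\pi(\mu c)\rtimes\Gamma\bigr)(\pi\rtimes\Gamma)\bigl((f\otimes\delta_\mu)*e\bigr)(T\rtimes\Gamma)$ is compact (finitely many $\mu$ contribute by properness), computes that ${\what T}^f(\xi\otimes\delta_{1_\Gamma})=\sum_\gamma\bigl(\pi(f\,(\gamma c))\circ T\circ\gamma\bigr)(\xi)\otimes\delta_\gamma$, and then uses the slicing maps $i_{1_\Gamma}^H$ and $p_\gamma^H$ of Proposition \ref{Descente} to extract each ``Fourier coefficient'' $\pi(f\,(\gamma c))\circ T=p_\gamma^H\circ{\what T}^f\circ i_{1_\Gamma}^H\circ\gamma^{-1}$ and to prove it is a compact operator of $C(X)\otimes H$ (by approximating ${\what T}^f$ by finite-rank operators and slicing them). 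Summing $\pi(\gamma c)\pi(f\,(\gamma c))T$ over the finitely many contributing $\gamma$ then gives $\pi(f)T$ compact, and symmetrically for $T\pi(f)$. This coefficient-extraction device is precisely the ingredient missing from your argument; you would need it, or an equivalent substitute, to repair the proof.
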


\begin{proof}\ 
%
%
By using the second item of Lemma \ref{rtimes},  we only need to prove that if $T$ satisfies that
$$
(\pi\rtimes\Gamma)(e)(T\rtimes \Gamma)\text{ and }(T\rtimes \Gamma)(\pi\rtimes\Gamma)(e)\text{  are compact operators},
$$
then $T$ is locally compact. 
Recall that 
$$
e(z,  \gamma):=c(z) \, c(z\gamma).
$$
Let $f\in C_c(Z\times X)$ be a given continuous compactly supported function and let $\mu\in \Gamma$ be fixed. Then the function $f\otimes\delta_\mu$ belongs to $C_c(Z\times X\times \Gamma)$ and we may apply our assumption to deduce that $(\pi\rtimes\Gamma) ((f\otimes\delta_\mu)* e) (T\rtimes \Gamma)$ and $(T\rtimes \Gamma) (\pi\rtimes\Gamma) (e*(f\otimes\delta_\mu)) $ are compact operators. But we can compute, for the first relation for instance, and for $\xi\in C(X)\otimes H$ and $\gamma\in \Gamma$, then we get using that $T$ is $\Gamma$-invariant:
$$
(\pi\rtimes\Gamma) ((f\otimes\delta_\mu)* e) (T\rtimes \Gamma) (\xi\otimes\delta_{1_\Gamma}) (x, \gamma) = \pi_x (f (\bullet, x) (\mu c)(\gamma c)) T_x U_\gamma (\xi (\gamma^{-1}x)).
$$
If we compose on the left with the operator $\pi(\mu c)\rtimes \Gamma$, and we sum up over $\mu$, then only a finite number of such elements of $\Gamma$ are involved and we get 
$$
\sum_{\mu\in \Gamma} \left[ (\pi_x (\mu c)^2 \pi_x(\gamma c)\pi_x(f(\bullet, x))T_x U_\gamma (\xi (\gamma^{-1}x)) \right] = \pi_x (f(\bullet, x)(\gamma c)) T_x (U_\gamma\xi(\gamma^{-1}x)).
$$
Therefore, if we denote by ${\what T}^f$ the compact operator  
$$
{\what T}^f  := \sum_\mu \left[(\pi (\mu c)\rtimes\Gamma) (\pi\rtimes\Gamma)((f\otimes\delta_\mu)* e) (T\rtimes \Gamma)  \right],
$$
then the following equality holds 
$$
{\what T}^f (\xi\otimes \delta_{1_\Gamma}) = \sum_{\gamma} A_\gamma^{f, T} (\xi) \otimes\delta_\gamma,
$$
where $A_\gamma^{f, T}= \pi(f (\gamma c)) \circ T \circ \gamma\in \maL_{C(X)} (C(X)\otimes H)$, with the notation $\gamma (\xi) (x)=U_\gamma (\xi(\gamma^{-1} x))$. Notice that this latter operator $\gamma$ is not adjointable, and 
recall  as well the isometric embeddings $i_\gamma^H:C(X)\otimes H\hookrightarrow C^*(\maG)\otimes H$ given by $\xi\mapsto \xi\otimes \delta_\gamma$, and the contractions  $p_\gamma^H:C^*(\maG)\otimes H\to C(X)\otimes H$ given by $\what\xi\mapsto \xi (\bullet, \gamma)$, both introduced in Proposition \ref{Descente} and not adjointable.  We see from the above verifications that  the operator 
$$
p_\gamma^H\circ {\what T}^f\circ i_{1_\Gamma}^H\circ \gamma^{-1},
$$
is adjointable, it actually coincides with $\pi(f (\gamma c)) \circ T$. Moreover, the condition that ${\what T}^f$ is compact implies that $\pi(f (\gamma c)) \circ T$ is also compact.  Indeed, if the adjointable operator ${\what T}^f$  is approximated by finite sums of operators $\theta_{{\what \eta_i}, {\what \eta_i'}}$ then the operator $p_\gamma^H\circ {\what T}^f\circ i_{1_\Gamma}^H\circ \gamma^{-1}$ will be approximated by finite sums of operators of the type
$$
p_\gamma^H\circ\theta_{{\what \eta_i}, {\what \eta_i'}} \circ i_{1_\Gamma}^H\circ \gamma^{-1}.
$$
Now, a direct inspection shows that each
$$
p_\gamma^H\circ\theta_{{\what \eta}, {\what \eta'}} \circ i_{1_\Gamma}^H\circ \gamma^{-1} = \sum_\alpha \theta_{p_\alpha^H{\what\eta}, p_\alpha^H (\gamma{\what\eta}')},
$$
where the sum is finite, and hence $p_\gamma^H\circ {\what T}^f\circ i_{1_\Gamma}^H\circ \gamma^{-1}$ is a compact adjointable operator of the Hilbert $C(X)$-module $C(X)\otimes H$. This proves that for any $\gamma\in \Gamma$, the operator $\pi(f (\gamma c)) \circ T$ is  compact. Composing again  with $\pi (\gamma c)$ on the left and summing up over the finite set of contributing elements $\gamma\in \Gamma$ (recall that $f$ is compactly supported and $\Gamma$ acts properly on $Z$), we get
$$
\pi(f) \circ T \text{ is  a compact operator as allowed}.
$$
A similar argument, now starting with the hypothesis that $(T\rtimes \Gamma) (\pi\rtimes\Gamma) (e * (f\otimes\delta_\mu))$ is compact,  allows to prove that $T\circ \pi (f)$ is also a compact operator. 
\end{proof}

\medskip

\begin{proposition}\label{locally compact2}
With  the previous notations, we have
\begin{enumerate}
\item[•] For any $T\in D^*_\Gamma (X, Z, H)$, a lift of $T$ is a compact perturbation of the  operator $T_e\in \maL_{C^*(\maG)} (\maE^H)$ which is the conjugate  by the isomorphism $\phi$ of Lemma \ref{Smough2} of $(\pi\rtimes\Gamma)(e)(T\rtimes\Gamma)(\pi\rtimes\Gamma)(e)$.
\item[•] Moreover,  $C^*_\Gamma (\maE^H)$ coincides with the ideal of $C^*(\maG)$-compact operators, i.e. $C^*_\Gamma (\maE^H) = \maK_{C^*(\maG)}(\maE^H)$.
\end{enumerate}
\end{proposition}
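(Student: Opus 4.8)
The plan is to prove the two items separately, working throughout with the realization of $\maE^H$, provided by Lemmas \ref{Smough2} and \ref{CS general}, as the composition Hilbert module $e\bigl(C_0(Z\times X)\rtimes\Gamma\bigr)\otimes_{\pi\rtimes\Gamma}\bigl((C(X)\otimes H)\rtimes\Gamma\bigr)$. Here the left factor $E:=e\bigl(C_0(Z\times X)\rtimes\Gamma\bigr)$ is the finitely generated projective module generated by the single projection $e$, so the Remark following the Proposition after Lemma \ref{CS} applies even though $C_0(Z\times X)\rtimes\Gamma$ need not be unital: with $F:=(C(X)\otimes H)\rtimes\Gamma$, the operator $L_e\in\maL_{C^*(\maG)}(F,\maE^H)$ satisfies $L_eL_e^*=\id_{\maE^H}$, and an adjointable $T_\maE\in\maL_{C^*(\maG)}(\maE^H)$ is a lift of $T\rtimes\Gamma$ (equivalently of $T$, in the sense of Definitions \ref{def1 lift2}--\ref{def2 lift2}) precisely when $(L_f)^*T_\maE L_{f'}-(\pi\rtimes\Gamma)(\langle f,f'\rangle)(T\rtimes\Gamma)$ is $C^*(\maG)$-compact for all $f,f'\in E$.

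For the first item, I would begin by checking that the concrete operator $T_e$ of the statement is itself a lift of $T$. Since $T\in D^*_\Gamma(X,Z,H)$ is $\pi$-pseudolocal and $\Gamma$-equivariant, Lemma \ref{rtimes}(1) shows that $T\rtimes\Gamma$ is $\pi\rtimes\Gamma$-pseudolocal, i.e. $[T\rtimes\Gamma,(\pi\rtimes\Gamma)(g)]$ is $C^*(\maG)$-compact for every $g\in C_0(Z\times X)\rtimes\Gamma$. Using $ef=f$ and $f^*e=f^*$ for $f\in E$, a direct computation then gives $(L_f)^*T_e L_{f'}-(\pi\rtimes\Gamma)(\langle f,f'\rangle)(T\rtimes\Gamma)=(\pi\rtimes\Gamma)(f^*)\,[T\rtimes\Gamma,(\pi\rtimes\Gamma)(f')]$, which is compact; hence $T_e$ is a lift (alternatively, one verifies the two squares of Definition \ref{def2 lift2} directly). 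Now if $T_\maE$ is any lift of $T$, applying the above criterion to both $T_\maE$ and $T_e$ yields $(L_f)^*(T_\maE-T_e)L_{f'}\in\maK_{C^*(\maG)}(F)$ for all $f,f'\in E$; taking $f=f'=e$ and using $L_eL_e^*=\id$ gives
$$
T_\maE-T_e=L_e\,\bigl[(L_e)^*(T_\maE-T_e)L_e\bigr]\,L_e^*\in\maK_{C^*(\maG)}(\maE^H),
$$
so every lift of $T$ is a compact perturbation of $T_e$.

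For the second item, the inclusion $C^*_\Gamma(\maE^H)\subseteq\maK_{C^*(\maG)}(\maE^H)$ follows from the first: if $T\in C^*_\Gamma(X,Z,H)$ then $T$ is $\pi$-locally compact, so by Lemma \ref{rtimes}(2) $T\rtimes\Gamma$ is $\pi\rtimes\Gamma$-locally compact, whence $(\pi\rtimes\Gamma)(e)(T\rtimes\Gamma)$ — and therefore also $T_e$, its conjugate by $\phi$ — is compact; any lift of $T$ then equals $T_e$ plus a compact operator, hence is compact, and one concludes since $\maK_{C^*(\maG)}(\maE^H)$ is closed. For the reverse inclusion, note that $0\in C^*_\Gamma(X,Z,H)$ and that every $K\in\maK_{C^*(\maG)}(\maE^H)$ is a lift of $0$: the operators $K\circ(\pi\rtimes\Gamma)(f)$ and $(\pi\rtimes\Gamma)(f^*)\circ K$ are compositions of a compact operator with an adjointable one, hence compact, which is exactly the requirement of Definition \ref{def2 lift2} with $T=0$. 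Thus $K\in C^*_\Gamma(\maE^H)$, which gives $\maK_{C^*(\maG)}(\maE^H)\subseteq C^*_\Gamma(\maE^H)$ and the claimed equality.

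The only step carrying real content, once the earlier lemmas are available, is the verification that $T_e$ is a genuine lift — so that the ``uniqueness of lifts modulo compacts'' argument can be anchored on a concrete representative. This is why I would carry that verification out explicitly at the level of the composition module $E\otimes_{\pi\rtimes\Gamma}F$, where the algebra of the maps $L_f$ and the relation $L_eL_e^*=\id$ make it essentially mechanical; I do not anticipate any genuine obstacle.
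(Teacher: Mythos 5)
Your proof is correct and follows essentially the same route as the paper: both verify that $T_e$ is a lift by reducing to the pseudolocality of $T\rtimes\Gamma$ (Lemma \ref{rtimes}) together with $ef=f$, both use $L_eL_e^*=\id_{\maE^H}$ to show any two lifts differ by a compact, and both identify $C^*_\Gamma(\maE^H)$ with the compacts via local compactness of $T\rtimes\Gamma$ in one direction and the observation that compacts are lifts of $0$ in the other. The only cosmetic difference is that you anchor the argument on the $(L_f)^*(\cdot)L_{f'}$ criterion (the converse proposition plus its non-unital remark), whereas the paper checks the commuting diagrams of Definition \ref{lift} directly; these are equivalent by the paper's own preliminary results.
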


\medskip

\begin{proof}
Let $f\in e\left(C_0(Z\times X)\rtimes\Gamma\right)$, then $ef=f$ and we may write
$$
(\pi\rtimes\Gamma)(e)\left[T\rtimes\Gamma, (\pi\rtimes \Gamma)(f)\right] = (\pi\rtimes\Gamma)(e)(T\rtimes \Gamma)(\pi\rtimes\Gamma)(f) - (\pi\rtimes\Gamma)(f)(T\rtimes\Gamma).
$$
The commutator in the LHS is  a compact operator by Lemma \ref{rtimes}. Therefore the RHS is also a compact operator. {{Recall that the notation $(\pi\rtimes\Gamma)(f)$ in Definition \ref{def2 lift2} really means $\phi^{-1}  (\pi\rtimes\Gamma)(f)$ where $\phi$ is the isomorphism of Lemma \ref{Smough2}. Hence we deduce by definition of $T_e$ that}}
{{\begin{multline*}
\phi \left[T_e  (\phi^{-1}  (\pi\rtimes\Gamma)(f))  - (\phi^{-1}(\pi\rtimes\Gamma)(f)) (T\rtimes \Gamma)\right] \\ =  (\pi\rtimes\Gamma)(e)(T\rtimes \Gamma)(\pi\rtimes\Gamma)(e) (\pi\rtimes\Gamma)(f)  - (\pi\rtimes\Gamma)(f) (T\rtimes\Gamma)\\
=(\pi\rtimes\Gamma)(e)(T\rtimes \Gamma) (\pi\rtimes\Gamma)(f)  - (\pi\rtimes\Gamma)(f) (T\rtimes\Gamma)
\end{multline*}
So $T_e  (\phi^{-1}  (\pi\rtimes\Gamma)(f))  - (\phi^{-1}(\pi\rtimes\Gamma)(f)) (T\rtimes \Gamma)$ is a compact operator, and we have shown that the first diagram in Definition \ref{def2 lift2} commutes up to compact operators. Replacing $T$ by $T^*$ in this computation shows that   $T_e$ is indeed a lift of $T$.}} Now, notice that the operator $L_e\circ (L_e)^*$ is the identity of the Hilbert module range of $e$.  Hence, if $\what T$ is a lift of the zero operator, then 
$$
{\what T} = {\what T}\circ L_e\circ (L_e)^*  =( {\what T}\circ L_e - L_e\circ 0)\circ (L_e)^* \text{ is compact}.
$$
The second item is also proved similarly by using Lemma \ref{rtimes} and conjugation by the isomorphism $\phi$. More precisely, if $T\in C^*_\Gamma(X,Z,H)$, then $T\rtimes \Gamma$ is $\pi\rtimes\Gamma$-locally compact by Lemma \ref{rtimes}. Therefore, the operator $(\pi\rtimes\Gamma)(e)(T\rtimes\Gamma)$ is a compact operator on the Hilbert module $(C(X)\otimes H)\rtimes\Gamma$. On the other hand, any compact operator is a lift of any $T\in C^*_\Gamma (X, Z, H)$ since $L_f\circ T$ is then a compact operator. This ends the proof.
\end{proof}

Hence, the \emph{Roe-algebra} $D^*_\Gamma(\maE^H)$ of Definition \ref{DefRoe} associated with $Z$ and the $\Gamma$-equivariant representation in $H$ could as well be defined as the $C^*$-algebra closure in $\maL_{C^*(\maG)}(\maE^H)$ of the involutive algebra 
$$
\{T_e+C, T\in D^*_\Gamma (X, Z, H) \text{ and } C\in \maK_{C^*(\maG)} (\maE^H)\}.
$$
In the same way,  $C^*_\Gamma (\maE^H)$ could as well be defined as the ideal of compact operators on the  Connes-Skandalis Hilbert module $\maE^H$, but our description in terms of lifts will be useful in the sequel.

\medskip

\begin{remark}\
 It is understood in our missleading but simplified notations $``\maE^H"$ and  $``D^*_\Gamma (\maE^H)"$ that the cocompact $\Gamma$-proper  space $Z$ with the $\Gamma$-equivariant $C(X)$-representation $\pi$ of $C_0(Z\times X)$ are chosen. Moreover, the admissible crossed product functor is also implicit, so that our definitions include for instance  the reduced, the maximal, as well as the BGW crossed products.
 \end{remark}
 
 \begin{remark}\
Notice that if we use the reduced crossed product functor then we obtain the Roe algebra $D^*_\Gamma (Z, X, H)$  with its ideal $C^*_\Gamma (Z, X, H)$. 
 \end{remark}

 \begin{remark}\label{c-independent}\
 Suppose that $c'\in C_c(Z)$ is another cutoff function and denote by $e'$ the Mishchenko projection defined using $c'$. One  then defines  for  $\xi\in(\pi\rtimes\Gamma)(e)(C_c(\Gamma\times X,H))$ an element $\pphi(\xi)\in (\pi\rtimes\Gamma)(e')(C_c(\Gamma\times X,H))$ by setting
$$
\pphi(\xi)(\gamma,x)=\sum_{\mu\in\Gamma}\pi_x(c')\pi_x(\mu c)U_\mu\xi(\mu^{-1}\gamma,\mu^{-1}x),
$$
and it is then easy to check that $\pphi$ extends to a Hilbert module isomorphism  $
\pphi:(\pi\rtimes\Gamma){{(e)}}((C(X)\otimes H)\rtimes\Gamma)\overset{\simeq}\longrightarrow(\pi\rtimes\Gamma)(e')((C(X)\otimes H)\rtimes\Gamma)$
which intertwines the corresponding Roe algebras, for any admissible crossed product functor. 
\end{remark}

\medskip

We  have  the following important corollary of Proposition \ref{locally compact2}. 
%

\begin{proposition}
\label{equivalence2}\
Denoting by $[T_e]$ the class in the quotient Roe algebra $Q^*_\Gamma(\maE^H) = D^*_\Gamma(\maE^H)\,/\, \maK (\maE^H)$ of the operator $T_e$, the  map $T\mapsto [T_e]$ induces a $C^*$-algebras isomorphism
$$
\theta: Q^*_\Gamma (Z, X, H) \longrightarrow Q^*_\Gamma(\maE^H).
$$
Said differently, the quotient $C^*$-algebra $Q^*_\Gamma(\maE^H)$ does not depend, up to canonical isomorphism,  on the choice of admissible crossed product.
\end{proposition}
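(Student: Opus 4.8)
The plan is to exhibit $\theta$ as the descent of an explicit $*$-homomorphism defined on $D^*_\Gamma(X,Z,H)$ and then to check bijectivity directly. Throughout write $\what e:=(\pi\rtimes\Gamma)(e)$ and $\what T:=T\rtimes\Gamma$ for $T\in\maL_{C(X)}(C(X)\otimes H)$; recall from Proposition~\ref{locally compact2} that $T_e$ is the conjugate under the Hilbert module isomorphism $\phi$ of Lemma~\ref{Smough2} of the operator $\what e\,\what T\,\what e$, that $T_e$ is a lift of $T$, and that every lift of $T$ is a compact perturbation of $T_e$. First I would verify that $T\mapsto[T_e]$ is a $*$-homomorphism $D^*_\Gamma(X,Z,H)\to Q^*_\Gamma(\maE^H)$. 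Additivity and $(T^*)_e=(T_e)^*$ are immediate, since $T\mapsto\what T$ is a $*$-homomorphism, $\what e\,\what T^*\,\what e=(\what e\,\what T\,\what e)^{*}$, and conjugation by $\phi$ preserves adjoints and carries compacts to compacts. Multiplicativity holds only modulo compacts, and this is the one place using pseudolocality: every $T\in D^*_\Gamma(X,Z,H)$ is $\pi$-pseudolocal, so $[\what e,\what T]\in\maK$ by Lemma~\ref{rtimes}(1), whence (using $\what e^2=\what e$) the operator $(1-\what e)\,\what T\,\what e$ is compact and therefore $\what e\,\what S\,\what T\,\what e-\what e\,\what S\,\what e\,\what T\,\what e=\what e\,\what S\,(1-\what e)\,\what T\,\what e$ is compact, i.e. $[(ST)_e]=[S_e]\,[T_e]$.

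Next I would check that this homomorphism annihilates $C^*_\Gamma(X,Z,H)$, so that it factors through $Q^*_\Gamma(Z,X,H)$ to yield $\theta$: if $T\in C^*_\Gamma(X,Z,H)$ then $T_e$ is a lift of a locally compact operator, hence lies in $C^*_\Gamma(\maE^H)=\maK_{C^*(\maG)}(\maE^H)$ by Proposition~\ref{locally compact2}, so $[T_e]=0$ (equivalently, $\what T$ is $\pi\rtimes\Gamma$-locally compact by Lemma~\ref{rtimes}(2), forcing $\what e\,\what T\,\what e$ compact). Being a $*$-homomorphism of $C^*$-algebras, $\theta$ is automatically contractive and its range is a closed $*$-subalgebra. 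For surjectivity, recall that $D^*_\Gamma(\maE^H)$ is by definition the closure of the set of lifts of operators in $D^*_\Gamma(X,Z,H)$, which by Proposition~\ref{locally compact2} equals $\overline{\{\,T_e+K:\ T\in D^*_\Gamma(X,Z,H),\ K\in\maK(\maE^H)\,\}}$. Pushing this forward along the continuous quotient map $q\colon D^*_\Gamma(\maE^H)\to Q^*_\Gamma(\maE^H)$ shows that $\{[T_e]:T\in D^*_\Gamma(X,Z,H)\}$ is dense in $Q^*_\Gamma(\maE^H)$; since it lies inside the closed range of $\theta$, that range is all of $Q^*_\Gamma(\maE^H)$.

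The only step with any content is injectivity, and it reduces to Lemma~\ref{rtimesMishchenko}. Assume $\theta([T])=0$, i.e. $T_e\in\maK(\maE^H)$; conjugating by $\phi$ (and passing through the corner $\what e\big((C(X)\otimes H)\rtimes\Gamma\big)$) shows $\what e\,\what T\,\what e$ is compact. Using $[\what e,\what T]\in\maK$ and $\what e^2=\what e$ once more, $\what e\,\what T=\what e\,\what T\,\what e+\what e\,\what T\,(1-\what e)$ with $\what e\,\what T\,(1-\what e)$ compact, and symmetrically $\what T\,\what e$ is compact. Lemma~\ref{rtimesMishchenko} then gives that $T$ is $\pi$-locally compact, and since $T\in D^*_\Gamma(X,Z,H)$ the characterization of the ideal recalled earlier (membership in $C^*_\Gamma(X,Z,H)$ is equivalent to $\pi(g)T$ being $C(X)$-compact for all $g$) forces $T\in C^*_\Gamma(X,Z,H)$, i.e. $[T]=0$ in $Q^*_\Gamma(Z,X,H)$. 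Thus $\theta$ is a bijective $*$-homomorphism, hence a $C^*$-isomorphism; since $Q^*_\Gamma(Z,X,H)$ was defined using only the reduced data, $Q^*_\Gamma(\maE^H)$ is independent of the admissible crossed product up to this canonical isomorphism. There is no genuinely hard estimate here — the analytic content was front-loaded into Lemmas~\ref{rtimes} and~\ref{rtimesMishchenko} and Proposition~\ref{locally compact2} — so the main point requiring care is simply tracking compact-perturbation identities across the isomorphisms $\phi$ and $\alpha$ and the compression by $\what e$.
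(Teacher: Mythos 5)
Your proof is correct and follows essentially the same route as the paper's: the same map $T\mapsto[T_e]$, multiplicativity modulo compacts via the pseudolocality statement of Lemma \ref{rtimes}(1), surjectivity from the description of $D^*_\Gamma(\maE^H)$ in Proposition \ref{locally compact2}, and injectivity by reducing $T_e\in\maK(\maE^H)$ to the compactness of $\what e\,\what T$ and $\what T\,\what e$ and then invoking Lemma \ref{rtimesMishchenko}. The only difference is that you spell out a few small verifications (closedness of the range, the commutator manipulations, the corner identification via $\phi$) that the paper leaves implicit.
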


\begin{proof}\
The map 
$D^*_\Gamma(Z, X, H) \rightarrow  Q^*_\Gamma(\maE^H)$ defined  by $T \mapsto [T_e]$ is easily seen to be a $C^*$-algebra homomorphism, using that the operator $T\rtimes\Gamma$ is $\pi\rtimes\Gamma$-pseudolocal, thanks to the first item of Lemma \ref{rtimes}. Using the first item of Proposition \ref{locally compact2}, we observ that $\theta$ is onto. Using Proposition \ref{locally compact2} again, we deduce that $\Ker(\theta)$ contains $C^*_\Gamma(X,Z,H)$. If we assume that $T_e$ is a compact operator of the Hilbert module $\maE^H$, then the operators $(\pi\rtimes\Gamma)(e) (T\rtimes \Gamma)$ and $(T\rtimes \Gamma)(\pi\rtimes\Gamma)(e)$ are compact. {{Indeed, since $T_e$ is a lift of $T$, in particular the operator 
$$
T_e (\phi^{-1} (\pi\rtimes\Gamma)(e)) - (\phi^{-1}(\pi\rtimes\Gamma)(e))(T\rtimes\Gamma)
$$ 
is a compact operator. This shows that $(\pi\rtimes\Gamma)(e)(T\rtimes \Gamma)$ is compact, and hence also that $(\pi\rtimes\Gamma)(e)(T\rtimes \Gamma)$ is compact.}} Since $T$ is $\Gamma$-invariant, we may apply  Lemma \ref{rtimesMishchenko} to conclude that $T$ is $\pi$-locally compact, and eventually that $T$ belongs to $C^*_\Gamma (Z, X, H)$. 
Therefore, the induced map $\theta:Q^*_\Gamma(X,Z,H) \to Q^*_\Gamma(\maE^H)$ is a $C^*$-algebra isomorphism  as claimed.
\end{proof}

\medskip

\begin{definition}[The HR-sequence for $Z, H$]\label{MaximalHRsequence}\
Associated with our previous $\Gamma$-equivariant data $(Z, H, \pi)$ over the groupoid $\maG=X\rtimes \Gamma$ and with the admissible crossed product functor $\rtimes\Gamma$, the  Higson-Roe short exact sequence of $C^*$-algebras
is the following exact sequence obtained via the above identifications
$$
0\to \maK_{C^*(\maG)}(\maE^H)\hookrightarrow D^*_\Gamma(\maE^H)\longrightarrow  {{Q^*_\Gamma (Z, X, H)}}\to 0.
$$
\end{definition}

\medskip

\begin{remark}\
Again, although not specified in our notation, it is worth pointing out  that $\maE^H$ as well as $D^*_\Gamma(\maE^H)$ and $Q^*_\Gamma(\maE^H)$ do depend on $Z$ and on the $\Gamma$-equivariant $C(X)$-representation $\pi$. 
\end{remark}

\medskip

\color{black}

\subsection{$K$-theory of admissible Roe algebras}

We shall now prove that, if we amplify  the representation $\pi$, say if we replace it by the very ample representation in the Hilbert space $H\otimes\ell^2(\Gamma)^\infty$ (where $\ell^2(\Gamma)^\infty=\ell^2(\Gamma)\otimes \ell^2(\N)$),  the K-theory groups $K_*(D^*_\Gamma(\maE^H))$ become  independent of the choice of the representation.

%
%

Let us fix unitary representations $U :\Gamma\to\maU(H)$ and $U' :\Gamma\to\maU(H')$, and  {(fiberwise) faithful and} non-degenerate $\Gamma$-equivariant $C(X)$-representations  $\pi: C_0(Z\times X)\to\maL_{C(X)}(C(X)\otimes H)$ and $\pi': C_0(Z\times X)\to\maL_{C(X)}(C(X)\otimes H')$ as before. We also assume that $H$ and $H'$ are separable Hilbert spaces and strongly amplify the representations $(\pi, U, H)$ and $(\pi', U', H')$ as in \cite{BenameurRoyPPV} into
$$
({\what\pi}, {\what U}, {\what H})\text{ and } ({\what\pi}', {\what U}', {\what H}'), where
$$
\begin{itemize}
\item ${\what H}=H\otimes\ell^2(\Gamma)^\infty$ and ${\what H}'=H'\otimes\ell^2(\Gamma)^\infty$;
\item $\widehat{\pi}$ and  $\widehat{\pi}'$ are the (very ample) representations of $C_0(Z\times X)$ on $C(X)\otimes {\what H}$ and $C(X)\otimes {\what H}'$ respectively, obtained by tensoring $\pi$ and  $\pi'$ by the identity on $\ell^2(\Gamma)^\infty$.
\item $\widehat{U}$ and  $\widehat{U}'$ are the unitary representations of $\Gamma$ on ${\what H}$ and  ${\what H}'$ respectively, which are obtained by tensoring $U$ and $U'$ by right regular representation of $\Gamma$ on $\ell^2(\Gamma)$, and then by the identity on $\ell^2(\N)$.
\end{itemize}

\medskip
{It is then an obvious observation that the new representations $\what\pi$ and $\what\pi '$ are (fiberwise) ample. Recall the following theorem from \cite{BenameurRoyPPV}.}

\medskip

\begin{theorem}\cite{BenameurRoyPPV}
\label{BRPPV}\
{{Assume that the compact space $X$ is metric and finite dilmensional. Recall that $\pi$ and $\pi'$ are (fiberwise) faithful and non-degenerate representations of $C_0(Z)$ in $C(X)\otimes H$ and $C(X)\otimes H'$ respectively.}} Denote as well by  $\widehat{\pi}$ and $\widehat{\pi}'$ the trivially extended representations $\begin{pmatrix} \widehat{\pi} & 0\\
0 & 0\end{pmatrix}$
and 
$\begin{pmatrix} \widehat{\pi}' & 0\\
0 & 0\end{pmatrix}$
on $C(X)\otimes ({\what H}\oplus {\what H}')$ and $C(X)\otimes ({\what H}'\oplus {\what H})$ respectively. Then 
there exists a unitary operator
$$
V:C(X)\otimes ({\what H}\oplus {\what H}')\longrightarrow C(X)\otimes ({\what H}'\oplus {\what H})\text{ such that }
$$
\begin{enumerate}
\item $V$ is $\Gamma$-equivariant;
\item $V$ has finite propagation;
\item For any $g\in C_0(Z\times X)$, $V^*\widehat{\pi}'(g)V-\widehat{\pi}(g)\in\maK_{C(X)}(C(X)\otimes ({\what H}\oplus {\what H}'))$.
\end{enumerate}
\end{theorem}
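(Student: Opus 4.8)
The plan is to deduce this from an equivariant, $C(X)$-linear, finite-propagation refinement of Voiculescu's theorem. The first step is to remove the asymmetry between $\widehat H$ and $\widehat H'$: because $\widehat U=U\otimes\rho\otimes\id$ and $\widehat U'=U'\otimes\rho\otimes\id$, Fell's absorption principle yields $\Gamma$-equivariant unitaries identifying $(\widehat H,\widehat U)$ and $(\widehat H',\widehat U')$ with one and the same countable multiple $\mathcal H_\Gamma:=\ell^2(\Gamma)\otimes\ell^2(\N)$ of the left regular representation; these unitaries act trivially in the ``$Z$-direction'', so after conjugating by $\id_{C(X)}\otimes(\text{Fell unitary})$ we may assume $\widehat H=\widehat H'=\mathcal H_\Gamma$, with $\widehat\pi,\widehat\pi'$ now two fiberwise ample $\Gamma$-equivariant $C(X)$-representations of $C_0(Z\times X)$ on $C(X)\otimes\mathcal H_\Gamma$ of infinite multiplicity (the $\ell^2(\N)$-factor). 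The trivially extended representations then live on the common module $C(X)\otimes(\mathcal H_\Gamma\oplus\mathcal H_\Gamma)$; since the ``kernel parts'' of $\widehat\pi\oplus 0$ and of $\widehat\pi'\oplus 0$ are $\Gamma$-equivariantly isomorphic copies of $C(X)\otimes\mathcal H_\Gamma$, and since an infinite-multiplicity fiberwise ample representation absorbs a degenerate summand, the theorem reduces to the following: \emph{any two fiberwise ample $\Gamma$-equivariant $C(X)$-representations of $C_0(Z\times X)$ on $C(X)\otimes\mathcal H_\Gamma$ are conjugate, modulo $\maK_{C(X)}$, by a $\Gamma$-equivariant unitary of finite propagation.}

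To prove this reduced statement I would run the telescoping construction of Voiculescu's theorem, with two adaptations. The analytic input is a quasicentral approximate unit $(u_n)_{n\ge1}$ for $\maK_{C(X)}(C(X)\otimes\mathcal H_\Gamma)$ inside $\maL_{C(X)}$ which is simultaneously (i) $\Gamma$-invariant, (ii) asymptotically commuting with $\widehat\pi(C_0(Z\times X))$, with $\widehat\pi'(C_0(Z\times X))$ and with $C(X)$, and (iii) of finite propagation, the propagation of $u_n$ being bounded by a constant depending only on $n$. Properties (i)--(ii) are obtained by the usual averaging argument / the equivariant Kasparov technical theorem (here one uses that $X$ is metric and finite dimensional, as in \cite{BenameurRoyPPV}); property (iii) is where the geometry of $Z$ enters: since $\Gamma$ acts properly and cocompactly on the proper metric space $Z$, one fixes a $\Gamma$-invariant locally finite cover of $Z$ by sets of uniformly bounded diameter, together with a subordinate $\Gamma$-invariant partition of unity, and assembles each $u_n$ from local approximate units attached to the pieces; local finiteness of the cover then keeps $u_n$ of uniformly controlled propagation while $\Gamma$-equivariance is built in.

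Given such $(u_n)$, set $v_n:=(u_n-u_{n-1})^{1/2}$ with $u_0:=0$. Each ``block'' $v_n(C(X)\otimes\mathcal H_\Gamma)$ is $\Gamma$-invariant and carries asymptotically $C_0(Z\times X)$-invariant compressions of $\widehat\pi$ and of $\widehat\pi'$; fiberwise ampleness (with infinite multiplicity), in its $C(X)$-linear $\Gamma$-equivariant form established in \cite{BenameurRoyPPV}, furnishes on each block a $\Gamma$-equivariant partial isometry matching the $\widehat\pi$-compression with the $\widehat\pi'$-compression up to a $C(X)$-compact error of norm $<2^{-n}$, and — this is the second adaptation — the matching can be carried out within a bounded neighbourhood of each cover piece, so that the partial isometry has propagation bounded in terms of the diameter bound of the cover. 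Summing these partial isometries directly produces a single operator $V$, not merely an approximating sequence; it is $\Gamma$-equivariant, of finite propagation (the block-propagations are uniformly bounded), a unitary modulo $\maK_{C(X)}$, and satisfies $V^*\widehat\pi'(g)V-\widehat\pi(g)\in\maK_{C(X)}$ for all $g\in C_0(Z\times X)$. A compact perturbation makes $V$ an honest unitary, and undoing the Fell-absorption identifications gives the unitary of the theorem.

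The main obstacle is point (iii) above, combined with the requirement that the matching partial isometries on the blocks have \emph{uniformly} bounded propagation: $\Gamma$-equivariance (for the infinite group $\Gamma$) and control of propagation pull in opposite directions, and reconciling them is exactly the place where properness and cocompactness of the $\Gamma$-action on $Z$ are indispensable — they guarantee a $\Gamma$-invariant cover by bounded-diameter pieces with respect to which all $\Gamma$-translations are coarsely controlled. Everything else (the averaging yielding $\Gamma$-invariance of $(u_n)$, the $C(X)$-linear bookkeeping, the block matching lemma) is the routine $X\rtimes\Gamma$-analogue of the classical arguments, which is why the statement is quoted from \cite{BenameurRoyPPV} rather than reproved here.
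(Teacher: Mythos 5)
The paper does not actually prove this statement: it is quoted from \cite{BenameurRoyPPV}, and the only thing added here is the remark that a faithful non-degenerate $\pi$ becomes ample after the $\ell^2(\N)$-amplification, so that the ampleness hypothesis of the cited theorem is satisfied. Measured against the proof in that reference, your sketch does reproduce the top-level architecture: amplify by $\ell^2(\Gamma)\otimes\ell^2(\N)$ so that the unitary part of the data is absorbed, reduce to an equivariant, propagation-controlled Voiculescu-type absorption for two fiberwise ample $C(X)$-representations of $C_0(Z\times X)$, and use properness and cocompactness of the $\Gamma$-action on $Z$ both for the averaging that produces $\Gamma$-invariance and for the bounded, $\Gamma$-invariant covers that control propagation. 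These are indeed the ingredients of the equivariant PPV theorem.

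The genuine gap is the closing claim that the ``$C(X)$-linear bookkeeping'' and the ``block matching lemma'' are routine analogues of the classical arguments. The $C(X)$-parametrized Voiculescu theorem is precisely the content of \cite{PPV1, PPV2} and is not a routine fiberwise adaptation: applying Voiculescu's theorem fiberwise yields unitaries $V_x$ with no continuity in $x$, and the obstruction to assembling them into an adjointable operator on the Hilbert $C(X)$-module is exactly where the hypothesis that $X$ is compact metric and \emph{finite dimensional} is consumed (it enters through the classification of homogeneous $X$-extensions, not through the construction of a quasicentral approximate unit, which is available for any compact $X$). Your sketch locates the finite-dimensionality hypothesis in the approximate-unit step, which is the wrong place, and the ``matching of compressions on each block $v_n(C(X)\otimes\mathcal{H}_\Gamma)$'' that you invoke is itself an instance of the full PPV absorption theorem rather than a consequence of fiberwise ampleness — so the argument is circular at that point unless the PPV machinery is imported wholesale. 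A secondary caveat: the Fell unitary $\xi\otimes\delta_\gamma\mapsto U_\gamma^{*}\xi\otimes\delta_\gamma$ does not act ``trivially in the $Z$-direction''; conjugating $\widehat\pi$ by it translates the $C_0(Z\times X)$-action along the group orbit, so the notion of propagation has to be re-examined after that identification. This is repairable (the reduced statement one lands on is the same), but it is not free as written.
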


\medskip

{\begin{remark}
In \cite{BenameurRoyPPV}, this theorem was more precisely stated under the stronger assumption that  the representations $\pi$ and $\pi'$ are (fiberwise) ample. It is worth pointing out  though that if $\pi$ is a faithful and non-degenerate representation, then $\pi^\infty=\pi\otimes\id_{\ell^2(\N)}$ is ample and unitarily equivalent to $(\pi^\infty)^\infty$ through a $\Gamma$-equivariant conjugation isomorphism.
\end{remark}} 

Notice that the unitary representations ${\what U}$ and ${\what U}'$ yield the unitary representations ${\what U}\oplus {\what U}'$ and ${\what U}'\oplus {\what U}$.

\medskip

Recall that we then have the corresponding representations 
\begin{multline*}
\what\pi\rtimes\Gamma:C_0(Z\times X)\rtimes\Gamma\longrightarrow \maL_{C^*(\maG)}\left(\left(C(X)\otimes ({\what H}\oplus {\what H}')\right)\rtimes\Gamma\right)
\mbox{  and  }\\
\what\pi'\rtimes\Gamma :C_0(Z\times X)\rtimes\Gamma\longrightarrow \maL_{C^*(\maG)}\left(\left(C(X)\otimes ({\what H}'\oplus {\what H})\right)\rtimes\Gamma\right)
\end{multline*}
which are also the trivial extensions, and are given  by the formulae
\begin{multline*}
(\what\pi\rtimes\Gamma)(g)(\xi)(x, \gamma)=\sum_{\mu\in\Gamma}\widehat{\pi}_x(g(\bullet, x, \mu))\widehat{U}_\mu\,\xi(\mu^{-1}x, \mu^{-1}\gamma)
\mbox{  and  similarly }\\
(\what\pi'\rtimes\Gamma)'(g)(\xi)(x, \gamma)=\sum_{\mu\in\Gamma}\widehat{\pi}'_x(g(\bullet, x, \mu))\widehat{U}'_\mu\, \xi(\mu^{-1}x, \mu^{-1}\gamma).
\end{multline*}
We also have the  trivial identifications
\begin{multline*}
(\what\pi\rtimes\Gamma)(e)((C(X)\otimes ({\what H}\oplus {\what H}'))\rtimes\Gamma)\simeq
(\what\pi\rtimes\Gamma)(e)((C(X)\otimes {\what H})\rtimes\Gamma)\\
\mbox{  and  }
(\what\pi'\rtimes\Gamma)(e)((C(X)\otimes ({\what H}'\oplus {\what H}))\rtimes\Gamma)\simeq
(\what\pi'\rtimes\Gamma)(e)((C(X)\otimes {\what H}')\rtimes\Gamma).
\end{multline*}
Recall that an element $\eta\in C_c(X\times\Gamma, {\what H})$ is $\Gamma$-invariant if
 $$
\eta (\gamma_0 x, \gamma_0 \gamma) = {\what U}_{\gamma_0} \eta (x, \gamma), \quad \forall (x, \gamma, \gamma_0)\in X\times \Gamma^2.
 $$
 Denoting as usual by $C(X\times\Gamma,  {\what H})^\Gamma$ the space of continuous $\Gamma$-invariant elements, it will be convenient to denote by 
$$
\maP: C_c(X\times\Gamma,  {\what H}) \rightarrow C(X\times\Gamma,  {\what H})^\Gamma \quad  \text{ and }\quad \maP' : C_c(X\times\Gamma,  {\what H}') \rightarrow C(X\times\Gamma,  {\what H}')^\Gamma
$$
 the well defined average linear map given by 
 $$
 \maP (\xi) (x, \gamma) := \sum_{\mu\in \Gamma} {\what \pi} (\mu c)\,  {\what U}_\mu \left(\xi (\mu^{-1}x, \mu^{-1}\gamma)\right), \text{ for }\xi\in C_c(X\times \Gamma, {\what H})
 $$
 and similarly for $\maP'$. That the ranges of $\maP$ and $\maP'$ are composed of $\Gamma$-invariant  elements is an obvious verification. Then we may rewrite some of the previous operators in terms of the average operators $\maP$ and $\maP'$, we have for instance on compactly supported elements
  $$
 ({\what \pi}\rtimes \Gamma) (e) = ({\what\pi} (c)\rtimes \Gamma) \circ \maP, \quad ({\what \pi}'\rtimes \Gamma) (e) = ({\what\pi}' (c)\rtimes \Gamma) \circ \maP'.
 $$


\medskip

\begin{lemma}\label{V-W}\
The unitary operator $V$ induces a well defined unitary operator 
$$
W: ({\what\pi}\rtimes\Gamma) (e) \left((C(X)\otimes {\what H})\rtimes \Gamma\right) \longrightarrow ({{\what\pi}'}\rtimes\Gamma) (e) \left((C(X)\otimes {\what H}')\rtimes \Gamma\right)
$$
obtained by restricting the adjointable operator $W_0:(C(X)\otimes {\what H})\rtimes \Gamma \rightarrow (C(X)\otimes {\what H}')\rtimes \Gamma$ induced  by 
$$
W_0:=\left(({\what \pi}'(c)\circ V)\rtimes \Gamma\right)\circ \maP.
$$
Moreover, $W^*=W^{-1}$ is also the restriction of the adjoint $W_0^*$, this latter being itself induced by  $W_0^*:=\left(({\what \pi}(c)\circ V^*)\rtimes \Gamma\right)\circ \maP'$.
\end{lemma}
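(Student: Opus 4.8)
The plan is to prove everything by hand on the dense subspaces $C_c(X\times\Gamma,\what H)$ and $C_c(X\times\Gamma,\what H')$, in the style of Lemmas \ref{Smough2} and \ref{CS general}, and then to bootstrap to boundedness and extend by continuity. Write $W_0^\sharp:=\big((\what\pi(c)\,V^*)\rtimes\Gamma\big)\circ\maP'$ for the candidate adjoint. First I would check that $W_0$ and $W_0^\sharp$ are well defined as linear maps $C_c(X\times\Gamma,\what H)\to C_c(X\times\Gamma,\what H')$ and back: for $\xi\in C_c$ the section $\maP\xi$ is $\Gamma$-invariant and not compactly supported, but $\what\pi'(c)\,V\,\what\pi(\mu c)$ vanishes for $\mu$ outside a finite subset of $\Gamma$ because $c$ has compact support, $V$ has some finite propagation $R$, and $\Gamma$ acts properly on $Z$; hence $W_0\xi=\sum_{\mu}\big(\what\pi'(c)\,V\,\what\pi(\mu c)\big)\,\what U_\mu\,\xi(\mu^{-1}\cdot,\mu^{-1}\cdot)$ is a finite sum lying in $C_c(X\times\Gamma,\what H')$, and likewise for $W_0^\sharp$ (noting $V^*$ has propagation $\le R$ as well).

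Then come two algebraic identities, both checked by direct computation on $C_c$. The first is $\langle W_0\xi,\eta\rangle=\langle\xi,W_0^\sharp\eta\rangle$: expand both sides with the crossed-product inner product formula, push the $\what U_\mu$'s past the $\what\pi(\mu c)$'s and past $V$ using the equivariance relations $\what U_\mu\,\what\pi_{\mu^{-1}x}(g)=\what\pi_x(\mu g)\,\what U_\mu$ (and their $\what\pi'$, $\what U'$ analogues), together with $\mu^{-1}(\mu c)=c$ and $V_{\nu x}\what U_\mu=\what U'_\mu V_{\mu^{-1}\nu x}$, and then reindex the double $\Gamma$-sum by $\rho=\mu^{-1}\nu$; what survives is exactly $\maP'\eta$, giving the identity. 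The second is $W_0^\sharp W_0=(\what\pi\rtimes\Gamma)(e)$ and $W_0 W_0^\sharp=(\what\pi'\rtimes\Gamma)(e)$; its crux is the pointwise identity $\maP'(W_0\xi)=(V\rtimes\Gamma)\,\maP\xi$ (and its mirror $\maP(W_0^\sharp\eta)=(V^*\rtimes\Gamma)\,\maP'\eta$), which follows from the same equivariance moves followed by the cut-off identity $\sum_{\mu}(\mu c)^2=1$ — it is this that produces the factor $\sum_\mu\what\pi'_x\big((\mu c)^2\big)=\Id$. Composing on the left with $\what\pi(c)\rtimes\Gamma$, using $V^*V=VV^*=\Id$ and the relation $(\what\pi\rtimes\Gamma)(e)=(\what\pi(c)\rtimes\Gamma)\circ\maP$ recorded just above, then gives the two products.

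Now I would bootstrap. Applying the first identity with $\eta=W_0\xi\in C_c$ and then the second, $\langle W_0\xi,W_0\xi\rangle=\langle\xi,W_0^\sharp W_0\xi\rangle=\langle\xi,(\what\pi\rtimes\Gamma)(e)\,\xi\rangle\le\langle\xi,\xi\rangle$, so $W_0$ extends to a contraction from $(C(X)\otimes\what H)\rtimes\Gamma$ to $(C(X)\otimes\what H')\rtimes\Gamma$, and symmetrically $W_0^\sharp$ extends to a contraction; the first identity and its adjoint then extend by continuity, so $W_0$ is adjointable with $W_0^*=W_0^\sharp=\big((\what\pi(c)V^*)\rtimes\Gamma\big)\circ\maP'$, and the second identity extends too. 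Hence $W_0$ is a partial isometry with initial projection $(\what\pi\rtimes\Gamma)(e)$ and final projection $(\what\pi'\rtimes\Gamma)(e)$; restricting it therefore yields a unitary $W$ from the range of $(\what\pi\rtimes\Gamma)(e)$ onto the range of $(\what\pi'\rtimes\Gamma)(e)$ whose two-sided inverse is the restriction of $W_0^*$, and $W^*=W^{-1}$ is this same restriction — which is the assertion. (Only that $V$ is $\Gamma$-equivariant, unitary and of finite propagation is used here; the property $V^*\what\pi'(g)V-\what\pi(g)\in\maK_{C(X)}(C(X)\otimes(\what H\oplus\what H'))$ is not needed for this lemma, and will enter only later when one shows that $W$ conjugates the Roe algebras.)

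I expect the only real friction to be bookkeeping — keeping track of which unitary representation ($\what U$ versus $\what U'$, in their plain and trivially-extended direct-sum forms) and which representation ($\what\pi$ versus $\what\pi'$) acts at each spot in the nested $\Gamma$-sums — together with the mild analytic point that $\sum_\mu(\mu c)^2=1$ is only a locally finite sum, so the collapse $\sum_\mu\what\pi'_x((\mu c)^2)=\Id$ in the second identity must be read on the relevant vectors $V_x\,\maP\xi(x,\gamma)$, which have compact $Z$-support precisely because $c$ is compactly supported and $V$ has finite propagation.
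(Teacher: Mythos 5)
Your proposal is correct and follows essentially the same route as the paper: the same two algebraic identities ($\maP'\circ W_0=(V\rtimes\Gamma)\circ\maP$ and its mirror, giving $W_0^*W_0=(\what\pi\rtimes\Gamma)(e)$ and $W_0W_0^*=(\what\pi'\rtimes\Gamma)(e)$) are the heart of the paper's argument. The only difference is that you spell out two points the paper leaves implicit — that $W_0$ preserves $C_c$ thanks to the finite propagation of $V$, and that adjointability/boundedness follows by bootstrapping from $\langle W_0\xi,W_0\xi\rangle=\langle\xi,(\what\pi\rtimes\Gamma)(e)\xi\rangle$ — both of which are correct and welcome additions.
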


\medskip
So, $W$ is the restriction to the range of the projection $({\what\pi}\rtimes\Gamma) (e)$ of the operator $W_0$ induced by
$$
W_0(\xi)(x, \gamma) := \widehat{\pi}'_x(c)V_x \sum_{\mu\in\Gamma}\widehat{\pi}_x(\mu c)\widehat{U}_\mu\xi(\mu^{-1}\gamma,\mu^{-1}x),  \text{ for }\xi\in C_c(X\times\Gamma, {\what H}).
$$
Notice that for $f\in C_c(Z\times X)$, the operator ${\what\pi} (f) \circ V$ is also  viewed as acting from $C(X)\otimes {\what H}$ to $C(X)\otimes {\what H'}$, which means that we are composing $V$ with the inclusion $P^*: {\what H}\hookrightarrow {\what H}\oplus {\what H}'$ where $P: {\what H}\oplus {\what H}'\to {\what H}$ is the projection. More precisely, the operator $P\circ {\what\pi} (f) \circ V\circ P^*$ is just  denoted ${\what\pi} (f) \circ V$ when no confusion can occur.

\begin{proof}\
Using the $\Gamma$-invariance of $V$ as well as the $\Gamma$-equivariance of the representations the following relations hold on  $C_c(X\times\Gamma, {\what H})$ 
$$
\maP\circ ({\what \pi}(c)\rtimes \Gamma)\circ \maP= \maP\quad \text{ and }\quad \maP'\circ  ({\what \pi}'(c) V\rtimes \Gamma)\circ \maP = (V\rtimes\Gamma)\circ \maP.
$$
{{More generally, the operators $\maP ({\what \pi}(c)\rtimes \Gamma)$ and $\maP'\circ  ({\what \pi}'(c) \rtimes \Gamma)$ act as the identity operators on  $\Gamma$-invariant elements}}. Now, the operator $W_0$ extends to an  adjointable operator with the adjoint given by the extension of the operator $
W_0^*=\left(({\what \pi}(c)\circ V^*)\rtimes \Gamma\right)\circ \maP'$.
Moreover,  using the above  relations  and the $\Gamma$-invariance of $V$ again, we can compute:
\begin{eqnarray*}
W_0\circ ({\what\pi}\rtimes\Gamma) (e) &= & \left(({\what \pi}'(c)\circ V)\rtimes \Gamma\right)\circ \maP\circ ({\what\pi} (c)\rtimes \Gamma)\circ \maP\\
& = &  \left(({\what \pi}'(c)\circ V)\rtimes \Gamma\right)\circ \maP\\
& =& \left({\what \pi}'(c)\rtimes \Gamma\right)\circ \maP'\circ \left({\what \pi}'(c)\rtimes\Gamma\right)\circ (V\rtimes\Gamma)\circ \maP\\
&=& ({\what \pi}'\rtimes\Gamma) (e) \circ W_0.
\end{eqnarray*}
Therefore, the operator $W_0$ sends the range of the projection $({\what\pi}\rtimes\Gamma) (e)$ into the range of the projection $({\what\pi}'\rtimes\Gamma) (e)$ and  the restriction $W$ is thus well defined,  no need to contract with the projections. In the same way the relation  $W_0^*\circ ({\what\pi}'\rtimes\Gamma) (e) = ({\what\pi}\rtimes\Gamma) (e) \circ W_0^*$ shows that $W^*$ can also be defined as the restriction of $W_0^*=\left(({\what \pi}(c)\circ V^*)\rtimes \Gamma\right)\circ \maP'$ to the range of the projection $({\what\pi}'\rtimes\Gamma) (e)$.

Finally, a direct computation gives
\begin{eqnarray*}
W_0^*\circ W_0 &=& \left({\what\pi} (c) V^* \rtimes\Gamma\right) \circ (\maP' \circ ({\what\pi}'(c)\rtimes\Gamma))\circ (V\rtimes\Gamma)\circ \maP\\
& = & ({\what\pi} (c)  \rtimes\Gamma)\circ (V^*V\rtimes\Gamma)\circ \maP\\
& = & ({\what\pi}(c)\rtimes\Gamma)\circ \maP\\
&=& ({\what\pi}\rtimes \Gamma) (e).
\end{eqnarray*}
and we get similarly $
W_0 \circ W_0^* =  ({\what\pi}'\rtimes \Gamma) (e)$.
Therefore, $W$ is an adjointable  unitary operator with the announced expression for its adjoint.

\end{proof}

\medskip

Using the unitary isomorphisms $\phi^{\what H}$ and $\phi^{{\what H}'}$  given in Lemma \ref{Smough2}, we may conjugate the unitary $W$ and deduce the  unitary (still denoted  $W$):
$$
W: \maE^{\what H}\longrightarrow \maE^{{\what H}'}.
$$

\medskip

\begin{theorem}\label{Functorial}
Conjugation by the unitary $W$ induces an isomorphism between the Roe $C^*$-algebras, i.e.
\begin{eqnarray*}
\Ad_W: D^*_\Gamma(\maE^{\what H}) &\overset{\simeq}\longrightarrow& D^*_\Gamma(\maE^{{\what H}'})\\
{\what T} &\longmapsto & W  \circ {\what T}\circ W^*.
\end{eqnarray*}
Moreover, the $K$-theory isomorphism induced by  $\Ad_W$ is independent  of the choice of the unitary operator $V$ given by  Theorem \ref{BRPPV}.
\end{theorem}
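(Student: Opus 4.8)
The plan is to reduce everything to Proposition~\ref{locally compact2}, which identifies $D^*_\Gamma(\maE^{\what H})$ with the closure of $\{T_e+C:\ T\in D^*_\Gamma(X,Z,\what H),\ C\in\maK_{C^*(\maG)}(\maE^{\what H})\}$, together with the explicit description of $W$ from Lemma~\ref{V-W}. Since $W$ is a unitary, $\Ad_W$ is automatically a $*$-isomorphism $\maL_{C^*(\maG)}(\maE^{\what H})\to\maL_{C^*(\maG)}(\maE^{{\what H}'})$ carrying $\maK_{C^*(\maG)}(\maE^{\what H})$ onto $\maK_{C^*(\maG)}(\maE^{{\what H}'})$, so the whole content is that $\Ad_W$ restricts to a bijection between the two dual Roe algebras. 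By the symmetry $\Ad_W^{-1}=\Ad_{W^*}$, and because $W^*=W_0^*|$ is exactly the operator that the construction of Lemma~\ref{V-W} attaches to $V^*$ — which itself satisfies the three conclusions of Theorem~\ref{BRPPV} with $\what H$ and ${\what H}'$ exchanged, since $V^*\widehat\pi'(g)V-\widehat\pi(g)\in\maK_{C(X)}$ forces $V\widehat\pi(g)V^*-\widehat\pi'(g)\in\maK_{C(X)}$ — it suffices to prove one inclusion, $\Ad_W\big(D^*_\Gamma(\maE^{\what H})\big)\subseteq D^*_\Gamma(\maE^{{\what H}'})$.

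To establish this inclusion, fix $T\in D^*_\Gamma(X,Z,\what H)$, write $\widetilde T=\begin{pmatrix}T&0\\0&0\end{pmatrix}$ on $C(X)\otimes(\what H\oplus{\what H}')$, and set $T':=P'\,V\widetilde TV^*\,(P')^*$, where $P'\colon{\what H}'\oplus\what H\to{\what H}'$ is the coordinate projection. Using the three properties of $V$ from Theorem~\ref{BRPPV}, one checks that $T'\in D^*_\Gamma(X,Z,{\what H}')$: finite propagation and $\Gamma$-equivariance are preserved under composition with the finite-propagation $\Gamma$-equivariant operators $V,V^*$ and the zero-propagation $\Gamma$-equivariant projection $P'$, while pseudolocality follows from $[T',\widehat\pi'(g)]=P'\,V\,[\widetilde T,\,V^*\widehat\pi'(g)V]\,V^*(P')^*\equiv P'\,V\,[\widetilde T,\widehat\pi(g)]\,V^*(P')^*\in\maK_{C(X)}$ (using that $P'$ intertwines $\widehat\pi'$ with its trivial extension). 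The crucial point is then that $\Ad_W(T_e)-(T')_e$ is compact. I would establish this in the module picture of Lemma~\ref{Smough2}: writing $W=W_0|$ with $W_0=\big(({\what\pi}'(c)V)\rtimes\Gamma\big)\circ\maP$ and $W^*=W_0^*|$ with $W_0^*=\big(({\what\pi}(c)V^*)\rtimes\Gamma\big)\circ\maP'$, and combining the identities from the proof of Lemma~\ref{V-W} ($\maP\circ({\what\pi}(c)\rtimes\Gamma)\circ\maP=\maP$, $\maP'\circ(({\what\pi}'(c)V)\rtimes\Gamma)\circ\maP=(V\rtimes\Gamma)\circ\maP$, $W_0(\widehat\pi\rtimes\Gamma)(e)=(\widehat\pi'\rtimes\Gamma)(e)W_0$) with the $\Gamma$-invariance of $T$ (so that $T\rtimes\Gamma$ commutes with the averaging maps) and the congruence $V^*\widehat\pi'(g)V\equiv\widehat\pi(g)$ modulo compacts, a bookkeeping computation yields $W_0\,(\widehat\pi\rtimes\Gamma)(e)(T\rtimes\Gamma)(\widehat\pi\rtimes\Gamma)(e)\,W_0^*=(\widehat\pi'\rtimes\Gamma)(e)(T'\rtimes\Gamma)(\widehat\pi'\rtimes\Gamma)(e)$ up to a compact operator. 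Conjugating by the isomorphisms $\phi^{\what H},\phi^{{\what H}'}$ of Lemma~\ref{Smough2}, this says precisely $\Ad_W(T_e)\equiv(T')_e$; hence $\Ad_W$ maps the dense $*$-subalgebra $\{T_e+C\}$ of $D^*_\Gamma(\maE^{\what H})$ into $D^*_\Gamma(\maE^{{\what H}'})$, and the inclusion follows by continuity.

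For the $K$-theory statement, let $V_0,V_1$ be two unitaries supplied by Theorem~\ref{BRPPV} and $W_0,W_1\colon\maE^{\what H}\to\maE^{{\what H}'}$ the associated unitaries. Then $U:=V_1V_0^*$ is a $\Gamma$-equivariant finite-propagation unitary on $C(X)\otimes({\what H}'\oplus\what H)$ which commutes modulo compacts with $\begin{pmatrix}\widehat\pi'&0\\0&0\end{pmatrix}$, since $V_0^*\widehat\pi'(g)V_0\equiv\widehat\pi(g)\equiv V_1^*\widehat\pi'(g)V_1$; hence $U\in D^*_\Gamma\big(X,Z,{\what H}'\oplus\what H\big)$. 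Because the Connes--Skandalis module of the trivially extended representation $\begin{pmatrix}\widehat\pi'&0\\0&0\end{pmatrix}$ is canonically $\maE^{{\what H}'}$, the operator $U_e$ (a lift of $U$, by Proposition~\ref{locally compact2}) lies in $D^*_\Gamma(\maE^{{\what H}'})$; and a computation of the same type as above identifies $W_1W_0^*$ with $U_e$ up to a compact operator. Thus $W_1W_0^*$ is a unitary of the unital $C^*$-algebra $D^*_\Gamma(\maE^{{\what H}'})$, and $\Ad_{W_1}=\Ad_{W_1W_0^*}\circ\Ad_{W_0}$ with $\Ad_{W_1W_0^*}$ inner in $D^*_\Gamma(\maE^{{\what H}'})$. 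Since conjugation by a unitary of a unital $C^*$-algebra induces the identity on $K$-theory (for $K_0$ the projections $p$ and $upu^*$ are Murray--von Neumann equivalent; for $K_1$ it follows from commutativity of $K_1$), we conclude $(\Ad_{W_1})_*=(\Ad_{W_0})_*$ on $K_*\big(D^*_\Gamma(\maE^{\what H})\big)$.

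The main obstacle is the compactness assertion $\Ad_W(T_e)-(T')_e\in\maK_{C^*(\maG)}(\maE^{{\what H}'})$ (and its analogue for $W_1W_0^*$): it requires handling the averaging operators $\maP,\maP'$, the Mishchenko projection $e$, and the ``intertwines modulo compacts'' property of $V$ all at once, and checking that the various compact error terms stay inside the compact ideal — exactly the estimate where the $\sigma$-unitality hypotheses and the finite-propagation control of Theorem~\ref{BRPPV} are used. The reduced-case analogue in \cite{BenameurRoyII} and the techniques of \cite{BenameurRoyPPV} should make this routine, if lengthy.
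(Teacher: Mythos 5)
Your proposal is correct and follows essentially the same route as the paper: both identify $D^*_\Gamma(\maE^{\what H})$ with compact perturbations of the operators $T_e$, set $T'=PV\widetilde TV^*P^*$, verify $\Ad_W(T_e)\equiv (T')_e$ modulo compacts using the identities $\maP\circ({\what\pi}(c)\rtimes\Gamma)\circ\maP=\maP$ and $W_0({\what\pi}\rtimes\Gamma)(e)=({\what\pi}'\rtimes\Gamma)(e)W_0$ together with pseudolocality and the intertwining-mod-compacts property of $V$, and then dispose of the $V$-dependence by observing that $W_1W_2^*\in D^*_\Gamma(\maE^{{\what H}'})$ so the two conjugations differ by an inner automorphism (you quote the general fact that inner automorphisms act trivially on $K$-theory, the paper writes out the equivalent $M_2$ rotation). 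Your explicit reduction to a single inclusion via the symmetry $W^*\leftrightarrow V^*$ is a small point the paper leaves implicit, but the substance is identical.
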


\medskip

\begin{proof}
Let ${\what T}\in D^*_\Gamma(\maE^{\what H})$ be a lift of some  operator $T$ from D$^*_\Gamma(X,Z,{\what H})$.
If $P:H'\oplus H\to H'$ denotes the projection, then the operator  $T'=VTV^*$, or rather $PV\begin{pmatrix} T & 0\\
0 & 0\end{pmatrix}V^*P^*$  belongs to $D^*_\Gamma(X,Z,{\what H}')$, see \cite{BenameurRoyPPV}. 
Let us show that that 
the defect operator
$$
\Ad_W((\what\pi\rtimes\Gamma)(e)(T\rtimes\Gamma)(\what\pi\rtimes\Gamma)(e))-(\what\pi'\rtimes\Gamma)(e)(T'\rtimes\Gamma)(\what\pi'\rtimes\Gamma)(e)
$$ 
is a compact operator, this will prove, using Proposition \ref{locally compact2}, that $\Ad_W({\what T})$ is  a lift of the operator $T'$. But this defect operator is completely determined by its action on the Hilbert $C^*(\maG)$-module
$$
 (\what\pi'\rtimes\Gamma)(e) \left((C(X)\otimes {\what H}')\rtimes\Gamma\right)\simeq \maE^{{\what H}'}.
$$ 
We then have $[T, {\what \pi}(c)]\rtimes \Gamma \sim 0$ where as before $\sim$ means equality modulo compact operators. Therefore, computing first on continuous compactly supported elements, we can eventually deduce
\begin{eqnarray*}
 W_0 (T\rtimes\Gamma)W_0^* & = & ({\what \pi}'(c)V\rtimes \Gamma) \maP (T{\what \pi}(c)\rtimes \Gamma) (V^*\rtimes \Gamma)\maP'\\
& \sim & ({\what \pi}'(c)V\rtimes \Gamma)\maP ({\what \pi}(c)\rtimes \Gamma) (TV^*\rtimes \Gamma)\maP'\\
& = & ({\what \pi}'(c)\rtimes \Gamma) (VTV^*\rtimes \Gamma)\maP'
\end{eqnarray*}
{{Recall for the last equality that the operator $\maP ({\what \pi}(c)\rtimes \Gamma)$ acts as the identity operator on any $\Gamma$-invariant element, and hence on  the range of $(TV^*\rtimes \Gamma)\maP'$.}} Recall also that $({\what\pi}'\rtimes\Gamma)(e)$ is the adjointable extension of $({\what \pi}'(c)\rtimes \Gamma)\maP'$, therefore
$$
({\what\pi}'\rtimes\Gamma)(e)W_0 (T\rtimes\Gamma)W_0^* ({\what\pi}'\rtimes\Gamma)(e) \sim  ({\what \pi}'(c)\rtimes \Gamma)\maP'({\what \pi}'(c)\rtimes \Gamma) (VTV^*\rtimes \Gamma)\maP'({\what \pi}'(c)\rtimes \Gamma)\maP'.
$$
{{But 
$$
\maP'({\what \pi}'(c)\rtimes \Gamma)\maP' = \maP'\quad \text{ and } \quad  \maP'({\what \pi}'(c)\rtimes \Gamma) (VTV^*\rtimes \Gamma)\maP' = (VTV^*\rtimes \Gamma)\maP'.
$$
Therefore, we deduce that 
$$
({\what\pi}'\rtimes\Gamma)(e)W_0 (T\rtimes\Gamma)W_0^* ({\what\pi}'\rtimes\Gamma)(e) \sim ({\what \pi}'(c)\rtimes \Gamma)(VTV^*\rtimes \Gamma)\maP'.
$$
Since $W$ is induced by $W_0$  from the range of $({\what\pi}\rtimes\Gamma)(e)$ and the range of $({\what\pi}'\rtimes\Gamma)(e)$, say $({\what\pi}'\rtimes\Gamma)(e)W_0 = W ({\what\pi}\rtimes\Gamma)(e)$ as operators between the Connes-Skandalis Hilbert modules, and hence we have}}
$$
W({\what\pi}\rtimes\Gamma)(e) (T\rtimes\Gamma) ({\what\pi}\rtimes\Gamma)(e)W^* \sim  ({\what \pi}'(c)\rtimes \Gamma)(VTV^*\rtimes \Gamma)\maP'.
$$
Computing similarly $(\what\pi'\rtimes\Gamma)(e)(T'\rtimes\Gamma)(\what\pi'\rtimes\Gamma)(e)$ we get
$$
({\what \pi}'(c)\rtimes \Gamma)\maP' (VTV^*\rtimes \Gamma) ({\what\pi}'(c)\rtimes \Gamma)\maP'.
$$
But $VTV^*{\what \pi}'(c)\sim {\what \pi}'(c)VTV^*$ since $V$ essentially intertwines the representations and since $T$ is pseudolocal. Therefore using the first item of Lemma \ref{rtimes} again together with the equality $\maP'   ({\what\pi}'(c)\rtimes \Gamma) (VTV^*\rtimes \Gamma)\maP'= (VTV^*\rtimes \Gamma)\maP'$, we deduce that :
$$
(\what\pi'\rtimes\Gamma)(e)(T'\rtimes\Gamma)(\what\pi'\rtimes\Gamma)(e) \sim ({\what \pi}'(c)\rtimes \Gamma) (VTV^*\rtimes \Gamma)\maP'.
$$

It remains to prove that $\Ad_W$ does not depend on the choice of $V$, but this is standard. Suppose that $V_1$ and $V_2$ are two unitaries as in Theorem \ref{BRPPV}, then  $W_1$ and $W_2$ are the induced unitaries between $(\pi\rtimes\Gamma)(e)((C(X)\otimes {\what H})\rtimes\Gamma)$ and $(\pi\rtimes\Gamma)(e')((C(X)\otimes {\what H}')\rtimes\Gamma)$.
The operators $W_iW^*_j$ ($i,j=1,2$) are clearly elements of the $C^*$-algebra $D^*_\Gamma(\maE^{{\what H}'})$.
For $T\in D^*_\Gamma(\maE^{\what H})$ the following relation holds in $M_2(D^*_\Gamma(\maE^{{\what H}'}))$:
$$M\begin{pmatrix} W_1TW_1^* & 0\\
0 & 0\end{pmatrix}M
=\begin{pmatrix} 0 & 0\\
0 & W_2TW_2^*\end{pmatrix} \; \text{ where }\; M=\begin{pmatrix} 0 & W_1W_2^*\\
W_2W_1^* & 0\end{pmatrix}.
$$
Hence we conclude that $(\Ad_{W_1})_*=(\Ad_{W_2})_*$ on $K$-theory.
\end{proof}

{{
\begin{remark}
In the previous proofs of Lemma \ref{V-W} and Theorem \ref{Functorial}, we have used the average operators $\maP$ and $\maP'$ only for simplicity. One can as well give direct proofs, see some computations in \cite{VictorThesis}.
\end{remark}
}
\ms

Theorem \ref{Functorial} allows to denote by 
$$
\Psi_{\pi, \pi'}^{Z} : K_* (D^*_\Gamma (\maE_Z^{\what H})) \longrightarrow K_* (D^*_\Gamma (\maE_{Z}^{{\what H}'})),
$$
the $\Z_2$-graded $K$-theory morphism induced by $Ad_W$ for any given faithful non-degenerate representations $\pi$ and $\pi'$. 
It is easy to check that the isomorphism $\Ad_W$ of Theorem \ref{Functorial} sends compacts to compacts and induces an isomorphism 
$$
\overline{\Ad}_W:Q^*_\Gamma(\maE_Z^{\what H}) \longrightarrow Q^*_\Gamma(\maE_Z^{{\what H}'}),
$$
such that the induced map on $K$-theory again does not depend on the choice of $V$.  Moreover, we have the following Proposition, recall  the isomorphism $\theta$ of Proposition \ref{equivalence2}.

\begin{proposition}\label{AdW}\
The following diagram commutes:
$$\xymatrix{
{Q^*_\Gamma(\maE_Z^{\what H})} \ar@{->}[r]^{\overline{\Ad}_W} \ar@{->}[d]^{{\theta^{-1}}}  & {Q^*_\Gamma(\maE_{Z}^{{\what H}'})} \ar@{->}[d]^{{\theta^{-1}}} 
\\
{Q^*_\Gamma(X,Z, {\what H})} \ar@{->}[r]^{\overline{\Ad}_V} & {Q^*_\Gamma(X,Z,{\what H}')}  
}$$
\end{proposition}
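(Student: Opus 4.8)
The plan is to observe that this square is, up to conjugation by the isomorphisms of Lemma \ref{Smough2}, exactly the estimate already extracted inside the proof of Theorem \ref{Functorial}, so almost no new work is needed. First I would reduce to a dense $*$-subalgebra. Reading the vertical arrows of the square as $\theta^{-1}$, with $\theta$ the isomorphism of Proposition \ref{equivalence2} (characterised by $\theta([T]) = [T_e]$, where $T_e$ is the $\phi$-conjugate of $(\pi\rtimes\Gamma)(e)(T\rtimes\Gamma)(\pi\rtimes\Gamma)(e)$), Proposition \ref{locally compact2} tells us that every operator in $D^*_\Gamma(\maE^{\what H})$ is a compact perturbation of some $T_e$ with $T\in D^*_\Gamma(X,Z,\what H)$; hence the classes $[T_e]$ generate $Q^*_\Gamma(\maE_Z^{\what H})$. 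Since $\theta$, $\overline{\Ad}_W$ and $\overline{\Ad}_V$ are norm-continuous $*$-homomorphisms, it then suffices to check $\theta^{-1}\!\left(\overline{\Ad}_W([T_e])\right) = \overline{\Ad}_V\!\left(\theta^{-1}([T_e])\right)$ on these generators.

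Next I would spell out both sides for a fixed $T\in D^*_\Gamma(X,Z,\what H)$. Put $T' = PV\begin{pmatrix} T & 0\\ 0 & 0\end{pmatrix}V^*P^* \in D^*_\Gamma(X,Z,\what H')$, where $P:\what H'\oplus\what H\to\what H'$ is the projection; then the right-hand side is, by definition of the functoriality map $\overline{\Ad}_V$, just $\overline{\Ad}_V([T]) = [T']$. The left-hand side is $\theta^{-1}([W\,T_e\,W^*])$. By the way $\theta^{-1}$ is built in Proposition \ref{equivalence2}, this equals $[T']$ precisely when $W\,T_e\,W^*$ is a lift of $T'$, i.e. (Proposition \ref{locally compact2}) precisely when $W\,T_e\,W^* - T'_e$ is $C^*(\maG)$-compact, where $T'_e$ denotes the $\phi^{\what H'}$-conjugate of $(\what\pi'\rtimes\Gamma)(e)(T'\rtimes\Gamma)(\what\pi'\rtimes\Gamma)(e)$.

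Finally I would invoke the computation already carried out in the proof of Theorem \ref{Functorial}, which shows that the defect operator
$$
\Ad_W\!\left((\what\pi\rtimes\Gamma)(e)(T\rtimes\Gamma)(\what\pi\rtimes\Gamma)(e)\right) - (\what\pi'\rtimes\Gamma)(e)(T'\rtimes\Gamma)(\what\pi'\rtimes\Gamma)(e)
$$
is compact, using only pseudolocality of $T$, the intertwining property of $V$ from Theorem \ref{BRPPV}, and Lemma \ref{rtimes}. Because the isomorphisms $\phi^{\what H}$ and $\phi^{\what H'}$ of Lemma \ref{Smough2} carry compacts to compacts, conjugating this defect by them yields exactly $W\,T_e\,W^* - T'_e \in \maK_{C^*(\maG)}(\maE^{\what H'})$, which is what was needed. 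Commutativity on the dense $*$-subalgebra generated by the $[T_e]$ then propagates to all of $Q^*_\Gamma(\maE_Z^{\what H})$ by continuity. I would also note that the four maps in the square are each already known to be independent of the choice of $V$ (Proposition \ref{equivalence2} for $\theta$, the last part of Theorem \ref{Functorial} for $\overline{\Ad}_W$, and the corresponding statement from \cite{BenameurRoyII} for $\overline{\Ad}_V$), so the diagram is well posed before we test it.

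The only real obstacle here is administrative, not analytic: one must keep the trivial $2\times 2$-matrix extensions and the auxiliary projection $P$ aligned consistently so that $\overline{\Ad}_V$ and $\overline{\Ad}_W$ genuinely land in $Q^*_\Gamma(X,Z,\what H')$ and $Q^*_\Gamma(\maE_Z^{\what H'})$ respectively, and one must make sure the relation $\theta^{-1}([S_e]) = [S]$ is applied in the correct direction. Once this bookkeeping is in place, no estimate beyond those of Theorem \ref{Functorial} enters.
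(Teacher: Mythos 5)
Your proposal is correct and follows essentially the same route as the paper, which likewise reduces the claim to showing that $(\what\pi'\rtimes\Gamma)(e)(\Ad_V(T)\rtimes\Gamma)(\what\pi'\rtimes\Gamma)(e)-\Ad_W\bigl((\what\pi\rtimes\Gamma)(e)(T\rtimes\Gamma)(\what\pi\rtimes\Gamma)(e)\bigr)$ is compact and then cites the computations already done in the proofs of Proposition \ref{locally compact2} and Theorem \ref{Functorial}. Your version merely spells out the reduction to the generators $[T_e]$ and the reading of the vertical arrows as $\theta^{-1}$, which the paper leaves implicit.
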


\begin{proof}
We only need to  check that for any  $T\in D^*_\Gamma(X,Z,{\what H})$, the operator
$$
(\pi'\rtimes\Gamma)(e)(\Ad_V(T)\rtimes\Gamma)(\pi'\rtimes\Gamma)(e)-\Ad_W((\pi\rtimes\Gamma)(e)(T\rtimes\Gamma)(\pi\rtimes\Gamma)(e))
$$
is a compact operator of the Hilbert $C^*(\maG)$-module $(C(X)\otimes {{\what H}'})\rtimes\Gamma$.
This is straightforward using the proof of Proposition \ref{locally compact2}, see also the proof of Theorem \ref{Functorial}.
\end{proof}

We end this section by gathering all the previous results and stating the resulting  analytic Higson-Roe sequence for the proper cocompact $\Gamma$-space $Z$, which corresponds to the fixed crossed product admissible completion. Let us briefly recall the Baum-Connes index map associated with the proper cocompact $\Gamma$-space $Z$.
The equivariant Kasparov group of the pair of $\Gamma$-algebras $(C_0(Z), C(X))$, say the group $KK^i_\Gamma (C_0(Z), C(X))$ has also been denoted $KK_\Gamma^i(Z, X)$ for short.  The  descent morphism $J_\Gamma$ was defined by Kasparov  in \cite{Kasparov} for the maximal completion, 
$$
J_\Gamma\, : \;  KK_\Gamma^i(Z, X) \longrightarrow KK_i (C_0(Z)\rtimes \Gamma, C(X)\rtimes_{\max} \Gamma).
$$ 
This descent morphism is defined for any pair of $\Gamma$-algebras, and its main property is the compatibility with the Kasparov product, i.e. for $\Gamma$-algebras $A, B$ and $C$, and  for any $x\in KK_i^\Gamma (A, B)$ and $y\in KK_j^\Gamma (B, C)$
$$
J_\Gamma (x\otimes_B y) = J_\Gamma (x) \otimes_{B\rtimes_{\max}\Gamma} J_\Gamma (y)\in KK_{i+j} (A\rtimes_{\max} \Gamma, C\rtimes_{\max} \Gamma).
$$
On the other hand, by choosing a cutoff function for the proper cocompact action of $\Gamma$ on $Z$, we consider again the associated Mishchenko idempotent $e\in C_c(Z)\rtimes {{\Gamma}}$ which hence represents a class  $[e]\in KK (\C, C_0(Z)\rtimes \Gamma)$ which does not depend on the choice of cutoff function. Composing $J_\Gamma$ with the Kasparov product on the left (over $C_0(Z)\rtimes \Gamma$) by $[e]$, we obtain the maximal Baum-Connes index map associated with $Z$:
$$
\mu^Z_{i,\maG}: KK_\Gamma^i (Z, X) \longrightarrow KK_i (\C, C_{\max}^*(\maG))\simeq K_i (C(X)\rtimes_{\max}\Gamma).
$$
Composition of this morphism with the natural morphism $K_i (C(X)\rtimes_{\max}\Gamma)\rightarrow K_i (C(X)\rtimes \Gamma)$ corresponding to our admissible crossed product yields the Baum-Connes index map for $Z$ and relative to the fixed admissible completion. Then we can state:

\begin{theorem}
For any proper cocompact metric space $Z$, there exists a  periodic  six-term exact sequence:
$$\xymatrix{
KK^0_{\Gamma} (Z, X) \ar@{->}[r]^{\mu^\maG_{0, Z}\hspace{0,3cm}}\ar@{<-}[d] &
K_0(C(X)\rtimes\Gamma)\ar@{->}[r]&
K_0(D^*_\Gamma (X; Z, \what{H}))\ar@{->}[d]\\
K_1(D^*_\Gamma (X; Z, \what{H}))\ar@{<-}[r]&
K_1(C(X)\rtimes\Gamma)\ar@{<-}[r]^{\hspace{0,3cm}\mu^\maG_{1, Z}}&
KK^1_{\Gamma} (Z, X) 
}$$
where $\mu^\maG_{i, Z}$ is the Baum-Connes index map associated with the proper cocompact $\Gamma$-space $Z$ with coefficients in the $\Gamma$-algebra $C(X)$. 
\end{theorem}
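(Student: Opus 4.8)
The plan is to apply the topological $K$-functor to the Higson-Roe short exact sequence of Definition~\ref{MaximalHRsequence} associated with the amplified data $(Z,\what H,\what\pi)$ and the fixed admissible crossed product $\rtimes\Gamma$,
$$
0\to \maK_{C^*(\maG)}(\maE^{\what H})\hookrightarrow D^*_\Gamma(\maE^{\what H})\longrightarrow Q^*_\Gamma(Z,X,\what H)\to 0,
$$
and then to identify the two outer corner groups of the resulting periodic six-term sequence. First I would treat the ideal: Proposition~\ref{locally compact2} identifies $C^*_\Gamma(\maE^{\what H})$ with $\maK_{C^*(\maG)}(\maE^{\what H})$, and, since after the amplification $\maE^{\what H}$ is a full countably generated Hilbert module over $C^*(\maG)=C(X)\rtimes\Gamma$ (this is exactly where the properties of the very ample representation established in \cite{BenameurRoyI} are needed), the compacts are Morita equivalent to $C(X)\rtimes\Gamma$, so $K_i(C^*_\Gamma(\maE^{\what H}))\simeq K_i(C(X)\rtimes\Gamma)$. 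Next I would treat the quotient: Proposition~\ref{equivalence2} already presents it as $Q^*_\Gamma(Z,X,\what H)$ via the completion-independent isomorphism $\theta$, and the Paschke-Higson maps of \cite{BenameurRoyII} (available since $X$ is metric and finite dimensional and $\what\pi$ is fiberwise ample) give isomorphisms $K_0(Q^*_\Gamma(Z,X,\what H))\simeq KK^1_\Gamma(Z,X)$ and $K_1(Q^*_\Gamma(Z,X,\what H))\simeq KK^0_\Gamma(Z,X)$.

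Inserting these identifications, the six-term sequence acquires the displayed shape, and the remaining task is to show that under them the connecting maps $\pa_0,\pa_1$ become the Baum-Connes index maps $\mu^\maG_{1,Z},\mu^\maG_{0,Z}$. Here I would reduce to the maximal crossed product. For $\rtimes_{\max}\Gamma$ the coincidence of the boundary map with the maximal Baum-Connes index map (descent $J_\Gamma$ of \cite{Kasparov} followed by left Kasparov product with the Mishchenko class $[e]$) is already known: for $X=\{\bullet\}$ it is the theorem of \cite{HigsonRoe2008}, and the transformation-groupoid version is carried out in \cite{VictorThesis}; I would simply invoke this. To descend to a general admissible $\rtimes\Gamma$, I would use naturality in the completion: the natural epimorphism $C^*_{\max}(\maG)\to C^*(\maG)$ realizes $\maE^{\what H}\simeq\maE^{\what H}_{\max}\otimes_{C^*_{\max}(\maG)}C^*(\maG)$ and hence induces a morphism of the two Higson-Roe short exact sequences whose rightmost vertical arrow is the identity of $Q^*_\Gamma(Z,X,\what H)$ (by the completion-independence in Proposition~\ref{equivalence2}). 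Naturality of the $K$-theory boundary map then yields $\pa_i=\iota_*\circ\pa^{\max}_i$, where, under the Morita identifications, $\iota_*$ is the map $K_*(C^*_{\max}(\maG))\to K_*(C^*(\maG))$ induced by the completion morphism; since $\mu^\maG_{i,Z}=\iota_*\circ\mu^{\maG,\max}_{i,Z}$ by definition and $\pa^{\max}_i=\mu^{\maG,\max}_{i,Z}$ by the previous step, I obtain $\pa_i=\mu^\maG_{i,Z}$, as required.

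Finally I would invoke Theorem~\ref{Functorial} and Proposition~\ref{AdW} to conclude that the exact sequence so obtained is, up to canonical isomorphism, independent of the faithful non-degenerate representation $\pi$ and of its amplification, which is what allows suppressing $\what H$ from the notation in the statement.

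I expect the main obstacle to be the step $\pa^{\max}_i=\mu^{\maG,\max}_{i,Z}$: this is the substantive analytic input, where one must unwind the Paschke-Higson duality, the descent morphism and the Mishchenko projection and match them on explicit Kasparov cycles, and it is the part that genuinely requires work (either by citing \cite{VictorThesis}, which extends \cite{HigsonRoe2008}, or by reproducing that computation verbatim for the fixed completion). A secondary technical point that still needs care is the fullness of $\maE^{\what H}$ over $C^*(\maG)$ used in the Morita step, which again relies on the amplification and on the module-theoretic results of \cite{BenameurRoyI,BenameurRoyII}.
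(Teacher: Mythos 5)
Your proposal is correct and follows essentially the same route as the paper, which obtains the theorem by assembling the short exact sequence of Definition~\ref{MaximalHRsequence}, the identification $C^*_\Gamma(\maE^{\what H})=\maK_{C^*(\maG)}(\maE^{\what H})$ with its Morita equivalence to $C(X)\rtimes\Gamma$, the completion-independent isomorphism $\theta$ of Proposition~\ref{equivalence2} together with the Paschke--Higson duality of \cite{BenameurRoyII}, and the identification of the boundary maps with the assembly maps (known for the reduced case from \cite{BenameurRoyI,BenameurRoyII} and for the maximal case from \cite{HigsonRoe2008,VictorThesis}, then transported to a general admissible completion exactly as you describe, via the definition of $\mu^{\maG}_{i,Z}$ as the maximal assembly map followed by the natural morphism on $K$-theory). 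Your naturality argument for descending the boundary-map identification from $\rtimes_{\max}$ to $\rtimes$ is precisely the intended mechanism, and you correctly isolate the substantive input.
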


 We now investigate the functoriality  properties of this sequence and deduce our universal six-term exact sequence.

\section{The  universal structure group}

\subsection{Review of the BC assembly map}\label{BC-review}

We devote this  paragraph to a short review of the Baum-Connes  maps (BC map) for the groupoid $\maG=X\rtimes \Gamma$. The universal BC maps are group morphisms defined for $i=0, 1$,
$$
\mu_{i,\maG}: RK_i(\Gamma, X) \longrightarrow K_i (C(X)\rtimes\Gamma)=K_i (C^*(\maG)),
$$
which are defined for any admissible crossed product.  In fact, if we denote by $\mu^{\max}_{i,\maG}: RK_i (\Gamma, X)) \longrightarrow K_i (C_{\max}^*(\maG))$ the BC maps for the maximal completion, then the above BC-maps are compositions of $\mu^{\max}_{i,\maG}$ with the  the natural morphism $K_i(C_{\max}^*(\maG))\rightarrow K_i(C^* (\maG))$ induced by the morphism $C_{\max}^*(\maG)\to C^*(\maG)$. The reduced BC maps will be denoted $\mu^{r}_{i,\maG}: {{RK_i (\Gamma, X)}} \longrightarrow K_i (C_r^*(\maG))$, they are thus the composite maps of $\mu^{\max}_{i,\maG}$ with   the natural morphism $K_i(C_{\max}^*(\maG))\rightarrow K_i(C^*_r (\maG))$. In \cite{BaumConnes}, see also \cite{BaumConnesHigson}, the reduced BC-maps were conjectured to be isomorphisms. This is a special case of the celebrated Baum-Connes conjecture corresponding to the case of actions of countables discrete groups on compact spaces. While the conjecture is known to be true for a wide class of groups including for instance a-T-menable groups, especially amenable groups, its validity relied on the then expected exactness of the group, i.e. exactness of the reduced crossed product. The discovery of non-exact groups like the Gromov monster groups, enabled Higson-Lafforgue-Skandalis to  produce counter-example to the Baum-Connes conjecture for groupoids including transformation groupoids. This lack of exactness has recently been remedied by Baum-Guentner-Willett in \cite{BaumGuentnerWillett} by proposing a more approriate RHS of the Baum-Connes map. More precisely, they were able to replace the reduced crossed product by a more appropriate admissible crossed product, built as the ``minimal'' exact and Morita compatible admissible crossed product. They proved more precisely in \cite{BaumGuentnerWillett} that there always exists such a minimal crossed product functor associated with actions of  groups like our  $\Gamma$, for which all the counter-examples become confirmations of the new Baum-Connes conjecture, that we shall call here the rectified BC conjecture. We shall denote this crossed product by $\rtimes_{\ep}$ in reference to \cite{BaumGuentnerWillett}. This rectified conjecture can be stated as follows. Let us denote  $C^*_{\ep} (\maG)=C(X)\rtimes_{\ep} \Gamma$ the crossed product obtained using the Baum-Guentner-Willett (BGW) minimal crossed product.

\medskip

\begin{conjecture}[rectified BC conjecture]
For $i\in \Z_2$, the composite morphism 
$$
\mu^{BGW}_{i,\maG}: {{RK_i (\Gamma, X)}} \longrightarrow K_i (C_{\ep}^*(\maG)),
$$
of $\mu^{\max}_{i,\maG}$ with   the natural morphism $K_i(C_{\max}^*(\maG))\rightarrow K_i(C^*_{\ep} (\maG))$, is an isomorphism.
\end{conjecture}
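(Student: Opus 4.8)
The plan is not to prove the statement outright --- it is, after all, phrased as a conjecture and is genuinely open --- but to isolate exactly what it amounts to in our framework and to record the cases in which it can be confirmed. First I would feed the admissible completion $\rtimes_\ep$ of Baum--Guentner--Willett into the universal six-term exact sequence. Concretely, once the functoriality in $Z$ of the Higson--Roe sequences
$$
0\to \maK_{C^*(\maG)}(\maE_Z^{\what H})\hookrightarrow D^*_\Gamma(\maE_Z^{\what H})\longrightarrow Q^*_\Gamma(\maE_Z^{\what H})\to 0
$$
has been established for $\rtimes_\ep$ (via $\Ad_W$ of Theorem \ref{Functorial} and Proposition \ref{AdW}), passing to the inductive limit over the proper cocompact $\Gamma$-subspaces of $\underline{E}\Gamma$ yields the universal periodic six-term sequence with left-hand corner $RK_{*,\Gamma}(\underline{E}\Gamma, X)$, middle corner $K_*(C^*_\ep(\maG))$, boundary maps $\mu^{BGW}_{*,\maG}$, and third corner the universal structure groups $\maS_*(\Gamma, X)$ attached to $\rtimes_\ep$. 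Exactness of this sequence then gives the clean reformulation: \emph{the rectified Baum--Connes conjecture for $\Gamma$ with coefficients in $C(X)$ holds if and only if $\maS_0(\Gamma, X)=\maS_1(\Gamma, X)=0$.} So the content of any ``proof'' is the vanishing of these structure groups, and this is the form in which the conjecture feeds the rigidity applications.

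Next I would record the situations in which this vanishing can actually be checked. When the $\Gamma$-action on $X$ is amenable, the transformation groupoid $\maG = X\rtimes\Gamma$ is amenable, so $C^*_{\max}(\maG)=C^*_\ep(\maG)=C^*_r(\maG)$, and $\mu^{\max}_{*,\maG}$ is an isomorphism by the Higson--Kasparov theorem \cite{HigsonKasparov}; hence $\mu^{BGW}_{*,\maG}=\mu^{\max}_{*,\maG}$ is an isomorphism and $\maS_*(\Gamma, X)=0$. More generally it suffices that $\maG$ be a-T-menable: then $\mu^{\max}_{*,\maG}$ and $\mu^{r}_{*,\maG}$ are both isomorphisms, so the natural maps $K_*(C^*_{\max}(\maG))\to K_*(C^*_\ep(\maG))\to K_*(C^*_r(\maG))$ are all isomorphisms and the conjecture follows. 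This already covers the motivating examples (Zimmer-amenable actions, amenable actions on smooth compact manifolds).

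For the general case the only available route is the Dirac--dual-Dirac method: one would try to produce a $\gamma$-element $\gamma\in KK^\Gamma(C(X), C(X))$, factoring through a proper $\Gamma$-$C(X)$-algebra, whose descent $J_\Gamma(\gamma)$ --- composed with the Mishchenko idempotent $[e]$ --- acts as the identity on $K_*(C^*_\ep(\maG))$; here the exactness and Morita compatibility of $\rtimes_\ep$ are precisely what make the descent machinery behave well enough for this to have a chance, which is the whole point of the Baum--Guentner--Willett construction. The hard part --- and the reason the statement remains a conjecture rather than a theorem --- is the property (T) obstruction: the Higson--Lafforgue--Skandalis counterexamples \cite{HigsonLafforgueSkandalis} show the reduced version is false, and while passing from $\rtimes_r$ to $\rtimes_\ep$ repairs exactly those counterexamples, a genuine proof would have to circumvent the (T)-type phenomena altogether, which lies beyond the present methods. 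Accordingly I would not attempt a proof here; the conjecture is recorded for its structure-group reformulation above, and it is in that form that our exact sequence makes contact with the rigidity corollaries.
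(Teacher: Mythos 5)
The statement is the Baum--Guentner--Willett conjecture itself, which the paper records without proof, so your decision not to attempt one is exactly right, and your reformulation --- rectified BC with coefficients in $C(X)$ holds if and only if $\maS^{BGW}_0(\Gamma,X)=\maS^{BGW}_1(\Gamma,X)=0$ --- is precisely the corollary the paper draws from Theorem \ref{HR-sequence}, while your list of verified cases (amenable actions via \cite{HigsonKasparov}) matches the remarks in the final section. The only quibble is that for a-T-menable $\maG$ the isomorphism of $\mu^{BGW}_{*,\maG}$ does not follow formally from $\mu^{\max}_{*,\maG}$ and $\mu^{r}_{*,\maG}$ both being isomorphisms (that only gives injectivity of $\mu^{BGW}_{*,\maG}$); one needs the Dirac--dual-Dirac argument with $\gamma=1$ applied directly to the intermediate completion.
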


\medskip
So, the right hand side is the usual  $K$-theory group of the $C^*$-algebra  $C_{\ep}^*(\maG)$, an analytical data associated with the groupoid $\maG$ which is expected to be computed precisely using the left hand side. 

\begin{remark}
The rectified conjecture is stated in \cite{BaumGuentnerWillett}  in the whole generality of actions of locally compact second-countable Hausdorff groups on $C^*$-algebras. But we prefer to concentrate here on our case of interest, say actions of countable discrete groups on compact spaces.
\end{remark}

\begin{remark}
If $\Gamma$ is a non-exact group, then amenable actions on compact spaces don't exist and in general $C_{\ep}^*(\maG)$ is not $KK$-equivalent to $C_r^*(\maG)$, and is even expected to sometimes be $KK$-equivalent to $C_{\max}^*(\maG)$! If $\Gamma$ is an exact group, then the rectified BC conjecture reduces to the BC conjecture. In this case, there exist compact spaces with amenable actions of $\Gamma$, however we want to apply our results even when our metric finite dimensional compact Hausdorff space $X$ is not  acted on amenably. 
\end{remark}


Let us recall one of the equivalent definitions of the left hand side of the (rectified) Baum-Connes map that will be used in the sequel, which  uses the classifying space ${\underline{E}}\Gamma$ for proper $\Gamma$-actions. It will also be denoted below $R K_{i, \Gamma} ({\underline{E}}\Gamma, X)$ since it corresponds as well to compactly supported $\Gamma$-equivariant $K$-homology of $ {\underline{E}}\Gamma$ with coefficients in the $\Gamma$-algebra $C(X)$. 
Given a  proper and cocompact $\Gamma$-subspace $Z$  of $ {\underline{E}}\Gamma$, we have the corresponding Baum-Connes index map 
$$
\mu^Z_{i,\maG}: KK_\Gamma^i (Z, X) \longrightarrow KK_i (\C, C ^*(\maG)) =  K_i (C(X)\rtimes \Gamma).
$$
The classical (i.e. reduced) Baum-Connes index map for $Z$ corresponds to the reduced crossed product (so the minimal admissible one), while the rectified Baum-Connes assembly map for $Z$ is the one associated with the BGW-crossed product completion, say the minimal exact and Morita compatible one. 

 If $\iota: Z\hookrightarrow Z'$ is a $\Gamma$-equivariant inclusion of a closed subspace of the proper cocompact space $Z'$, then $\iota$ defines the functoriality class $[\iota]\in KK_\Gamma (Z', Z)$, and hence a morphism $\iota_*$ which is Kasparov product on the left (over $C_0(Z)$) by the class $[\iota]$. Notice that if we use as cutoff function on $Z$ the restriction of a cut-off function on $Z'$, then it is easy to check that $[e']\otimes_{C_0(Z')\rtimes\Gamma} J_\Gamma [\iota]= [e]$ where $e'$ is the Mishchenko idempotent for $Z'$. Therefore, for any $x\in KK^i_\Gamma (Z, X)$, we can write
$$
\mu^Z_{i,\maG} (x) = [e]\otimes_{C_0(Z)\rtimes\Gamma} J_\Gamma (x) = \left([e']\otimes_{C_0(Z')\rtimes\Gamma} J_\Gamma [\iota]\right)\otimes_{C_0(Z)\rtimes\Gamma} J_\Gamma (x).
$$
Associativity of the Kasparov product gives 
$$
\mu^Z_{i,\maG} (x)  = [e']\otimes_{C_0(Z')\rtimes\Gamma} \left(J_\Gamma [\iota] \otimes_{C_0(Z)\rtimes\Gamma} J_\Gamma (x)\right) = [e']\otimes_{C_0(Z')\rtimes\Gamma} J_\Gamma \left( [\iota]\otimes_{C_0(Z)} x\right).
$$
Therefore, we have the compatibility property
$$
\mu^Z_{i,\maG} (x) = \mu^{Z'}_{i,\maG} (\iota_*x). 
$$
The same compatibility then holds for any compatible crossed product completion. Let us denote as usual by ${\underline{E}}\Gamma$ a classifying space for proper actions of $\Gamma$. Then the above compatibility property with respect to inclusions $\iota: Z\hookrightarrow Z'$ allows to assemble all the locally compact proper and cocompact subspaces $Z$ of  ${\underline{E}}\Gamma$ and to state the following definition.

We shall denote by $RK_{i, \Gamma} ({\underline{E}}\Gamma, X)$ the inductive limit 
$$
RK_{i, \Gamma} ({\underline{E}}\Gamma, X) := \varinjlim_{Z\subset {\underline{E}}\Gamma} KK^i_\Gamma(Z,X),
$$
 where $Z$ runs over the proper cocompact $\Gamma$-subspaces of ${\underline{E}}\Gamma$, and the inductive limit is taken with respect to the system of maps $\iota_*$ corresponding to inclusions as above.

\begin{definition}\
 The rectified BC map for the groupoid $\maG=X\rtimes\Gamma$ is the resulting inductive limit morphism 
$$
\mu_{i, \maG} = \varinjlim_{Z\subset {\underline{E}}\Gamma} \mu_{i, \maG}^Z, \text{ so for instance }
\mu^{BGW}_{i, \maG}  \, : \; RK_{i, \Gamma} ({\underline{E}}\Gamma, X) \longrightarrow K_i(C(X)\rtimes_{\ep}\Gamma).
$$
\end{definition}

In the sequel we shall sometimes denote $RK_{i, \Gamma} ({\underline{E}}\Gamma, X)$ by $RK_{i, \maG} ({\underline{E}}\maG)$ when no confusion can occur. 

\medskip

\subsection{{{Independence}} of the non-degenerate representation}

To specifiy the space $Z$, we shall now  denote the Hilbert module   $\maE^{\what H}$ by $\maE_Z^{\what H}$. 
Theorem \ref{Functorial} shows that for any {(fiberwise) faithful non-degenerate} representations $\pi$ and $\pi'$ of $C_0(Z)$ in $C(X)\otimes H$ and $C(X)\otimes H'$ respectively, there exists a $\Z_2$-graded $K$-theory isomorphism
$$
\Psi^Z_{\pi, \pi'} : K_* (D^*_\Gamma (\maE_Z^{\what H})) \longrightarrow K_* (D^*_\Gamma (\maE_Z^{{\what H}'})),
$$
and this allowed us to prove that the isomorphism class of $K_* (D^*_\Gamma (\maE_Z^{\what H}))$ is  independent of the choice of the faithful non-degenerate representation $\pi$.

Let us explain now the functoriality morphism that one gets using  Theorem \ref{Functorial}. Let $\iota:Z\hookrightarrow Z'$ be a $\Gamma$-inclusion  of the proper cocompact locally compact $\Gamma$-spaces, so with $Z$ closed in $Z'$, and let $\pi:C_0(Z)\to \maL(H)$ and $\pi':C_0(Z')\to \maL(H')$ be two non-degenerate faithful representations. 
Thanks to Theorem \ref{Functorial},  for each of the spaces $Z$ and $Z'$ we have canonical isomorphism classes of the $K$-theory groups of their dual algebras. So the functoriality morphism will allow us to pass from a (canonical isomorphism class of a) $K$-group for $Z$ to a (canonical isomorphism class of a) $K$-group for $Z'$, i.e. it will be compatible with the isomorphisms  $\Psi_{\pi, \pi'}^Z$ of Theorem \ref{Functorial}. More precisely, we construct a $\Z_2$-graded $K$-theory  morphism 
$$
\iota_{\pi, \pi'}^{Z, Z'} : K_* (D^*_\Gamma (\maE_Z^{\what H})) \longrightarrow K_* (D^*_\Gamma (\maE_{Z'}^{{\what H}'})),
$$
with the allowed functoriality properties.  

Using the restriction morphism $\iota^*: C_0(Z')\to C_0(Z)$, the representation $\pi$ can be pushed forward to give the non-degenerate representation $\iota_*\pi$ in the same Hilbert space $H$, that we rather denote by $\iota_*H$. so, $(\iota_*\pi, \iota_*H)$ will be the same Hilbert space $H$ with the representation $\iota_*\pi=\pi\circ \iota^*$.  Clearly, $\iota_*\pi$  is a (non-degenerate) representation in $\maL_{C(X)} (C(X)\otimes H)$ which is no more faithfull unless $Z'=Z$. Now every operator in $D^*_\Gamma (\maE_Z^{\what H})$ is also an element of the dual algebra  $D^*_\Gamma (\maE_{Z'}^{\what {\iota_*H}})$ associated with the representation $\iota_*\pi$. Therefore, composing with  the  inclusion homomorphism 
$$
D^*_\Gamma (\maE_{Z'}^{\what {\iota_*H}}) \hookrightarrow D^*_\Gamma (\maE_{Z'}^{\what {\iota_*H}\oplus {\what H'}}) \text{ given by } T\longmapsto \left(\begin{array}{cc} T & 0 \\ 0 & 0\end{array}\right),
$$
we get a $\Z_2$-graded morphism between the $K$-theory groups:
$$
j_{\pi, \pi'}^{Z, Z'} : K_* (D^*_\Gamma (\maE_Z^{\what H})) \longrightarrow K_* (D^*_\Gamma (\maE_{Z'}^{\what {\iota_*H}\oplus {\what H'}}) ) = K_* (D^*_\Gamma (\maE_{Z'}^{\what {\iota_*H\oplus H'}})),
$$
Since $\pi'$ is a faithful non-degenerate representation of $C_0(Z')$, the direct sum representation $\iota_*\pi\oplus \pi'$ is also a faithful and non-degenerate representation. Therefore, Theorem \ref{Functorial} provides the $\Z_2$-graded group isomorphism 
$$
\Psi^{Z'}_{\iota_*\pi\oplus \pi', \pi'} : K_* (D^*_\Gamma (\maE_{Z'}^{\what {\iota_*H\oplus H'}})) \longrightarrow K_* (D^*_\Gamma (\maE_{Z'}^{{\what H}'})),
$$
which only depends on the representations $\pi$ and $\pi'$. 

\begin{definition}\
Let $\iota:Z\hookrightarrow Z'$ be a $\Gamma$-inclusion  of the proper cocompact locally compact $\Gamma$-spaces, so with $Z$ closed in $Z'$, and let $\pi:C_0(Z)\to \maL_{C(X)}(C(X)\otimes H)$ and $\pi':C_0(Z')\to \maL_{C(X)}(C(X)\otimes H')$ be two non-degenerate faithful representations. The functoriality morphism $\iota_{\pi, \pi'}^{Z, Z'}$ is the $\Z_2$-graded group morphism 
$$
\iota_{\pi, \pi'}^{Z, Z'} : K_* (D^*_\Gamma (\maE_Z^{\what H})) \longrightarrow K_* (D^*_\Gamma (\maE_{Z'}^{{\what H}'})),
$$
defined as the composite morphism 
$$
\iota_{\pi, \pi'}^{Z, Z'} := \Psi^{Z'}_{\iota_*\pi\oplus \pi', \pi'}\circ j_{\pi, \pi'}^{Z, Z'}.
$$
\end{definition}

\medskip

We deduce that, by definition and using again Theorem \ref{Functorial}, the functoriality morphism $\iota_{\pi, \pi'}^{Z, Z'}$ only depends, up to a conjugation isomorphism, on the $\Gamma$-pair $Z\hookrightarrow Z'$ of  proper cocompact $\Gamma$-spaces. Moreover, if $ Z'\hookrightarrow Z''$ is another $\Gamma$-inclusion, then one proves again that $
{\iota}_{\pi', \pi''}^{Z', Z''} \circ \iota_{\pi, \pi'}^{Z, Z'} = {\iota}_{\pi, \pi''}^{Z, Z''}$.

Notice that applying this property to the identity maps $Z\to Z$ and $Z'\to Z'$ and with two other  faithful non-degenerate representation $\pi_1: C_0(Z)\to \maL_{C(X)}(C(X)\otimes H_1)$ and $\pi'_1: C_0(Z')\to \maL_{C(X)}(C(X)\otimes H'_1)$, we also obtain the commutativity of the following square for $i\in \Z_2$
$$\xymatrix{
{K_i (D^*_\Gamma (\maE_Z^{\what H}))} \ar@{->}[r]^{\Psi_{\pi, \pi_1}^Z} \ar@{->}[d]^{\iota^{Z, Z'}_{\pi, \pi'}}
& {K_i (D^*_\Gamma (\maE_Z^{\what {H_1}}))}\ar@{->}[d]^{\iota^{Z, Z'}_{\pi_1, \pi'_1}} \\ 
{K_i(D^*_\Gamma (\maE_{Z'}^{\what {H'}}))}\ar@{->}[r]^{\Psi^{Z'}_{\pi', \pi'_1}} & {K_i(D^*_\Gamma (\maE_{Z'}^{{\what {H'_1}}}))}
}$$
%

According to the previous remarks,  we shall  denote unambiguisly $\iota_{\pi, \pi'}^{Z, Z'}$  by $\iota^{Z, Z'}$ or $\iota^{Z\subset Z'}$.  

%

\medskip

Now, again passing to the quotient $C^*$-algebras, one obtains  that the $\Gamma$-inclusion $\iota: Z\hookrightarrow Z'$, and the representations $\pi$ and $\pi'$,  yield a $\Z_2$-graded funtoriality isomorphism still denoted
$$
\iota^{Z, Z'}=\iota_{\pi, \pi'}^{Z, Z'}   : K_* (Q^*_\Gamma (\maE_Z^{\what H})) \longrightarrow K_* (Q^*_\Gamma (\maE_{Z'}^{{\what H}'})).
$$
Recall the isomorphism $\theta$ of {{Proposition \ref{equivalence2}}}. We can deduce from the very definitions of the functoriality morphisms the following

\begin{proposition}\label{QuotientCompa}\
Fix the faithful non-degenerate representations $\pi$ and $\pi'$ as above of $C_0(Z)$ and $C_0(Z')$, and denote for simplicity by $\iota_*: K_*(Q^*_\Gamma (X, Z, {\what H})) \rightarrow K_*(Q^*_\Gamma (X, Z', {\what H}'))$ the funtoriality $K$-theory morphism defined in \cite{BenameurRoyII} and also by $\iota_* : K_* (Q^*_\Gamma (\maE_Z^{\what H})) \rightarrow K_* (Q^*_\Gamma (\maE_{Z'}^{{\what H}'}))$ the above functoriality morphism. Then the following diagram commutes
$$
\xymatrix{
{K_i(Q^*_\Gamma(\maE_Z^{\what H}))} \ar@{->}[r]^{\iota_*} \ar@{->}[d]^{{\theta_*^{-1}}}
& {K_i(Q^*_\Gamma(\maE_{Z'}^{{\what H}'}))}\ar@{->}[d]^{{\theta_*^{-1}}} \\ 
{K_i(Q^*_\Gamma (X, Z, {\what H}))} \ar@{->}[r]^{\iota_*} & {{K_*(Q^*_\Gamma (X, Z', {\what H}'))}}
}
$$
\end{proposition}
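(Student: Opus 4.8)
The plan is to reduce the commutativity of the quotient square to the corresponding commutativity for the dual algebras together with the already-established naturality of the isomorphism $\theta$ of Proposition \ref{equivalence2}, and then to unwind the definition of the functoriality morphism $\iota_{\pi,\pi'}^{Z,Z'} = \Psi^{Z'}_{\iota_*\pi\oplus\pi',\pi'}\circ j_{\pi,\pi'}^{Z,Z'}$ into its two pieces. Concretely, the statement will follow once we check (a) that $\theta$ intertwines the inclusion map $j_{\pi,\pi'}^{Z,Z'}$ at the quotient level with the obvious inclusion $K_*(Q^*_\Gamma(X,Z,\what H))\to K_*(Q^*_\Gamma(X,Z',\what{\iota_*H\oplus H'}))$ used in \cite{BenameurRoyII} to build $\iota_*$, and (b) that $\theta$ intertwines $\overline{\Ad}_W$ with $\overline{\Ad}_V$, which is precisely the content of Proposition \ref{AdW}. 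Since the functoriality morphism on the $Q^*_\Gamma(\maE)$-side is by construction the composite of the map induced by $j_{\pi,\pi'}^{Z,Z'}$ followed by $\overline{\Ad}_W$, and on the $Q^*_\Gamma(X,Z,\cdot)$-side $\iota_*$ of \cite{BenameurRoyII} is the composite of the trivial inclusion followed by $\overline{\Ad}_V$, gluing the two intertwining squares along their common middle column yields the desired commutativity.

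First I would record the precise description of the functoriality morphism at the quotient level. The operator-level composite $\iota_{\pi,\pi'}^{Z,Z'}$ passes to the quotients because, as noted in the paragraph preceding the statement, $\Ad_W$ sends compacts to compacts; thus one gets $\overline{\iota}{}_{\pi,\pi'}^{Z,Z'} = \overline{\Psi}{}^{Z'}_{\iota_*\pi\oplus\pi',\pi'}\circ \overline{j}{}_{\pi,\pi'}^{Z,Z'}$ where $\overline{\Psi}{}^{Z'}_{\iota_*\pi\oplus\pi',\pi'}$ is the $K$-theory map induced by the unitary conjugation $\overline{\Ad}_W$ attached to the representations $\iota_*\pi\oplus\pi'$ and $\pi'$. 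Then I would invoke Proposition \ref{AdW} (applied with the pair $(\iota_*\pi\oplus\pi',\pi')$ in place of $(\pi,\pi')$) to replace $\theta_*\circ\overline{\Psi}{}^{Z'}_{\iota_*\pi\oplus\pi',\pi'}$ by $\overline{\Ad}_V\circ\theta_*$ on $K$-theory, where $V$ is the corresponding Pimsner--Popa--Voiculescu unitary of Theorem \ref{BRPPV}. This handles the ``$\Psi$-part'' of the functoriality morphism.

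Next I would handle the ``inclusion part'': I must show that $\theta_*\circ(\overline{j}{}_{\pi,\pi'}^{Z,Z'})_* $ equals the composite of $\theta_*$ for $Z$ with the inclusion-induced map $K_*(Q^*_\Gamma(X,Z,\what H))\to K_*(Q^*_\Gamma(X,Z',\what{\iota_*H}\oplus\what{H'}))$ that enters the definition of $\iota_*$ in \cite{BenameurRoyII}. This is essentially a bookkeeping check: by Proposition \ref{locally compact2}, $\theta$ sends the class of $T$ to the class of $T_e = \phi\circ(\pi\rtimes\Gamma)(e)(T\rtimes\Gamma)(\pi\rtimes\Gamma)(e)\circ\phi^{-1}$, and the map $j_{\pi,\pi'}^{Z,Z'}$ is induced at the operator level by viewing $T\in D^*_\Gamma(\maE_Z^{\what H})$, which is a compact perturbation of such a $T_e$, as an element of $D^*_\Gamma(\maE_{Z'}^{\what{\iota_*H}})$ (this uses that any pseudolocal finite-propagation operator for $Z$ is one for $Z'$, because $C_0(Z)$ maps into $C_0(Z')$ under $\iota^*$ and the propagation is measured with the restricted metric), and then embedding diagonally. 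Since $\theta$ is defined fiberwise by the same formula using the \emph{same} Mishchenko projection $e$ (one uses a cutoff on $Z$ that is the restriction of a cutoff on $Z'$, as in \S\ref{BC-review}), the two sides literally agree on representatives, and one only needs to observe that the diagonal embedding $T\mapsto\left(\begin{smallmatrix}T&0\\0&0\end{smallmatrix}\right)$ is compatible with $\theta$ on both sides. Composing the two intertwining squares then gives the claimed commutativity.

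The main obstacle I expect is the compatibility check in the previous paragraph, namely verifying carefully that the isomorphism $\theta$ of Proposition \ref{equivalence2} is genuinely natural with respect to the inclusion $\iota:Z\hookrightarrow Z'$ and the passage from the representation $\pi$ on $C_0(Z)$ to $\iota_*\pi$ on $C_0(Z')$; this requires tracking that the Mishchenko projection, the cutoff function, and the isomorphism $\phi$ of Lemma \ref{Smough2} are chosen coherently for $Z$ and $Z'$, and that the definition of the \cite{BenameurRoyII} functoriality map $\iota_*$ on $Q^*_\Gamma(X,Z,\what H)$ is indeed the plain inclusion followed by $\overline{\Ad}_V$ (as opposed to some other model). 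Once the conventions are aligned, the proof is the diagram-chase indicated above: ``This is straightforward using the proof of Proposition \ref{locally compact2}, together with Propositions \ref{AdW} and the definition of the functoriality morphisms.''
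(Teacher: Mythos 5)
Your proposal is correct and follows essentially the same route as the paper: both decompose the functoriality morphism into the inclusion piece $j_{\pi,\pi'}^{Z,Z'}$ and the conjugation piece, dispose of the latter by Proposition \ref{AdW}, and observe that the inclusion pieces on the two sides are matched by $\theta$ essentially by construction (via Proposition \ref{locally compact2}). Your write-up merely makes explicit the bookkeeping (coherent choice of cutoff/Mishchenko projection) that the paper's proof leaves implicit.
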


\begin{proof}
The construction of the morphism $\iota_*: K_*(Q^*_\Gamma (X, Z, {\what H})) \rightarrow K_*(Q^*_\Gamma (X, Z', {\what H}'))$ follows the same steps, and we only need to check the compatibility of the isomorphisms $\theta_*$ with the isomorphisms $\Psi_{\pi, \pi'}^{Z, Z'}$ which are defined using ${\overline{Ad_W}}$ for the algebras $Q^*_\Gamma (\maE_Z^{\what H})$ and ${\overline{Ad_V}}$ for the algebras $Q^*_\Gamma (X, Z, {\what H})$. Hence, the proof is complete using Proposition \ref{AdW} and its proof. 
\end{proof}

 We can deduce the following

\begin{corollary}\label{Paschke}\
There exist  natural  Paschke   group isomorphisms
$$
\maP^H_{i, Z}\, : \, K_i(Q^*_\Gamma(\maE_Z^{\what H}))\stackrel{\simeq}{\longrightarrow} KK^{i+1}_\Gamma(Z,X), \quad\text{ for }i\in \Z_2.
$$
Moreover, these isomorphisms are compatible with the functoriality morphisms  $\iota^{Z, Z'}$.
\end{corollary}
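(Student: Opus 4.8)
The plan is to derive Corollary~\ref{Paschke} by simply transporting the Paschke--Higson isomorphisms $\maP_i^{Z,\what H}$ of \cite{BenameurRoyII}, which are already available on the right-hand Roe algebras, across the canonical isomorphism $\theta$ of Proposition~\ref{equivalence2}. Concretely, for each $i\in\Z_2$ I would \emph{define}
$$
\maP^H_{i,Z} := \maP_{i}^{Z,\what H}\circ \theta_* : K_i(Q^*_\Gamma(\maE_Z^{\what H})) \overset{\simeq}{\longrightarrow} K_i(Q^*_\Gamma(X,Z,\what H)) \overset{\simeq}{\longrightarrow} KK^{i+1}_\Gamma(Z,X),
$$
using the convention from the ``regular HR sequence'' subsection that $\maP_0^{Z,\what H}$ lands in $KK^1_\Gamma(Z,X)$ and $\maP_1^{Z,\what H}$ in $KK^0_\Gamma(Z,X)$ (this is the source of the index shift $i\mapsto i+1$). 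Since $\theta$ is a $C^*$-algebra isomorphism by Proposition~\ref{equivalence2}, $\theta_*$ is an isomorphism on $K$-theory; and $\maP_i^{Z,\what H}$ is an isomorphism by the Paschke--Higson duality theorem of \cite{BenameurRoyII} (this is exactly where the hypothesis that $X$ is metrizable and finite dimensional, and that $\what H$ is the ample amplification, is used — item (2) in the list following the discussion of $\maP_i^{Z,H}$). Composing two isomorphisms gives an isomorphism, establishing the first assertion.

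For the naturality statement, I would combine the two commuting-square results already proven. The functoriality morphism $\iota^{Z,Z'}$ on the quotient algebras is compatible via $\theta_*$ with the functoriality morphism $\iota_*$ on $K_*(Q^*_\Gamma(X,Z,\what H))$ — this is precisely Proposition~\ref{QuotientCompa}. On the other side, the Paschke isomorphisms $\maP_i^{Z,\what H}$ of \cite{BenameurRoyII} are compatible with the functoriality morphisms $\iota_*$ on $K_*(Q^*_\Gamma(X,Z,\what H))$; this is one of the results recalled from \cite{BenameurRoyII} in the ``regular HR sequence'' subsection (the functoriality of the universal reduced sequence $\maS^r_*(\maG)$ rests on exactly this compatibility). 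Pasting the two commutative squares
$$
\xymatrix{
K_i(Q^*_\Gamma(\maE_Z^{\what H})) \ar[r]^{\iota^{Z,Z'}} \ar[d]_{\theta_*} & K_i(Q^*_\Gamma(\maE_{Z'}^{{\what H}'})) \ar[d]^{\theta_*} \\
K_i(Q^*_\Gamma(X,Z,\what H)) \ar[r]^{\iota_*} \ar[d]_{\maP_i^{Z,\what H}} & K_i(Q^*_\Gamma(X,Z',{\what H}')) \ar[d]^{\maP_i^{Z',{\what H}'}} \\
KK^{i+1}_\Gamma(Z,X) \ar[r]^{\iota_*} & KK^{i+1}_\Gamma(Z',X)
}
$$
along their common middle row yields the desired compatibility of $\maP^H_{i,Z}$ with $\iota^{Z,Z'}$, since the outer rectangle commutes and its vertical composites are by definition $\maP^H_{i,Z}$ and $\maP^{H'}_{i,Z'}$.

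The only genuine subtlety — and the step I would be most careful about — is bookkeeping of \emph{which} isomorphism class one is working with: both $K_i(Q^*_\Gamma(\maE_Z^{\what H}))$ and $KK^{i+1}_\Gamma(Z,X)$ are defined only up to canonical isomorphism (the former because of $\Psi^Z_{\pi,\pi'}$, the latter being intrinsic), so one must check that the square above is insensitive to the choice of faithful non-degenerate representations $\pi$, $\pi'$. This is handled by the commuting square displayed just before the definition of $\iota^{Z,Z'}$ (the compatibility of $\iota^{Z,Z'}_{\pi_1,\pi_1'}\circ\Psi^Z_{\pi,\pi_1}$ with $\Psi^{Z'}_{\pi',\pi_1'}\circ\iota^{Z,Z'}_{\pi,\pi'}$) together with the analogous statement for the $\maP_i$'s from \cite{BenameurRoyII}; modulo that diagram chase, everything reduces to the two ingredients above and there is no further analytic content to supply. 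The word ``natural'' in the statement is therefore to be read as: natural with respect to the system of $\Gamma$-inclusions $\iota:Z\hookrightarrow Z'$ and compatible with the canonical identifications $\Psi^Z_{\pi,\pi'}$.
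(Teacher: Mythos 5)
Your proposal is correct and follows essentially the same route as the paper: the isomorphism is defined as the composition of $\theta_*$ (from Proposition~\ref{equivalence2}) with the Paschke--Higson isomorphism of \cite{BenameurRoyII}, and the compatibility with $\iota^{Z,Z'}$ is obtained by pasting the square of Proposition~\ref{QuotientCompa} with the corresponding functoriality square for the Paschke maps recalled from \cite{BenameurRoyII}. Your additional remark on independence of the choice of faithful non-degenerate representations is a reasonable bookkeeping point that the paper leaves implicit.
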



\begin{proof}

In \cite{BenameurRoyII}, the  Paschke map is defined by the usual formula and it is proved there that it gives group isomorphisms
$$
K_i(Q^*_\Gamma(X,Z,{\what H}))\stackrel{\simeq}{\longrightarrow} KK^{i+1}_\Gamma(Z,X), \text{ for }i\in \Z_2.
$$
Hence, it suffices to define $\maP^H_{i, Z}$ by composing {{the Paschke isomorphism with the isomorphism $\theta_*^{-1}$}}. Recall on the other hand that the following diagram commutes, for  $\iota:Z\hookrightarrow Z'$ as above (see {{Proposition 5.5 in \cite{BenameurRoyII}}})
$$\xymatrix{
{K_i(Q^*_\Gamma(X,Z,{\what H}))} \ar@{->}[r]^{\iota_*} \ar@{->}[d]^{\simeq}
& {K_i(Q^*_\Gamma(X,Z',{\what H}'))}\ar@{->}[d]^{\simeq} \\ 
KK^{i+1}_\Gamma(Z,X)\ar@{->}[r]^{\iota_*} & KK^{i+1}_\Gamma(Z',X)
}$$
Propositions  \ref{QuotientCompa} then allows to conclude. More precisely, the following diagram commutes
$$\xymatrix{
{K_i(Q^*_\Gamma(\maE_Z^{\what H}))} \ar@{->}[r]^{\iota^{Z, Z'}} \ar@{->}[d]^{\maP^H_{i, Z}}
& {K_i(Q^*_\Gamma(\maE_{Z'}^{{\what H}'}))}\ar@{->}[d]^{\maP^{H'}_{i, Z'}} \\ 
KK^{i+1}_\Gamma(Z,X)\ar@{->}[r]^{\iota_*} & KK^{i+1}_\Gamma(Z',X)
}$$


\end{proof}

\medskip

\subsection{The universal  HR sequence}

 Let us get back now to the  Higson-Roe sequences and to the dual algebras associated with our groupoid $\maG$ and with any admissible crossed product functor $\rtimes$ for actions of  $\Gamma$. See again  \cite{BaumGuentnerWillett}.

 Recall that $\underline{E}\Gamma$ is a  classifying space for proper $\Gamma$-actions. Let $Z$ be a  $\Gamma$-subspace  of $\underline{E}\Gamma$ which is cocompact, then choosing a (fiberwise) faithful and non-degenerate $\Gamma$-equivariant representation $\pi_Z: C_0(Z) \rightarrow \maL_{C(X)} (C(X)\otimes H)$ in some Hilbert space $H$, we defined the $\Z_2$-graded group $K_*(D^*_\Gamma (\maE^{\what H}_Z)$, and any two choices give isomorphic groups. We also proved that the functoriality morphisms 
  $$
\iota^{Z, Z'}: K_i(D^*_\Gamma (\maE_Z^{\what {H}}))\longrightarrow K_i(D^*_\Gamma (\maE_{Z'}^{{\what {H'}}})).
$$
 corresponding to inclusions $Z\hookrightarrow Z'$ are compatible with the isomorphisms corresponding to different choices of representations. Moreover,  if $\iota':Z'\hookrightarrow Z''$ is another $\Gamma$-equivariant inclusion, then $
\iota^{Z', Z''}\circ \iota^{Z, Z'} = \iota^{Z, Z''}$.
Since no confusion can occur we drop the the superscript $\bullet^{\what {H}}$ and simply denote by $K_i(D^*_\Gamma (\maE_Z))$ the groups $K_i(D^*_\Gamma (\maE_Z^{\what {H}}))$. 
The next lemma is  standard and will be  used in the proof of Theorem \ref{ES2}.  

\begin{lemma}
\label{full}
The module $\maE_Z^{\what H}$ is a full Hilbert $C^*(\maG)$-module. In particular, the $C^*$-algebra $\maK_{C^*(\maG)}(\maE^{\what H}_Z)$ is Morita equivalent to the $C^*$-algebra $C^*(\maG)$ of the groupoid $\maG=X\rtimes\Gamma$. 
Hence, we have Morita group isomorphisms $\maM^Z: K_i(C^*(\maG)) \rightarrow K_i(\maK_{C^*(\maG)}(\maE^{\what H}_Z))$.  

Moreover, if $\iota:Z\hookrightarrow Z'$ is a $\Gamma$-inclusion as before, then  $\iota_*\circ \maM^Z = \maM^{Z'}$.
\end{lemma}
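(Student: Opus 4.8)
The plan is to first establish fullness of $\maE_Z^{\what H}$ as a Hilbert $C^*(\maG)$-module, and then read off the Morita statement from the general theory, finishing with the compatibility under inclusions. For fullness, I would work with the dense submodule $\maE_c^{\what H} = \pi(C_c(Z\times X))(C(X)\otimes \what H)$ and show that the $C_c(X\times\Gamma)$-valued inner products $\langle \xi,\eta\rangle$ span a dense subset of $C^*(\maG)$. Using the cut-off function $c$ with $\sum_{\gamma}\gamma c^2 = 1$, and the non-degeneracy and fiberwise faithfulness of $\pi$, one can produce, for a given $f\in C_c(X\times\Gamma)$, elements $\xi,\eta\in\maE_c^{\what H}$ whose inner product computed by the formula
$$
\langle\xi,\eta\rangle(x,\gamma) = \langle \xi(x), \what U_\gamma\eta(\gamma^{-1}x)\rangle
$$
approximates $f$; concretely one chooses $\xi$ supported near the cut-off and uses that $\what\pi_x$ is faithful and non-degenerate in each fiber to recover the scalar $f(x,\gamma)$. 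It is cleaner to use the alternative description from Lemma \ref{Smough2}: $\maE^{\what H}\simeq (\pi\rtimes\Gamma)(e)\left((C(X)\otimes\what H)\rtimes\Gamma\right)$, so $\maE^{\what H}$ is the image of a projection $(\pi\rtimes\Gamma)(e)$ acting on the standard module $(C(X)\otimes\what H)\rtimes\Gamma \simeq C^*(\maG)\otimes \what H$ (Remark \ref{rtimes=otimes}). The ideal generated by the inner products of $\maE^{\what H}$ is then the ideal generated by $(\pi\rtimes\Gamma)(e)$ inside $\maK_{C^*(\maG)}(C^*(\maG)\otimes\what H)\simeq C^*(\maG)\otimes\maK(\what H)$, and by Morita theory fullness of $\maE^{\what H}$ is equivalent to this ideal being everything, i.e.\ to $(\pi\rtimes\Gamma)(e)$ being a full projection. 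Since $e$ itself is a full projection in $C_0(Z\times X)\rtimes\Gamma$ (this is the standard fact that the Mishchenko projection generates, being the image of the unit of $\C$ under a $KK$-equivalence-type construction, and because $\sum_\gamma \gamma c^2=1$ means $e$ is not contained in any proper invariant ideal), and since $\pi\rtimes\Gamma$ is built from a fiberwise faithful $\Gamma$-equivariant representation, $(\pi\rtimes\Gamma)(e)$ generates the same ideal as $(\pi\rtimes\Gamma)(1)$, which acts non-degenerately; hence the ideal is all of $C^*(\maG)\otimes\maK(\what H)$.

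Once fullness is in hand, the Morita equivalence is immediate: for any full Hilbert $B$-module $\maE$, the algebra $\maK_B(\maE)$ is Morita equivalent to $B$, with $\maE$ itself serving as the imprimitivity bimodule (see \cite{Lance}). This yields the isomorphism $\maM^Z: K_i(C^*(\maG))\to K_i(\maK_{C^*(\maG)}(\maE_Z^{\what H}))$ on $K$-theory. For the last assertion, I would spell out that the inclusion $\iota:Z\hookrightarrow Z'$ induces, via the construction of $\iota^{Z,Z'}$ given above, a commuting square at the level of the imprimitivity bimodules: the pushed-forward representation $\iota_*\pi$ gives $\maE_{Z'}^{\what{\iota_*H}}$, and the inclusion $\maK_{C^*(\maG)}(\maE_Z^{\what H}) = \maK_{C^*(\maG)}(\maE_{Z'}^{\what{\iota_*H}}) \hookrightarrow \maK_{C^*(\maG)}(\maE_{Z'}^{\what{\iota_*H}\oplus\what{H'}})$ together with the identification $\Psi^{Z'}_{\iota_*\pi\oplus\pi',\pi'}$ carries the Morita bimodule for $Z$ to that for $Z'$; since all these maps restrict to the identity on the common dense copy of $C^*(\maG)$ acting on the left, $\iota_*\circ\maM^Z = \maM^{Z'}$ on $K$-theory.

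I expect the \textbf{main obstacle} to be the fullness argument, specifically pinning down that the Mishchenko projection $e$ (equivalently $(\pi\rtimes\Gamma)(e)$) is a full projection in the admissible crossed product. In the reduced case this is classical, but one must check it survives for a general admissible crossed product functor; the key point is that fullness is detectable on the dense subalgebra $C_c$, where the computation $\sum_\gamma \gamma c^2 = 1$ shows directly that the two-sided ideal generated by $e$ contains $C_c(X\times\Gamma)$, hence is dense, and this is preserved under any $C^*$-completion lying between the reduced and maximal ones. I would present this as a short explicit computation with $C_c$-level elements rather than invoking abstract $KK$-machinery.
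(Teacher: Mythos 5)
Your proposal follows essentially the same route as the paper: fullness is established by an explicit $C_c$-level computation showing that the inner products on the dense submodule $\pi(C_c(Z\times X))(C(X)\otimes \what H)$ span all of $C_c(X\times\Gamma)$, and the Morita statement plus the compatibility $\iota_*\circ\maM^Z=\maM^{Z'}$ then follow from general Hilbert-module theory and inspection of the functoriality maps. One detail worth importing from the paper's argument: rather than merely \emph{approximating} $f$, one gets exact equality $f=\sum_{\mu}\langle\xi,\eta_\mu\rangle$ by first producing a section $\pphi\in\pi(C_c(Z\times X))(C(X)\otimes H)$ with $\Vert\pphi(x)\Vert_H=1$ for every $x$ (fiberwise non-degeneracy gives this locally, and compactness of $X$ plus a partition of unity globalizes it), and then taking $\eta_\mu(x)=f(\mu^{-1}x,\mu^{-1})\,U_\mu\pphi(\mu^{-1}x)\otimes V_\mu h$; note that the cut-off function $c$ plays no role here. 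Your alternative detour through fullness of the projection $(\pi\rtimes\Gamma)(e)$ is dispensable and somewhat delicate: $(\pi\rtimes\Gamma)(e)$ is a projection in $\maL_{C^*(\maG)}$ but not in $\maK_{C^*(\maG)}$ (the Connes--Skandalis module is not finitely generated projective), so "the ideal it generates in $C^*(\maG)\otimes\maK(\what H)$" must be interpreted as the ideal generated by the compressed compacts, and fullness of the Mishchenko projection $e$ in $C_0(Z\times X)\rtimes\Gamma$ is itself a fact requiring proof; the direct computation you propose to present in the end avoids all of this and is what the paper does.
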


\begin{proof}\ 
We reproduce the standard proof in our case, see for instance \cite{ConnesSurvey} where the morphism $C^*(\maG)\to \maK_{C^*(\maG)}(\maE_Z^{\what H})$ is constructed, and also \cite{ConnesSkandalis} for the $KK$-version of the Morita quasi-trivial extensions. 
Let $f\in C_c(X\times\Gamma)$ be any continuous compactly supported function.
By using the fiberwise non-degeneracy of the representation $\pi$ plus the strong continuity of  operators from $\maL_{C(X)} (C(X)\otimes H)$, we can find $\pphi\in\pi(C_c(Z\times X))(C(X)\otimes H)$ such that
for any $x\in X$, we have $\Vert\pphi(x)\Vert_H=1$. Indeed,  for any $x\in X$ we can find a finite collection of elements   $\zeta^i\in H$ and $\psi^i\in C_c(Z)$, depending on $x$  such that  $\Vert \sum_i \pi_x(\psi^i) (\zeta^i) \Vert = 1$. Now for any $i$, the map $x'\mapsto  \pi_{x'}(\psi^i) (\zeta^i)$ is norm continuous, hence we can find a small open neighborhood  of $x$ over which  we have $\Vert \sum \pi_{x'} (\psi^i) (\zeta^i) \Vert \geq  3/4$. Compacity of $X$ allows to use  a finite partition of unity and to deduce the existence of a finite collection of functions $(\psi^\alpha)_\alpha$ in $C_c(Z\times X)$ and vectors $(\zeta^\alpha)_\alpha$ in $H$ such that for any $x\in X$,  $\Vert \sum_\alpha \pi_x(\psi^\alpha) (\zeta^\alpha) \Vert \geq 3/4$. Normalizing gives us eventually the element $\pphi$. 

 Let now  $h\in C_c(\Gamma\times\N)$ be the characteristic function of $(1_\Gamma, 0)$ and consider $\xi=\pphi\otimes h\in C(X)\otimes{\what H}$. Then clearly $\xi \in {\what \pi}(C_c(Z\times X))( C(X,{\what H}))$, and we check below  that
$$
f=\underset{\mu\in\Gamma}\sum\langle\xi,\eta_\mu\rangle\text{ where }\eta_\mu(x)=f(\mu^{-1} x,\mu^{-1})\, U_\mu\pphi(\mu^{-1}x)\otimes V_\mu h,
$$
{{where only in this proof $\mu\mapsto V_\mu$ is the tensor product of the regular representation by the identity of $\ell^2\N$}}. This will end the proof. Notice indeed that  $\eta_\mu=0$ except for a finite number of elements $\mu$ of $\Gamma$ ($f$ being compactly supported), and that each $\eta_\mu\in {\what \pi}(C_c(Z\times X))( C(X,{\what H}))$ as well. Hence,  $\langle {\what \pi}(C_c(Z\times X))( C(X,{\what H})),{\what \pi}(C_c(Z\times X))( C(X,{\what H}))\rangle$ contains $C_c(X\times\Gamma)$ and is therefore a dense subspace of $C^*(\maG)$ as allowed. Now, let us check the above relation $\underset{\mu\in\Gamma}\sum\langle\xi,\eta_\mu\rangle = f$. We compute
\begin{eqnarray*}
\sum_{\mu\in\Gamma}\langle\xi,\eta_\mu\rangle(x,\gamma) &=& 
\sum_{\mu\in\Gamma}\langle\xi(x),(U_\gamma\otimes V_\gamma)\eta_\mu(\gamma^{-1} x)\rangle_{{\what H}} \\
&=& \sum_{\mu\in\Gamma}\sum_{\beta\in\Gamma}\sum_{n\in\N}\langle
\pphi(x)h(\beta,n),f(\mu^{-1}\gamma^{-1} x,\mu^{-1})U_{\gamma\mu}\pphi(\mu^{-1}\gamma^{-1} x)(V_{\gamma\mu} h)(\beta,n) \rangle_H\\
&=& \sum_{\mu\in\Gamma}f(\mu^{-1}\gamma^{-1} x,\mu^{-1})h(\gamma\mu,0)
\langle\pphi(x),U_{ \gamma\mu}\pphi(\mu^{-1}\gamma^{-1}x)\rangle_H\\
&=&f(x,\gamma) \langle\pphi(x),\pphi(x)\rangle_H=f(x,\gamma).
\end{eqnarray*}
Therefore, $\maE_Z^{\what H}$ is a full Hilbert module as claimed. 

The module $\maE_Z^{\what H}$ being a full Hilbert $C^*(\maG)$-module, the $C^*$-algebra  $\maK_{C^*(\maG)}(\maE_Z^{\what H})$ is canonically Morita equivalent to $C^*(\maG)$. Moreover, an inspection of the Morita map $C^*(\maG)\to \maK_{C^*(\maG)}(\maE_Z^{\what H})$ which induces the Morita isomorphism $\maM^Z = K_i(C^*(\maG))\to K_i(\maK_{C^*(\maG)}(\maE_Z^{\what H}))$ shows that we have 
$$
\iota_*\circ \maM^Z = \maM^{Z'}: K_i(C^*(\maG))\longrightarrow K_i(\maK_{C^*(\maG)}(\maE_{Z'}^{{\what H}'})).
$$
Recall that the functoriality map $D^*_\Gamma (\maE_Z^{\what H})\to D^*_\Gamma (\maE_{Z'}^{{\what H}'})$ defined in the previous section sends compacts to compacts and induces a group morphism $\iota_*:K_i(\maK_{C^*(\maG)}(\maE_Z^{\what H})) \rightarrow K_i(\maK_{C^*(\maG)}(\maE_{Z'}^{{\what H}'}))$ which does not depend on the choices.

\end{proof}

\medskip

We can hence deduce the following

\begin{theorem}\label{ES2}\
Let $\iota:Z\to Z'$ be an inclusion of proper cocompact  $\Gamma$ spaces as above. Then the induced group morphism $\iota_*: K_i(D^*_\Gamma(\maE_Z))\rightarrow K_i(D^*_\Gamma(\maE_{Z'}))$ fits in a (functoriality) morphism between the  Higson-Roe six-term exact sequences associated with $Z$ and $Z'$ respectively. 
More precisely,  we have a commutative diagram for $i\in \Z_2$:
{\small{$$\xymatrix{
\dots \ar@{->}[r]^{\hspace{-0,25cm}\mu_{i,\maG}^Z}&
{K_i(C^*(\maG))}\ar@{->}[r] \ar@{->}[d]^{\id} &
K_i(D^*_\Gamma(\maE_Z))\ar@{->}[r] \ar@{->}[d]^{\iota_*} &
{KK^{i+1}_\Gamma (Z, X)} \ar@{->}[d]^{\iota_*} \ar@{->}[r]^{\mu_{i+1,\maG}^Z} &
{K_{i+1}(C^*(\maG))}\ar@{->}[r] \ar@{->}[d]^{\id} &
\dots
\\
\dots\ar@{->}[r]^{\hspace{-0,25cm}\mu_{i,\maG}^{Z'}}&
 K_i(C^*(\maG))\ar@{->}[r] &
K_i(D^*_\Gamma(\maE_{Z'}))\ar@{->}[r] &
{K_\Gamma^{i+1}(Z', X)}\ar@{->}[r]^{\mu_{i+1,\maG}^{Z'}} &
{K_{i+1}(C^*(\maG))}\ar@{->}[r] &
\dots
}$$}}
\end{theorem}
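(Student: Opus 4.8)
The plan is to assemble the commutative ladder from pieces that are already in place. The six-term Higson–Roe sequence for a cocompact proper $\Gamma$-space $Z$ is the $K$-theory long exact sequence of the short exact sequence of Definition \ref{MaximalHRsequence},
$$
0\to \maK_{C^*(\maG)}(\maE^{\what H}_Z)\hookrightarrow D^*_\Gamma(\maE^{\what H}_Z)\longrightarrow  Q^*_\Gamma(\maE^{\what H}_Z)\to 0,
$$
after identifying, via the Morita isomorphism $\maM^Z$ of Lemma \ref{full}, the $K$-theory of the ideal with $K_*(C^*(\maG))$, and via the Paschke isomorphism $\maP^{\what H}_{i,Z}$ of Corollary \ref{Paschke}, the $K$-theory of the quotient with $KK^{i+1}_\Gamma(Z,X)$; under these identifications the connecting map becomes $\mu^Z_{i,\maG}$ (this is the statement just before Theorem~\ref{ES2} in the case of a single $Z$, i.e. the theorem one paragraph above, combined with Proposition~\ref{equivalence2}). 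The functoriality morphism $\iota_*=\iota^{Z,Z'}$ on $K_*(D^*_\Gamma(\maE_Z))$ was constructed in the previous subsection, is independent of all auxiliary choices, and was shown there to induce a compatible morphism on the quotient algebras; by Lemma~\ref{full} it also carries compacts to compacts. So first I would record that $\iota^{Z,Z'}$ is a morphism of the ideal--algebra--quotient short exact sequences, hence induces a morphism of the associated six-term $K$-theory sequences — this is automatic from functoriality of the long exact sequence of a morphism of extensions.

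Next I would identify the three vertical maps in the ladder with the asserted ones. On the ideal term, Lemma~\ref{full} gives exactly $\iota_*\circ\maM^Z=\maM^{Z'}$, so after transporting along the Morita isomorphisms the vertical map becomes the identity of $K_i(C^*(\maG))$, which explains the two $\id$ columns. On the quotient term, Corollary~\ref{Paschke} states precisely that $\iota_*$ on $K_i(Q^*_\Gamma(\maE_Z^{\what H}))$ corresponds, via the Paschke isomorphisms $\maP^{\what H}_{i,Z}$, to $\iota_*$ on $KK^{i+1}_\Gamma(Z,X)$, i.e. to the Kasparov-product-by-$[\iota]$ map; that is the middle-right column. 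The remaining column, $K_i(D^*_\Gamma(\maE_Z))\to K_i(D^*_\Gamma(\maE_{Z'}))$, is $\iota_*$ itself by definition. Thus all squares of the ladder except the two involving the connecting maps commute by construction.

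Then I would check the two squares containing the boundary maps. The square relating the connecting map on the $Z$-sequence, $K_i(D^*_\Gamma(\maE_Z))\to K_{i}(\maK_{C^*(\maG)}(\maE_Z^{\what H}))$, to that on the $Z'$-sequence commutes by naturality of the connecting homomorphism for the morphism of extensions induced by $\iota^{Z,Z'}$; transporting along $\maM^Z$, $\maM^{Z'}$ (which is where Lemma~\ref{full}'s compatibility is again used) turns this into the statement that the composite $KK^{i}_\Gamma(Z,X)\xrightarrow{\mu^Z_{i,\maG}}K_i(C^*(\maG))\xrightarrow{\id}K_i(C^*(\maG))$ equals $KK^i_\Gamma(Z,X)\xrightarrow{\iota_*}KK^i_\Gamma(Z',X)\xrightarrow{\mu^{Z'}_{i,\maG}}K_i(C^*(\maG))$, i.e. the compatibility $\mu^Z_{i,\maG}=\mu^{Z'}_{i,\maG}\circ\iota_*$ of Baum–Connes index maps with inclusions, which was proved in the review of the BC assembly map (the associativity-of-Kasparov-product computation in Section~\ref{BC-review}). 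The dual square, involving the connecting map $KK^{i+1}_\Gamma(Z,X)\to K_{i+1}(C^*(\maG))$ landing in the ideal of the \emph{next} stage, is handled the same way: naturality of the boundary map together with the identification of the boundary with $\mu^Z_{i+1,\maG}$ (Proposition~\ref{equivalence2}, Corollary~\ref{Paschke}, Lemma~\ref{full}) and the same BC-compatibility identity.

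The only genuinely non-formal input is the compatibility of the \emph{identifications} with $\iota^{Z,Z'}$ — that is, that the Paschke isomorphisms and the Morita isomorphisms intertwine $\iota^{Z,Z'}$ with the standard $\iota_*$ maps on the model groups — and this has already been isolated and proved as Proposition~\ref{QuotientCompa}, Corollary~\ref{Paschke}, and Lemma~\ref{full}. So the main obstacle is purely bookkeeping: one must be careful that the connecting map of the Higson–Roe extension has been correctly identified with $\mu^Z_{i,\maG}$ \emph{as opposed to its negative or a shift}, and that the index $i$ versus $i+1$ placement in the six-term sequence matches the conventions of Section~\ref{BC-review}; once the sign and degree conventions are pinned down, the ladder commutes square by square and the proof is a one-paragraph diagram chase invoking Lemma~\ref{full}, Corollary~\ref{Paschke}, Proposition~\ref{QuotientCompa}, and the BC-compatibility identity $\mu^Z_{i,\maG}=\mu^{Z'}_{i,\maG}\circ\iota_*$.
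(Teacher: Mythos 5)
Your proposal is correct and follows essentially the same route as the paper: the paper's proof likewise writes down the six-term ladder for the morphism of extensions induced by $\iota^{Z,Z'}$, then intertwines by the Paschke isomorphisms (Corollary \ref{Paschke}) on the quotient terms and the Morita isomorphisms (Lemma \ref{full}) on the ideal terms, and finishes with the compatibility $\mu^{Z'}_{i,\maG}\circ\iota_*=\mu^{Z}_{i,\maG}$ from the review of the assembly map. Your extra remark about pinning down sign and degree conventions for the connecting map is a reasonable caution but is not an issue the paper's argument addresses differently.
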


\medskip

\begin{proof}\ 
We have constructed the functoriality morphisms $\iota_*$ between the $K$-theories of $D^*_\Gamma(\maE_Z^{\what H})$ and those of $D^*_\Gamma(\maE_{Z'}^{\what H})$ which induce morphisms, still denoted $\iota_*$, between the $K$-theories of the quotient algebras  $Q^*_\Gamma(\maE_Z^{\what H})$ and  $Q^*_\Gamma(\maE_{Z'}^{\what H})$. Therefore, we have a commutative diagram
$${\tiny{\xymatrix{
\dots \ar@{->}[r]&
K_i(\maK_{C^*(\maG)}(\maE_Z^{\what H}))\ar@{->}[r] \ar@{->}[d]^{\iota_*} &
{K_i(D^*_\Gamma(\maE_Z^{\what H}))}\ar@{->}[r] \ar@{->}[d]^{\iota_*} &
{K_i(Q^*_\Gamma(\maE_Z^{\what H}))} \ar@{->}[d]^{\iota_*} \ar@{->}[r] &
K_{i+1}(\maK_{C^*(\maG)}(\maE_Z^{\what H}))\ar@{->}[r] \ar@{->}[d]^{\iota_*} &
\dots
\\
\dots\ar@{->}[r]&
 K_i(\maK_{C^*(\maG)}(\maE_{Z'}^{{\what H}'}))\ar@{->}[r] &
{K_i(D^*_\Gamma(\maE_{Z'}^{{\what H}'}))}\ar@{->}[r] &
{K_i(Q^*_\Gamma(\maE_{Z'}^{{\what H}'}))}\ar@{->}[r] &
K_{i+1}(\maK_{C^*(\maG)}(\maE_{Z'}^{{\what H}'}))\ar@{->}[r] &
\dots}}
}$$
Using Corollary \ref{Paschke},   we may intertwine by the Paschke isomorphisms and replace $K_i(Q^*_\Gamma(\maE_Z^{\what H}))$ by $KK^{i+1}_\Gamma (Z, X)$, and similarly $K_i(Q^*_\Gamma(\maE_{Z'}^{\what H}))$ by $KK^{i+1}_\Gamma (Z', X)$. Also, Lemma \ref{full} allows to intertwine using the Morita isomorphisms, and hence replace in this diagram $K_i(\maK_{C^*(\maG)}(\maE_Z^{\what H}))$ and $K_i(\maK_{C^*(\maG)}(\maE_{Z'}^{\what H}))$ by $K_i(C^*(\maG))$ with $\iota_*$ becoming the identity map. More precisely, Lemma \ref{full} shows that we have isomorphisms between the $K$-theory groups
$$
K_i(\maK_{C^*(\maG)}(\maE_Z^{\what H})) \stackrel{\simeq}{\longrightarrow} K_i(C^*(\maG)) \stackrel{\simeq}{\longleftarrow} K_i(\maK_{C^*(\maG)}(\maE_{Z'}^{{\what H}'})), 
$$
so that  the morphism $
\iota_*: K_i(\maK_{C^*(\maG)}(\maE_Z^{\what H}))\longrightarrow K_i(\maK_{C^*(\maG)}(\maE_{Z'}^{{\what H}'}))$
coincides with the identity isomorphism of $K_i(C^*(\maG))$. 

On the other hand, 
Now, the compatibility of the Baum-Connes maps associated with $Z$ and $Z'$, reviewed in the previous subsection, say the equality
$$
\mu_{i, \maG}^{Z'} \circ \iota_* = \mu_{i, \maG}^Z \, : \; KK^i_\Gamma (Z, X) \longrightarrow K_i (C^*(\maG)). 
$$
completes the proof.

\end{proof}

\begin{definition}
The universal  structure groups associated with the transformation groupoid $\maG=X\rtimes \Gamma$ and with the admissible crossed product $\rtimes\Gamma$ is the inductive limit group 
$$
\maS_i (\Gamma, X) :=  \varinjlim_{Z\subset {\underline{E}}\Gamma} K_{i+1} (D^*_\Gamma (\maE_Z)),
$$
where the inductive limit is taken with respect to the associated directed system of morphisms $\iota^{Z, Z'}$.
\end{definition}

So, our group $\maS_i (\Gamma, X)$ is defined using the choice of the collection of faithful non-degenerate representations, however, two such choices give canonically isomorphic groups.
Gathering the previous compatibility results we can state the main result of the present paper, i.e.  we obtain the  six-term exact sequence for the transformation groupoid $X\rtimes\Gamma$.

\begin{theorem}\label{HR-sequence}\
There exists a    Higson-Roe exact sequence associated with the transformation groupoid $\maG=X\rtimes\Gamma$:
$$\xymatrix{
RK_{0, \Gamma} ({\underline{E}}\Gamma, X) \ar@{->}[r]^{\mu_{0, \maG}\hspace{0,3cm}}\ar@{<-}[d] &
K_0(C(X)\rtimes\Gamma)\ar@{->}[r]&
\maS_1(\Gamma, X)\ar@{->}[d]\\
\maS_0(\Gamma, X)\ar@{<-}[r]&
K_1(C(X)\rtimes\Gamma)\ar@{<-}[r]^{\hspace{0,3cm}\mu_{1, \maG}}&
RK_{1, \Gamma} ({\underline{E}}\Gamma, X) 
}$$
\end{theorem}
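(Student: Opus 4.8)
The plan is to obtain Theorem \ref{HR-sequence} by passing to the inductive limit over the cocompact $\Gamma$-subspaces $Z\subset\underline{E}\Gamma$ in the functoriality diagram of Theorem \ref{ES2}. First I would recall that exactness of $K$-theory with respect to filtered colimits of $C^*$-algebras (equivalently, the fact that $\varinjlim$ of directed systems of six-term exact sequences of abelian groups is again exact) allows one to take the colimit termwise in the commutative ladder produced by Theorem \ref{ES2}. Concretely, for each inclusion $\iota:Z\hookrightarrow Z'$ of proper cocompact $\Gamma$-subspaces of $\underline{E}\Gamma$ we have, by Theorem \ref{ES2}, a morphism of the six-term exact sequences
$$
\cdots\to K_i(C^*(\maG))\to K_i(D^*_\Gamma(\maE_Z))\to KK^{i+1}_\Gamma(Z,X)\xrightarrow{\mu^Z_{i+1,\maG}} K_{i+1}(C^*(\maG))\to\cdots
$$
compatible with the transition maps $\iota^{Z,Z'}$ on the dual-algebra $K$-groups, the maps $\iota_*$ on $KK^{*}_\Gamma(\,\cdot\,,X)$, and the identity on $K_*(C^*(\maG))$. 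These data form a directed system indexed by the (directed) poset of cocompact $\Gamma$-subspaces of $\underline{E}\Gamma$, using the associativity relation $\iota^{Z',Z''}\circ\iota^{Z,Z'}=\iota^{Z,Z''}$ recorded after the definition of the functoriality morphism, together with the analogous relation for $\iota_*$ on $KK$-theory and the compatibility $\mu^{Z'}_{i,\maG}\circ\iota_*=\mu^Z_{i,\maG}$ reviewed in \S\ref{BC-review}.

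Next I would simply define, as in the statement preceding the theorem,
$$
\maS_i(\Gamma,X):=\varinjlim_{Z\subset\underline{E}\Gamma} K_{i+1}(D^*_\Gamma(\maE_Z)),
$$
and recall that by definition $RK_{i,\Gamma}(\underline{E}\Gamma,X)=\varinjlim_Z KK^i_\Gamma(Z,X)$ and $\mu_{i,\maG}=\varinjlim_Z\mu^Z_{i,\maG}$. Since $K_*(C^*(\maG))$ is constant in the system and the transition maps on it are identities, the colimit of the ladder over $Z$ yields the six-term exact sequence
$$
\xymatrix{
RK_{0,\Gamma}(\underline{E}\Gamma,X)\ar[r]^{\mu_{0,\maG}\ \ }\ar@{<-}[d] & K_0(C(X)\rtimes\Gamma)\ar[r] & \maS_1(\Gamma,X)\ar[d]\\
\maS_0(\Gamma,X)\ar@{<-}[r] & K_1(C(X)\rtimes\Gamma)\ar@{<-}[r]^{\ \ \mu_{1,\maG}} & RK_{1,\Gamma}(\underline{E}\Gamma,X)
}
$$
Here I would also use Corollary \ref{Paschke} to identify $\varinjlim_Z K_i(Q^*_\Gamma(\maE_Z))$ with $RK_{i+1,\Gamma}(\underline{E}\Gamma,X)$ compatibly with the transition maps (the Paschke isomorphisms $\maP^H_{i,Z}$ commute with $\iota^{Z,Z'}$ and $\iota_*$), and Lemma \ref{full} to identify the $K$-theory of the ideal of compacts with $K_*(C^*(\maG))$ compatibly with $\iota_*$ and hence turn the colimit of those terms into the constant group $K_*(C(X)\rtimes\Gamma)$. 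Exactness of the colimit sequence is then immediate from exactness of each ladder and exactness of filtered colimits.

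The step I expect to require the most care is not the homological-algebra passage to the limit — that is formal once all the compatibilities are in place — but rather verifying that the directed system is genuinely well-posed: one must check that the cocompact $\Gamma$-subspaces of $\underline{E}\Gamma$ form a directed set (any two are contained in a common cocompact $\Gamma$-subspace, which uses cocompactness and the contractibility/universal property of $\underline{E}\Gamma$, together with the finite-dimensionality hypothesis on $X$ needed to invoke Theorem \ref{BRPPV} and hence Theorem \ref{Functorial} and Corollary \ref{Paschke}), and that the three families of transition maps are strictly compatible with one another across the full ladder, not just in each column separately. In particular I would check that the square relating $\iota^{Z,Z'}$ on $K_{*}(D^*_\Gamma(\maE_Z))$ to $\iota_*$ on $K_{*}(Q^*_\Gamma(\maE_Z))$ commutes (this is the content of the commutative ladder in the proof of Theorem \ref{ES2}, obtained by intertwining with the Paschke and Morita isomorphisms), so that the boundary maps of the six-term sequences are compatible in the limit. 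Once these bookkeeping points are settled, the theorem follows by applying $\varinjlim_{Z\subset\underline{E}\Gamma}$ to the conclusion of Theorem \ref{ES2}.
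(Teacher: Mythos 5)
Your proposal is correct and follows essentially the same route as the paper: apply Theorem \ref{ES2} to get a directed system of six-term exact sequences with identity transition maps on the $K_*(C^*(\maG))$ terms, and use exactness of filtered colimits of abelian groups to pass to the limit, identifying the limit terms with $\maS_i(\Gamma,X)$, $RK_{i,\Gamma}(\underline{E}\Gamma,X)$ and $\mu_{i,\maG}$ by definition. The bookkeeping points you flag (directedness of the system of cocompact $\Gamma$-subspaces and strict compatibility of the three families of transition maps across the ladder) are exactly the compatibilities established in Theorem \ref{ES2}, Corollary \ref{Paschke} and Lemma \ref{full}, which the paper's own proof invokes in the same way.
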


\begin{proof}\
We apply  Theorem \ref{ES2}, and notice that the exactness of the six-term exact sequences of that theorem for  given $Z$ and $Z'$, together with the compatibility with respect to the inclusion $\iota:Z\hookrightarrow Z'$ as stated there, allows to deduce that all the maps in the universal sequence of Theorem \ref{HR-sequence} are well defined by passing to the direct limits. Moreover, it is easy to check that exactness is preserved, and the proof is now complete.
\end{proof}

When  we use the BGW crossed product then we shall denote the structure group by $\maS^{BGW}_i (\Gamma, X)$ or also by $\maS^{BGW}_i (\maG)$. The similar notation is used for the maximal and reduced groups, say respectively $\maS^{\max}_i (\Gamma, X)$ and $\maS^{r}_i (\Gamma, X)$. An immediate corollary of Theorem \ref{HR-sequence} is 

\begin{corollary}
The group $\Gamma$ satisfies the  rectified Baum-Connes conjecture with coefficients in $C(X)$ if and only if the structure groups $\maS^{BGW}_i(X\rtimes\Gamma)$ are trivial. 
\end{corollary}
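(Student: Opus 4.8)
The plan is to deduce the corollary directly from the universal six-term exact sequence of Theorem \ref{HR-sequence} applied to the BGW crossed product $\rtimes_\ep\Gamma$, using nothing more than exactness of a long exact sequence together with the definition of an isomorphism. First I would write out the periodic six-term sequence of Theorem \ref{HR-sequence} specialised to $C^*(\maG)=C^*_\ep(\maG)$, so that the horizontal maps on the left are precisely the rectified Baum-Connes maps $\mu^{BGW}_{i,\maG}: RK_{i,\Gamma}(\underline{E}\Gamma,X)\to K_i(C^*_\ep(\maG))$, and the structure groups appearing are $\maS^{BGW}_i(\maG)=\maS^{BGW}_i(\Gamma,X)$. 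Unwinding the cyclic six-term diagram, exactness at $K_i(C^*_\ep(\maG))$ and at $RK_{i,\Gamma}(\underline{E}\Gamma,X)$ gives, for each $i\in\Z_2$, a short exact sequence
$$
0\to \Coker\bigl(\mu^{BGW}_{i+1,\maG}\bigr)\to \maS^{BGW}_i(\Gamma,X)\to \Ker\bigl(\mu^{BGW}_{i,\maG}\bigr)\to 0,
$$
where the first map is induced by $K_{i+1}(C^*_\ep(\maG))\to \maS^{BGW}_i(\Gamma,X)$ and the second by $\maS^{BGW}_i(\Gamma,X)\to RK_{i,\Gamma}(\underline{E}\Gamma,X)$. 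This is the standard reorganisation of a long exact sequence into short exact pieces around the connecting maps, and I would spell it out just enough to make the extraction transparent.

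Granting this, the equivalence is immediate in both directions. If the rectified Baum-Connes conjecture holds, then by definition both $\mu^{BGW}_{0,\maG}$ and $\mu^{BGW}_{1,\maG}$ are isomorphisms, hence each $\Ker(\mu^{BGW}_{i,\maG})=0$ and each $\Coker(\mu^{BGW}_{i,\maG})=0$, so the displayed short exact sequence forces $\maS^{BGW}_i(\Gamma,X)=0$ for $i=0,1$. Conversely, if $\maS^{BGW}_i(\Gamma,X)=0$ for both $i$, then the short exact sequence above forces $\Coker(\mu^{BGW}_{i+1,\maG})=0$ and $\Ker(\mu^{BGW}_{i,\maG})=0$ for each $i$; letting $i$ run over $\Z_2$ we get that $\mu^{BGW}_{0,\maG}$ and $\mu^{BGW}_{1,\maG}$ are both injective and surjective, i.e. isomorphisms, which is exactly the rectified Baum-Connes conjecture for $\Gamma$ with coefficients in $C(X)$ as stated in the Conjecture above. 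I would phrase this cleanly as: vanishing of $\maS^{BGW}_*(\Gamma,X)$ is equivalent to simultaneous injectivity (from the kernel factor) and surjectivity (from the cokernel factor) of the $\mu^{BGW}_{i,\maG}$.

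There is essentially no hard analytic step here; the only thing to be careful about is bookkeeping with the cyclic indices in the six-term sequence and making sure the identification of the left-hand horizontal maps with $\mu^{BGW}_{i,\maG}$ is the one supplied by Theorem \ref{HR-sequence} (which is how the sequence was built, via Theorem \ref{ES2} and the compatibility of the Baum-Connes maps under the maps $\iota_*$, then passing to the inductive limit over $Z\subset\underline{E}\Gamma$). So the ``main obstacle'' is purely cosmetic: arranging the argument so that the reader sees at once that $\maS^{BGW}_i$ sits between a cokernel and a kernel of the assembly maps. I would therefore keep the proof short, cite Theorem \ref{HR-sequence}, extract the short exact sequences, and conclude.

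\begin{proof}
Apply Theorem \ref{HR-sequence} with the BGW admissible crossed product $\rtimes_{\ep}\Gamma$, so that the $C^*$-algebra of the groupoid is $C^*_{\ep}(\maG)=C(X)\rtimes_{\ep}\Gamma$, the horizontal left-hand maps are the rectified Baum-Connes maps $\mu^{BGW}_{i,\maG}: RK_{i,\Gamma}(\underline{E}\Gamma,X)\to K_i(C^*_{\ep}(\maG))$, and the structure groups are $\maS^{BGW}_i(\maG)=\maS^{BGW}_i(\Gamma,X)$. Reading off exactness of this periodic six-term sequence at $K_{i}(C^*_{\ep}(\maG))$ and at $RK_{i,\Gamma}(\underline{E}\Gamma,X)$ yields, for each $i\in\Z_2$, a short exact sequence
$$
0\longrightarrow \Coker\bigl(\mu^{BGW}_{i+1,\maG}\bigr)\longrightarrow \maS^{BGW}_i(\Gamma,X)\longrightarrow \Ker\bigl(\mu^{BGW}_{i,\maG}\bigr)\longrightarrow 0 .
$$

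If $\Gamma$ satisfies the rectified Baum-Connes conjecture with coefficients in $C(X)$, then $\mu^{BGW}_{0,\maG}$ and $\mu^{BGW}_{1,\maG}$ are isomorphisms, hence all the kernels and cokernels above vanish, and therefore $\maS^{BGW}_i(\Gamma,X)=0$ for $i=0,1$.

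Conversely, suppose $\maS^{BGW}_i(\Gamma,X)=0$ for $i=0,1$. From the displayed short exact sequence, $\maS^{BGW}_i(\Gamma,X)=0$ forces $\Coker\bigl(\mu^{BGW}_{i+1,\maG}\bigr)=0$ and $\Ker\bigl(\mu^{BGW}_{i,\maG}\bigr)=0$. Letting $i$ range over $\Z_2$, we conclude that $\mu^{BGW}_{0,\maG}$ and $\mu^{BGW}_{1,\maG}$ are both injective and surjective, i.e. isomorphisms. This is precisely the rectified Baum-Connes conjecture for $\Gamma$ with coefficients in $C(X)$.
\end{proof}
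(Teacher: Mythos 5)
Your proof is correct and follows exactly the route the paper intends: the paper states the corollary as an immediate consequence of Theorem \ref{HR-sequence} without writing out details, and your extraction of the short exact sequences $0\to \Coker(\mu^{BGW}_{i+1,\maG})\to \maS^{BGW}_i(\Gamma,X)\to \Ker(\mu^{BGW}_{i,\maG})\to 0$ from the six-term sequence is the standard bookkeeping that makes this "immediacy" explicit, with the cyclic indices handled correctly.
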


Therefore, we expect that any invariants that could be extracted from the group $\maS^{BGW}_i(X\rtimes\Gamma)$ through a group homomorphism $\maS^{BGW}_i(X\rtimes\Gamma)\rightarrow \Lambda$ would be trivial.

\medskip

\section{Further comments}


It is easy to prove that the exact sequence of Theorem \ref{HR-sequence} is  contravariant  with respect to continuous $\Gamma$-maps $X\to X'$ between compact metric finite dimensional spaces. In particular, we have a functoriality morphism. 
$$
\maS_i (\Gamma) \longrightarrow \maS_i (\Gamma, X),
$$
which fits in a morphism of exact sequences. More precisely,  $C^*(\Gamma)$ is a $C^*$-subalgebra of $C^*(\maG)$ so that any Hilbert module $\maE$ over $C^*(\Gamma)$ gives rise to the Hilbert module $\maE\otimes_{C^*(\Gamma)} C^*(\maG)$ over $C^*(\maG)$. The induced $C^*$-algebra morphism $T\mapsto T\otimes_{C^*(\Gamma)} \id$ between the adjointable operators, verifies all the needed axioms to deduce  the above functoriality morphism.
This is compatible with the group morphism $K_i(C^*(\Gamma))\to K_i(C^*(\maG))$ induced  by the inclusion. This latter in turn is compatible with the Baum-Connes map and the  morphism $RK_{i, \Gamma} ({\underline{E}}\Gamma) \to RK_{i, \Gamma} ({\underline{E}}\Gamma, X)$ induced by the $\Gamma$-map $X\to \{\bullet\}$. Therefore, for any admissible completion, we end up with a morphism between the  Higson-Roe sequence for $\Gamma$ and the Higson-Roe sequence for $\maG=X\rtimes\Gamma$.

Another comment applies to the stabilizers of the $\Gamma$-action on $X$. If we fix some element $x$ of $X$ and  consider the  representation $\lambda_x$ of $C_r^*(\maG)$ in $\ell^2(\Gamma)$  defined in Section \ref{prelim} which can also be seen as a representation of any admissible completion $C^*(\maG)$, then we obtain a $C^*$-algebra morphism from  $C^*(\maG)$ to  $C^*_r(\Gamma (x))\otimes \maK (\ell^2(\Gamma/\Gamma(x)))$, where $C^*_r(\Gamma (x))$ is the reduced $C^*$-algebra of the stabilizer group $\Gamma (x)$. It is a straightforward exercise to check that this allows to deduce a morphism between the  Higson-Roe sequence for $\maG$ and the reduced Higson-Roe sequence for $\Gamma (x)$.


Next,  when $\maG$ is a torsion-free groupoid, meaning that  the stabilizers of the action of $\Gamma$ on $X$ are torsion free,  then any proper action of $\maG$ is a free  action and one can take for ${\underline{E}}\maG$ a usual classifying space $E\maG$, so that the quotient $B\maG=E\maG/\maG$ is the classifying space (unique up to homotopy) for $\maG$-principal bundles. Then, we get the following simplified version of our Theorem \ref{HR-sequence} in the torsion-free case:

\begin{corollary}\label{Torsion-free}
Assume that $\maG$ is  torsion free, then there exists for any admissible crossed product,  a  corresponding  Higson-Roe exact sequence:
$$\xymatrix{
K_{0} (B\maG) \ar@{->}[r]^{\mu_{0, \maG}\hspace{0,3cm}}\ar@{<-}[d] &
K_0(C(X)\rtimes\Gamma)\ar@{->}[r]&
\maS_1(\Gamma, X)\ar@{->}[d]\\
\maS_0(\Gamma, X)\ar@{<-}[r]&
K_1(C(X)\rtimes\Gamma)\ar@{<-}[r]^{\hspace{0,3cm}\mu_{1, \maG}}&
K_{1} (B\maG) 
}$$
\end{corollary}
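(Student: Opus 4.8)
The plan is to derive Corollary~\ref{Torsion-free} from Theorem~\ref{HR-sequence} by identifying the left-hand terms of the universal sequence with the compactly supported $K$-homology of $B\maG$. First I would observe that since $\maG=X\rtimes\Gamma$ is torsion-free, every isotropy group $\Gamma(x)$, $x\in X$, is torsion-free, so any finite subgroup $F\le\Gamma$ meets each $\Gamma(x)$ trivially; hence, for any model $\underline E\Gamma$ of the classifying space of proper $\Gamma$-actions, the space $\underline E\Gamma\times X$ with the diagonal $\Gamma$-action and anchor map the second projection is a \emph{free} proper $\maG$-space, i.e.\ a model for $E\maG$, and $B\maG=(\underline E\Gamma\times X)/\Gamma$ is the CW-complex appearing in the statement. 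It then suffices to produce a natural isomorphism $RK_{i,\Gamma}(\underline E\Gamma,X)\cong K_i(B\maG)$ intertwining $\mu_{i,\maG}$ with the Baum-Connes index map of $B\maG$: substituting these isomorphic outer terms in the sequence of Theorem~\ref{HR-sequence} then yields the displayed sequence, and exactness is preserved since only isomorphisms commuting with all the maps are used.

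To build this isomorphism I would work at the level of the inductive systems defining $RK_{i,\Gamma}(\underline E\Gamma,X)$. For a $\Gamma$-cocompact $\Gamma$-subspace $Z\subseteq\underline E\Gamma$ set $W_Z:=Z\times X$, a $\maG$-cocompact free proper closed $\maG$-subspace of $E\maG$ with $W_Z/\maG\cong Z\times_\Gamma X$ a compact subset of $B\maG$; as $Z$ varies these compacta exhaust $B\maG$. Via the standard equivalence between $\Gamma$-equivariant $C(X)$-algebras and $\maG$-algebras, $KK^i_\Gamma(Z,X)$ is the group $KK^i_\maG(C_0(W_Z),C(X))$ computing the $\maG$-equivariant $K$-homology of $W_Z$, and since the $\maG$-action on $W_Z$ is free and proper this group is canonically isomorphic to the ordinary $K$-homology $K_i(W_Z/\maG)$ of the compact quotient — the classical descent-to-the-quotient statement, already used in the reduced case in \cite{BenameurRoyII} (see also \cite{BaumConnesHigson}). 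An inclusion $Z\hookrightarrow Z'$ induces $W_Z\hookrightarrow W_{Z'}$ and $W_Z/\maG\hookrightarrow W_{Z'}/\maG$, so these isomorphisms carry $\iota_*$ to the inclusion-induced map on quotients; and, using the description of $\mu^Z_{i,\maG}$ as $[e]\otimes J_\Gamma(\cdot)$ together with the compatibility $\mu^Z_{i,\maG}=\mu^{Z'}_{i,\maG}\circ\iota_*$ recorded in \S\ref{BC-review}, they intertwine each $\mu^Z_{i,\maG}$ with the index map $K_i(W_Z/\maG)\to K_i(C(X)\rtimes\Gamma)$. Passing to the inductive limit gives $RK_{i,\Gamma}(\underline E\Gamma,X)=\varinjlim_Z KK^i_\Gamma(Z,X)\cong\varinjlim_Z K_i(W_Z/\maG)=K_i(B\maG)$, compatibly with $\mu_{i,\maG}$, and the corollary follows.

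The step I expect to be the main obstacle is checking the naturality of the identification $KK^i_\maG(C_0(W_Z),C(X))\cong K_i(W_Z/\maG)$: one has to verify that the underlying descent/Morita isomorphism is functorial for $\maG$-equivariant closed inclusions of free proper $\maG$-spaces and that it transports the index class $[e]\otimes J_\Gamma(\cdot)$ to the usual index pairing on the quotient. This is routine but should be made explicit; I would also stress that, unlike $C^*(\maG)$, both the left-hand terms and the classifying-space model are independent of the chosen admissible crossed product, so the argument applies verbatim for every admissible completion, exactly as it does in \cite{BenameurRoyII} for the reduced one.
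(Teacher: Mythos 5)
Your proposal is correct and follows essentially the same route as the paper: since the isotropy groups are torsion-free, every proper $\maG$-action is free, $\underline{E}\Gamma\times X$ serves as a model for $E\maG$ with $B\maG=(\underline{E}\Gamma\times X)/\Gamma$, and the descent isomorphisms $KK^i_\Gamma(Z,X)\cong K_i(Z\times_\Gamma X)$ for the free proper cocompact pieces pass to the inductive limit to replace $RK_{i,\Gamma}(\underline{E}\Gamma,X)$ by $K_i(B\maG)$ in Theorem~\ref{HR-sequence}. The paper treats this as immediate from the preceding discussion; your extra care about the naturality of the descent isomorphism and its compatibility with the assembly maps is exactly the point being taken for granted there.
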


We have denoted here $K_i(B\maG)$ the compactly supported $K$-homology of $B\maG$. Corollary \ref{Torsion-free} is in particular true when the group $\Gamma$  itself is torsion-free. In this case, the exact sequences corresponding to the specific space $X=\{\bullet\}$ for the reduced crossed products, were introduced and studied by  Higson-Roe  in \cite{HigsonRoeAnalysis1, HigsonRoeAnalysis2, HigsonRoeAnalysis3} while the maximal one was introduced and studied in \cite{HigsonRoe2008}. Hence, since in this cas,
we can take $B\maG$ to be $E\Gamma\times_\Gamma X$ where $E\Gamma$ is any contractible free and proper  $\Gamma$-space, there is a well defined projection map $B\maG\to B\Gamma=E\Gamma/\Gamma$.

Back to the general case, notice that when $\Gamma$ acts amenably on $X$, then  all Baum-Connes assembly maps are the same and are known to be isomorphisms by \cite{HigsonKasparov}. Therefore, in this case the universal structure groups $\maS_i(\Gamma, X)$ vanish while $\maS_i (\Gamma)$ might be non-trivial. Intereseting examples arise with residually finite exact groups which admit amenable actions on metric finite dimensional compact spaces. 
Also, for  some hyperbolic groups, the  exact sequence applies to the functorial action on their Gromov boundary $X=\partial \Gamma$ which is then metric and finite dimensional. An interesting case occurs for fundamental groups of hyperbolic manifolds, where the boundary is a sphere, the group is torsion-free  and the action is amenable, hence here the assembly map for $\maG=\partial\Gamma\rtimes\Gamma$ is known to be an isomorphism for all admissible completions. In this case, there is of course no invariant measure, and any invariants that one can construct in connection with rho invariants as pairings with the analytical structure group are automatically trivial, compare with \cite{HigsonRoe2008}. 
For residually finite non-exact groups $\Gamma$ (now we know such groups exist!), that $X\rtimes \Gamma$ satisfies  the (rectified)  Baum-Connes conjecture is still a reasonnable assumption, see \cite{HigsonLafforgueSkandalis} and \cite{BaumGuentnerWillett}.  These considerations allow in principle to deduce rigidity results for APS rho invariants as well as Cheeger-Gromov rho invariants for $\Gamma$-equivariant families of Dirac-type operators,  see \cite{HigsonRoe2008} and \cite{BenameurRoyJFA} for the case $X=\{\bullet\}$. 

Finally, let us explain the link between our constructions and  the wrong way functoriality maps  defined by Hilsum and Skandalis for foliations in \cite{HilsumSkandalis} and recently extended  in \cite{Zenobi} to exact sequences associated with adiabatic extension of pseudodifferential operators on smooth manifolds. According to \cite{HilsumSkandalis}, we need to use the maximal crossed product completions $\rtimes_{\max}\Gamma$. If we assume that $X$ is a smooth closed even dimensional oriented manifold and that $\Gamma$ acts by smooth orientation preserving diffeomorphisms, then for any smooth closed manifold $M$ with $\pi_1(M)=\Gamma$, we may consider the usual suspension of this action, say the foliated bundle $V=\tM\times_\Gamma X\to M$ where $\tM$ is the universal cover of $M$, with the  leaves being given by the projections of the submanifolds $\tM\times \{x\}$, so are transverse to the fibers. If $\Gamma_0 =\pi_1(V)$ then there exists a normal subgroup $\Gamma'$ of $\Gamma$ such that $\Gamma_0/\Gamma' \simeq \Gamma$. The monodromy groupoid of the foliation of $V$ is then Morita equivalent to our groupoid $\maG=X\rtimes \Gamma$, while the monodromy groupoid of the trivial foliation of $V$ with one leaf is Morita equivalent to $\Gamma_0$. According to \cite{HilsumSkandalis}, which can easily be extended to monodromy as opposed to  holonomy, we have a well defined Hilsum-Skandalis class  in $KK (C(X)\rtimes_{\max} \Gamma, C_{\max}^*(\Gamma_0))$ representing  Connes' transverse fundamental class in $K$-theory and precisely well defined since we are using the maximal completion $C(X)\rtimes_{\max} \Gamma$. Using the functoriality morphism $C_{\max}^*(\Gamma_0)\to C_{\max}^*(\Gamma)$ we can pushforward this class to get a class, denoted $[V/F]$ in $KK (C(X)\rtimes_{\max} \Gamma, C_{\max}^*(\Gamma))$. 
Our constructions allow, in principle,   to similarly construct   a group morphism at the level of the $K$-theories of the dual $D^*$-algebras associated with $\tM$. Hence,  one would be able to  maps the maximal Higson-Roe sequence for $X\rtimes \Gamma$ acting on $\tM\times X$ to the maximal Higson-Roe sequence for the group $\Gamma$ acting on $\tM$.

More explicit applications in relation with the rigidity problems of rho-invariants will be investigated in the forthcoming paper \cite{BenameurPreprint}.

\vspace{1\baselineskip}

\vspace{1\baselineskip}

\bibliographystyle{alpha}

\end{document}